\newtheorem{thm}{Theorem}[section]
\newtheorem{cor}[thm]{Corollary}
\newtheorem{lem}[thm]{Lemma}
\newtheorem{prop}[thm]{Proposition}
\newtheorem{cl}[thm]{Claim}
\theoremstyle{definition}
\newtheorem{dfn}[thm]{Definition}
\newtheorem*{dfn*}{Definition}
\newtheorem{nota}[thm]{Notation}
\newtheorem{eg}[thm]{Example}
\theoremstyle{remark}
\newtheorem{rmk}[thm]{Remark}
\newtheorem{egg}[thm]{Example}
\newcommand{\op}{\mathrm}
\numberwithin{equation}{subsection}
\def\widebreve{\mathpalette\wide@breve}
\def\wide@breve#1#2{\sbox\z@{$#1#2$}%
     \mathop{\vbox{\m@th\ialign{##\crcr
\kern0.08em\brevefill#1{0.8\wd\z@}\crcr\noalign{\nointerlineskip}%
                    $\hss#1#2\hss$\crcr}}}\limits}
\def\brevefill#1#2{$\m@th\sbox\tw@{$#1($}%
  \hss\resizebox{#2}{\wd\tw@}{\rotatebox[origin=c]{90}{\upshape(}}\hss$}
\begin{document}    
\newcounter{flag}

\title[Derived moduli stacks of Harder-Narasimhan filtrations]{An explicit construction of derived moduli stacks of Harder-Narasimhan filtrations}
\author[Yuki mizuno]{Yuki Mizuno}
\email{m7d5932a72xxgxo@fuji.waseda.jp}
\date{}
\address{Department~of~Mathematics, School~of~Science~and~Engineering, Waseda~University, Ohkubo~3-4-1, Shinjuku, Tokyo~169-8555, Japan}

\keywords{Moduli spaces of sheaves, Derived algebraic geometry, Harder-Narasimhan filtration}
\subjclass[2020]{14A30, 14D20, 14D23}

\maketitle

\begin{abstract}
%\footnote[0]{Keywords. Moduli spaces of sheaves, Derived algebraic geometry, Harder-Narasimhan filtration}
%\footnote[0]{2020 Mathematics Subject Classification. 14A30, 14D20, 14D23}
In this article, we give an explicit construction of the derived moduli stack of Harder-Narasimhan filtrations on a smooth connected projective scheme over an algebraically closed field $k$ of characteristic $0$ by using the methods in \cite{behrend2014}. 
Moreover, we describe the derived deformation theory of a filtered sheave on a connected projective scheme over $k$ and compare our construction with the construction by Di Natale in \cite{di2017derived}.
\end{abstract}

%\tableofcontents

%%%%%\input{2mizuno1_intro}

\section{Introduction}
Moduli spaces of sheaves are constructed by using the methods of geometric invariant theory. 
In the constructions, they are constructed as quotients of open subschemes of Quot schemes that parametrize semistable sheaves.
The moduli schemes parametrize only semistable sheaves.
Unstable sheaves have unique filtrations described by semistable sheaves, which are called Harder-Narasimhan(HN) filtrations.
Although moduli spaces parametrizing unstable sheaves are not constructed as schemes, but as stacks. 
Moreover, moduli stacks of HN filtrations with fixed HN types are constructed as quotient stacks of open subschemes of relative flag schemes (for example, see \cite{yoshioka2009fourier}).

On the other hand, recent developments of derived algebraic geometry are remarkable.
As you can see in \cite{pantev2013shifted}, the theory of derived algebraic geometry provides a suitable framework for studying symplectic geometry of moduli stacks.
The authors proved that the derived moduli stack of perfect complexes on a smooth proper Calabi-Yau variety admits a shifted symplectic structure.

We are interested in  derived moduli stacks of HN-filtrations.
In particular, the aim of this paper is to give explicit constructions of derived moduli stacks of HN-filtrations as quotient derives stacks.
In the case of stable sheaves, explicit constructions of derived moduli spaces are studied in \cite{behrend2014} or \cite{borisov2020global}.
Moreover, explicit construction of derived Quot and Hilbert schemes are studied in \cite{ciocan2001} and \cite{ciocan2002derived}, respectively.

We use the methods in \cite{behrend2014}.
In the work, the authors explicitly describe the moduli space of semistable sheaves on a projective variety $X$ as an open substack of the stack of truncated graded $A$-modules, where $A := \oplus_{i \geq 0}\Gamma(X, \mathscr{O}_{X}(i))$. 
In detail, they use a functor from the category of coherent sheaves on $X$ to the category of $[p,q]$-graded $A$-modules defined by $ \Gamma_{[p,q]}(\mathscr{F}) := \oplus^q_{i=p} \Gamma(X, \mathscr{F}(i))$, where $\mathscr{F}$ is a coherent sheaf on $X$ and $p,q$ are nonnegative integers.
Then, a derived enhancement of the moduli of sheaves is constructed by using the structure of the derived moduli of graded $A$-modules from Hochschild cohomology. 
However, we need to have some obstacles in order to use their methods.
We need more detailed analyses of homological algebra of filtered (graded) modules which is developed by N\v{a}st\v{a}sescu and Van Oystaeyen \cite{nastasescu2006graded} and others.
 We also need to bridge between the deformation theory of filtered (graded) modules and that of filtered sheaves.
We have to study the relation between HN-filtrations of sheaves and of graded $A$-modules to get the explicit description.
For this, the work of Hoskins \cite{hoskins2018stratifications} is useful.
The author studies the relationship between HN-filtrations of quiver representations and of sheaves.
%However, the definition of slopes of graded $A$-modules is a little different from that of quiver representations. 
In Section 2, we deal with these problems and describe the image of the moduli stack of HN-filtrations with a fixed HN-type on $X$ inside the moduli stack of filtered $[p,q]$-graded $A$-modules explicitly (Theorem \ref{150320_6Jul21}). 
In Section 3, we construct a derived enhancement of moduli stacks of HN-filtrations and get the tangent complex at a point $[ 0  = \mathscr{F}^0 \subset \mathscr{F}^1 \subset \cdots \subset \mathscr{F}^s]$ of the derived stack of HN-filtrations on $X$ (Definition \ref{204924_6Jan22}, Theorem \ref{160827_17Jan22}).

In addition, the derived moduli spaces constructed in \cite{ciocan2001} or \cite{behrend2014} do not give the correct derived enhancement because higher cohomology of tangent complexes have discrepancy. 
This problem is explained in \cite[page 8]{borisov2020global}, \cite[page 5,6]{borisov2022shifted}, but not solved there.
In this paper, we fix it by using (inductive and projective) limits of finite dimensional dg-Lie algebras.
However, note that the derived moduli obtained from this construction is generally of infinite type.

In contrast to our approach, as a functorial approach, Di Natale \cite{di2017derived} constructs derived moduli stacks of filtered perfect complexes (on proper smooth schemes) which are locally geometric by using a model structure of the category of filtered complexes of modules over rings.
In Section 4, we translate our construction into theories of derived stacks and schemes in the sense of To\"en and compare our construction with that by Di Natale.
In \cite{alvarez2007functorial} or \cite{behrend2014}, functors between the category of Kronecker modules or graded modules over rings to the category of coherent sheaves on projective schemes are constructed. 
And, they study the relationship between moduli spaces of them.
For our purpose, we extend their argument to the theory of derived geometry, i.e. we construct functors between the category of (filtered) derived sheaves and the category of (filtered) $A_{\infty}$-modules and study them (Theorem \ref{232359_30Jan22}).
At the end of this section, we mention a simple example of Lagrangian morphisms related to derived moduli of HN-filtrations (Example \ref{132127_10Feb22}).

Moreover, our construction is useful to study details about symplectic geometry of derived moduli stacks of HN-filtrations because a theory for studying symplectic geometry of derived quotient stacks has been developed by Yeung (\cite{yeung2019precalabiyau}, \cite{yeung2021shifted}), recently.

\subsection*{Notation and conventions}
We work over an algebraically closed field $k$ of characteristic zero. For $m,n \in \mathbb{N}$, “ for $0 \ll m \ll n$ ” means $\exists m_0 \forall m \geq m_0 \exists n_0 \forall n \geq n_0$. %$X$ is a projective $k$-scheme of finite type. 

\section{Preliminaries}

\subsection{Derived moduli schemes associated to bundles of curved differential graded Lie algebras}

\begin{dfn}(\cite{behrend2014})
A curved differential graded Lie algebra is a quadruple $(L^\bullet, f, d, [\cdot, \cdot])$, where $(L^\bullet, [\cdot, \cdot])$ is a $\mathbb{Z}_{\geq 0}-$graded Lie algebra, $f \in L^2$, and $d: L^\bullet \rightarrow L^\bullet$ is a degree one morphism of graded $k-$vector spaces such that
\begin{inparaenum}[(1)]
	\item $d(f) = 0$, % \columnbreak 
	\item $d \circ d = [f, \cdot]$.
\end{inparaenum}

\end{dfn}

\begin{rmk}
$f$ is called the curving and $d$ the twisted differential.
	If $f = 0$, then a triple $(L^\bullet, d, [\cdot, \cdot])$ is called a differential graded Lie algebra (dgla).
\end{rmk}
 \begin{dfn}(\cite{behrend2014})
	Let $(L^\bullet, d, [\cdot, \cdot])$ be a dgla. Let $a \in L^1$.
	Then, $a$ is a Maurer–Cartan element if $a$ satisfies the equation
\begin{equation}
 da + \frac{1}{2}[a, a] = 0 .\notag
\end{equation}
	The set of Maurer-Cartan elements of $L$ is denoted by $\op{MC}(L)$.

\end{dfn}

\begin{thm}(\cite{behrend2014,behrend2018moduli})
	Let $X$ be a scheme over $k$. Let $\mathscr{L}$ be a bundle of curved differential graded Lie algebra (curved dgla) over $X$. Then, we can associate a sheaf of differential graded algebras $\mathscr{R}_X$ by letting the underlying sheaf of graded $\mathscr{O}_X-$algebra be
	\[ \mathscr{R}_X := \op{Sym}_{\mathscr{O}_X} \mathscr{L}[1]^\vee.
	\]
	The derivation $q$ on $\mathscr{R}_X$ is defined to be $q = q_1 + q_2 + q_3$, where $q_0: \mathscr{L}[1]^\vee \rightarrow \op{Sym}^0_{\mathscr{O}_X} \mathscr{L}[1]^\vee$ is defined by the curving morphism , $q_1: \mathscr{L}[1]^\vee \rightarrow \op{Sym}^1_{\mathscr{O}_X} \mathscr{L}[1]^\vee = \mathscr{L}[1]^\vee$ by the twisted differential and $q_2: \mathscr{L}[1]^\vee \rightarrow \op{Sym}^2_{\mathscr{O}_X} \mathscr{L}[1]^\vee$ by the bracket. 
	
 This defines a differential graded (dg) scheme $(\mathscr{R}_{X}, q)$(about dg-schemes, see \cite{ciocan2001}).
\label{205012_23Jun21}
 \end{thm}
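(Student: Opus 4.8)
The plan is to recognize $(\mathscr{R}_X, q)$ as the sheafified, curved analogue of the Chevalley--Eilenberg cochain algebra of a dgla, and to organize the argument (following \cite{behrend2014,behrend2018moduli}) into three essentially formal steps---constructing the underlying graded $\mathscr{O}_X$-algebra, defining the derivation on generators, and extending it by the Leibniz rule---followed by the one substantive point, namely $q\circ q = 0$. First: because $\mathscr{L}$ is a bundle, $\mathscr{L}[1]^\vee$ is a locally free $\mathscr{O}_X$-module of finite rank, and since $\mathscr{L}$ sits in degrees $\geq 0$, its dual $\mathscr{L}[1]^\vee$ sits in degrees $\leq 1$; hence $\mathscr{R}_X = \op{Sym}_{\mathscr{O}_X}\mathscr{L}[1]^\vee$ is a well-defined sheaf of graded-commutative $\mathscr{O}_X$-algebras, free on $\mathscr{L}[1]^\vee$, with grading bounded in the direction needed for a dg-scheme in the sense of \cite{ciocan2001}.

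Second, I would invoke the universal property of the free graded-commutative algebra: a degree $+1$, $\mathscr{O}_X$-linear derivation of $\mathscr{R}_X$ is the same datum as a degree $+1$ morphism of $\mathscr{O}_X$-modules $\mathscr{L}[1]^\vee \to \mathscr{R}_X$. I take this morphism to be $q_0 + q_1 + q_2$, where $q_0 \colon \mathscr{L}[1]^\vee \to \op{Sym}^0 = \mathscr{O}_X$ is the $\mathscr{O}_X$-dual of the curving, $q_1 \colon \mathscr{L}[1]^\vee \to \op{Sym}^1 = \mathscr{L}[1]^\vee$ the dual of the twisted differential, and $q_2 \colon \mathscr{L}[1]^\vee \to \op{Sym}^2 \mathscr{L}[1]^\vee$ the dual of the (shifted) bracket. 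A bookkeeping of degrees using the shift $[1]$ shows each $q_i$ raises total degree by one, so $q := q_0 + q_1 + q_2$ extends uniquely to a degree $+1$ $\mathscr{O}_X$-linear derivation of $\mathscr{R}_X$. Since each ingredient is defined sheaf-locally and functorially in $X$, these pieces glue, producing the sheaf of dg-algebras; I would record this gluing explicitly.

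The heart of the proof, and the step I expect to be the main obstacle, is the identity $q\circ q = 0$. As $q$ is an odd derivation, $q^2$ is again a derivation, now of degree $+2$, hence it is determined by its restriction to the generators $\mathscr{L}[1]^\vee$; thus it suffices to verify $q^2|_{\mathscr{L}[1]^\vee} = 0$, which I would do by splitting off the four homogeneous components according to symmetric-power degree of the target. The $\op{Sym}^0$ component is $q_0 \circ q_1$, dual to $d(f)$, and vanishes by axiom (1); the $\op{Sym}^1$ component is $q_1\circ q_1$ plus the term obtained by contracting $q_2$ in one slot against $q_0$, dual to $d\circ d - [f,\,\cdot\,]$, and vanishes by axiom (2); the $\op{Sym}^2$ component is dual to the graded Leibniz compatibility between $d$ and $[\cdot,\cdot]$ (part of the curved dgla data); and the $\op{Sym}^3$ component is $q_2 \circ q_2$, dual to the graded Jacobi identity for $[\cdot,\cdot]$. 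The one genuinely delicate matter is tracking the Koszul signs produced by dualizing and by the d\'ecalage $[1]$ so that each of the four terms matches its corresponding identity on the nose; I would carry this out in a local trivialization and a local basis of $\mathscr{L}$, which at the same time makes the gluing from the second step transparent. Assembling these yields the statement as in \cite{behrend2014,behrend2018moduli}.
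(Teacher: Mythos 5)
The paper states this result as a citation to \cite{behrend2014,behrend2018moduli} and gives no proof of its own, so there is nothing in the manuscript to compare your argument against; your write-up is the standard Chevalley--Eilenberg-type construction from the cited sources, and it is correct in substance. The decomposition of $q^2|_{\mathscr{L}[1]^\vee}$ by symmetric-power degree of the target and the identification of the four components with $d(f)=0$, $d^2=[f,\cdot]$, the Leibniz compatibility of $d$ with the bracket, and the graded Jacobi identity is exactly the right bookkeeping, and you are right that the only subtle part is tracking Koszul signs through the d\'ecalage and dualization. Two small points worth fixing: the theorem statement contains a typo ($q = q_1+q_2+q_3$ should read $q = q_0+q_1+q_2$), which you silently and correctly repair; and your claim that $\mathscr{L}$ sits in degrees $\geq 0$ giving $\mathscr{L}[1]^\vee$ in degrees $\leq 1$ is not the convention actually in force here---as Example \ref{205047_23Jun21} makes explicit, the bundle of curved dglas is concentrated in degrees $\geq 2$, so $\mathscr{L}[1]^\vee$ lives in degrees $\leq -1$ and $\mathscr{R}_X$ in degrees $\leq 0$, which is what is required for a dg-scheme in the sense of \cite{ciocan2001}; with your stated degrees $\op{Sym}_{\mathscr{O}_X}\mathscr{L}[1]^\vee$ would be unbounded above. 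Finally, you invoke the graded Leibniz compatibility of $d$ with $[\cdot,\cdot]$ to kill the $\op{Sym}^2$ component; this is indeed part of any reasonable definition of a curved dgla and is needed for $q^2=0$, but the paper's Definition 2.1 does not spell it out, so it is worth flagging that you are using an axiom implicit in (and standard for) the cited definition rather than one the paper states.
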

 
\begin{eg}(\cite{behrend2014,behrend2018moduli})
 	Let $(L^\bullet, d, [\cdot, \cdot])$ be a dgla. $X := L^1 = \op{Spec}(\op{Sym}(L^{1\vee}))$. And, $\mathscr{L}^i \rightarrow L^1$ is the trivial vector bundle with fiber $L^i (i \geq 2)$. Then, $\{\mathscr{L}^i \}_{i \geq 2}$ have a structure of bundles of curved dgla over $L^1$ by 
%{\small
 \begin{align*}
 		&\text{Curvatue}  &&f : L^1 \rightarrow \mathscr{L}^2, \quad a \mapsto (a, da+\frac{1}{2}[a,a]), \\
 		&\text{Differential} &&{d}' : \mathscr{L}^i \rightarrow \mathscr{L}^{i+1}, \quad (\mu, b) \mapsto (\mu, db+[\mu, b]), \\
 		&\text{Bracket} &&[\cdot, \cdot]' : \mathscr{L}^i \times \mathscr{L}^j \rightarrow \mathscr{L}^{i+j}, \quad ( (\mu, b), (\lambda, c) ) \mapsto (\mu+\lambda, [b, c]).
 \end{align*}
%}
 	In addtion, we have an isomorphism $\op{Spec}(H^0(\mathscr{R}_X)) \simeq \op{MC}(L) = Z(f)$.
\label{205047_23Jun21}
 \end{eg}
 %% Verify the definition of bracket.  The def of differential in \cite{behrend2004} is different from that of \cite{behrend2018moduli} ??

\subsection{Homological algebra of filtered modules}
\label{153759_23Jun21} 

Let $R$ be a unital commutative ring
\begin{dfn}
(\cite{illusie1971}, \cite{kremer2014}, \cite{nastasescu2006graded})
A filtered $R$-module $M$ is a $R$-module $M$ with a ascending chains $\{M^i \mid i \in \mathbb{Z}\}$ of $R$-submodule of $M$ such that if $i \leq 0$, $M^{i} = 0$ and for $i \gg 0$, $M^{i} = M$.
When  we denote the minimum integer of the integers by $s$, 
% A filtered module is a $R$-module $M$ with a finite filtration of $R-$modules
we write
	\[ 0 = M^0 \subset M^1 \subset \cdots \subset M^s = M. \]
	And, let $M_1, M_2$ be filtered $R-$modules. 
	A homomorphism of filtered $R$-modules from $M_1$ to $M_2$ is a $R-$module homomorphism from $M_1$ to $M_2$ preserving their filtration. We denote the set of homomorphisms by $\op{Hom}_{R,-}(M_1, M_2)$. 
\label{160543_25Jun21}
\end{dfn}

\begin{dfn}
(\cite{illusie1971}, \cite{kremer2014}, \cite{nastasescu2006graded})
\label{160551_25Jun21}
	Let $M$ be a filtered $R$-module. Then, associated graded module $\op{gr}(M)$ is defined to be
	\[ \op{gr}(M) := \oplus M^i/M^{i-1} \quad \op{gr}_i(M):=  M^i/M^{i-1} .\] 
\end{dfn}

\begin{dfn}
(\cite{illusie1971}, \cite{kremer2014}, \cite{nastasescu2006graded})
	Let $I$ be a filtered $R$-module. We say $I$ is a filtered injective if the components $\op{gr}_i(I)$ of the associated graded module are injective $R$-modules.
\label{160607_25Jun21}
\end{dfn}
\begin{dfn}
(\cite{illusie1971}, \cite{kremer2014}, \cite{nastasescu2006graded})
\label{160617_25Jun21}
	Let $M$ be a filtered $R$-module. Then, a filtered injective resolution $M$ is a exact sequence of $R$-modules
	\[ 0 \rightarrow M \rightarrow I^0 \rightarrow I^1 \rightarrow \cdots \]
	such that induced sequences
	\[ 0 \rightarrow \op{gr}_i(M) \rightarrow \op{gr}_i(I^0) \rightarrow \op{gr}_i(I^1) \cdots  \]
	are the injective resolutions of $\op{gr}_i(M)$, where $I^0, I^1, \cdots $ be filtered injective $R$-modules.
\end{dfn}

\begin{dfn}
(\cite{drezet1985, huybrechts2010geometry},)
\label{161700_25Jun21}
	Let $M,N$ be a filtered $R$-module. Let $0 \rightarrow M \rightarrow I^{\bullet}$ be a filtered injective resolution of $R$. Then, 
	\[ \op{Ext}_{R,-}^i (N, M) : = H^i(\op{Hom}_{R,-}(N, I^\bullet)). \]
\end{dfn}

\begin{rmk}
	In the above definition, $\op{Ext}_{R,-}(N,M)$ are independent of the choice of the filtered resolution of $M$.
	In \cite{drezet1985, huybrechts2010geometry}, $\op{Ext}_-^i(N,M)$ are defined when $M,N$ are filtered coherent sheaves on algebraic varieties.
\end{rmk}

\begin{thm}
(\cite{drezet1985,huybrechts2010geometry})
\label{161710_25Jun21}
	Let $M,N$ be filtered $R$-modules. There are spectral sequences.	
\[ \op{Ext}_{R, -}^{p+q}(N,M) \Leftarrow E_1^{pq} = \begin{cases}
0 & p < 0 \\
\prod_i \op{Ext}_{R}^{p+q}(\op{gr}_i(N), \op{gr}_{i-p}(M))  & p \geq 0
\end{cases}
	\]
\end{thm}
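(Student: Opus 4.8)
The plan is to mimic the classical construction of the spectral sequence of a filtered complex, applied to the Hom-complex against a filtered injective resolution of $M$. First I would fix a filtered injective resolution $0 \to M \to I^\bullet$ as in Definition~\ref{161700_25Jun21}; since $M$ has finite filtration length, say $0 = M^0 \subset M^1 \subset \cdots \subset M^s = M$, I would moreover choose $I^\bullet$ so that each $I^n$ has filtration length at most $s$ (one may build it from injective resolutions of the finitely many graded pieces $\op{gr}_i(M)$, $1 \le i \le s$, equipping the direct sum with the ``stupid'' filtration). Put $C^\bullet := \op{Hom}_{R,-}(N, I^\bullet)$, so that $H^\bullet(C^\bullet) = \op{Ext}_{R,-}^\bullet(N, M)$ by Definition~\ref{161700_25Jun21}, and equip $C^n$ with the decreasing filtration
\[ F^p C^n := \{\, \varphi \in \op{Hom}_{R,-}(N, I^n) \ \mid \ \varphi(N^i) \subseteq (I^n)^{i-p} \text{ for all } i \,\}. \]
Since a filtered homomorphism already satisfies $\varphi(N^i) \subseteq (I^n)^i$, one has $F^p C^n = C^n$ for all $p \le 0$, while $F^p C^n = 0$ for $p$ large (indeed $\varphi(N) = \varphi(N^{s_N}) \subseteq (I^n)^{s_N - p} = 0$ once $p \ge s_N$, where $s_N$ is the filtration length of $N$); hence the filtration is exhaustive and bounded. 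As the structure maps $I^n \to I^{n+1}$ are filtered, the differential of $C^\bullet$ preserves $F^\bullet$, so one obtains a spectral sequence, convergent by boundedness, with abutment $H^{p+q}(C^\bullet) = \op{Ext}_{R,-}^{p+q}(N, M)$ and with $E_1^{pq} = 0$ for $p < 0$.

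The remaining work is to identify the $E_1$-page, and the key input is that filtered injective modules split. By induction each $(I^n)^{k-1}$ is a finite direct sum of the injective graded pieces $\op{gr}_j(I^n)$ with $j \le k-1$, hence injective, so each extension $0 \to (I^n)^{k-1} \to (I^n)^k \to \op{gr}_k(I^n) \to 0$ splits and one gets $I^n \cong \bigoplus_k \op{gr}_k(I^n)$ compatibly with the filtrations (the stupid one on the right). Writing $E_k := \op{gr}_k(I^n)$, a filtered homomorphism $N \to I^n$ is then the same as a tuple $(\varphi_k)_k$ with $\varphi_k \in \op{Hom}_R(N/N^{k-1}, E_k)$, and a short computation gives $F^p C^n = \prod_k \op{Hom}_R(N/N^{k+p-1}, E_k)$ for $p \ge 0$. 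Applying the exact functor $\op{Hom}_R(-, E_k)$ (exact since $E_k$ is injective) to $0 \to \op{gr}_{k+p}(N) \to N/N^{k+p-1} \to N/N^{k+p} \to 0$ then yields
\[ E_0^{pq} = F^p C^{p+q}/F^{p+1} C^{p+q} \cong \prod_k \op{Hom}_R\big(\op{gr}_{k+p}(N), \op{gr}_k(I^{p+q})\big) = \prod_i \op{Hom}_R\big(\op{gr}_i(N), \op{gr}_{i-p}(I^{p+q})\big) \]
for $p \ge 0$ (reindexing $i = k+p$). Since the induced differential $d_0$ is the one coming from $I^\bullet \to I^{\bullet + 1}$, i.e. the differential of the complex $\op{Hom}_R(\op{gr}_i(N), \op{gr}_{i-p}(I^\bullet))$, and since $\op{gr}_{i-p}(I^\bullet)$ is by construction an injective resolution of $\op{gr}_{i-p}(M)$, exactness of products gives
\[ E_1^{pq} = \prod_i H^{p+q}\big(\op{Hom}_R(\op{gr}_i(N), \op{gr}_{i-p}(I^\bullet))\big) = \prod_i \op{Ext}_R^{p+q}\big(\op{gr}_i(N), \op{gr}_{i-p}(M)\big) \qquad (p \ge 0), \]
and $0$ for $p < 0$, as desired; the Remark following Definition~\ref{161700_25Jun21} then shows the whole construction is independent of the chosen resolution.

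The hard part will be the index bookkeeping in the identification of $F^p C^n/F^{p+1}C^n$, together with the splitting statement for filtered injectives: one must use that filtered maps can only ``improve'' the filtration and never worsen it, which is precisely what collapses the potential negative-$p$ part of $E_1$ and forces $E_1^{pq} = 0$ for $p < 0$ rather than producing spurious terms. A minor but necessary point is to arrange the filtered injective resolution with uniformly bounded filtration length; this is harmless since $M$ has only finitely many nonzero graded pieces, and it is what guarantees convergence of the spectral sequence.
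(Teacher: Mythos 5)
Your argument is correct and is the standard proof of this spectral sequence (the one in the cited references \cite{drezet1985, huybrechts2010geometry} and in Illusie), which the paper itself states without proof; it is also precisely the technique the paper deploys for the analogous graded statements in Lemma~\ref{165514_26Jun21} and Claim~\ref{181958_29Jun21}, namely the decreasing filtration on the Hom-complex against a filtered injective resolution, the splitting $I^n \cong \bigoplus_k \op{gr}_k(I^n)$ of filtered injectives, and the identification of the associated graded via the short exact sequences $0 \to \op{gr}_{k+p}(N) \to N/N^{k+p-1} \to N/N^{k+p} \to 0$. The only minor imprecision is the parenthetical on building $I^\bullet$ by ``taking the direct sum with the stupid filtration'': each term $I^n$ does split that way, but the complex $I^\bullet$ itself is built by a filtered horseshoe-type construction and is not the direct sum of the complexes $\op{gr}_i(I^\bullet)$ (otherwise the spectral sequence would degenerate at $E_1$); this does not affect the rest of the argument, which correctly uses only the term-wise splitting.
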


\subsection{Harder-Narasimhan filtration of sheaves and modules}
\subsubsection{For sheaves}
Let $X$ be a projective $k$-scheme of finite type and $\mathcal{O}_X(1)$ be a very ample invertible sheaf on $X$.
\begin{dfn}
	Let $P(t), Q(t) \in \mathbb{Q}[t]$. We say $P(t) (\succeq) Q(t)$ if 
	 \[\frac{P(m)}{P(n)} (\geq) \frac{Q(m)}{Q(n)}  \quad \text{for} \quad m \gg n \gg 0.\]
\end{dfn}
\begin{dfn}
(\cite{behrend2014,hoskins2018stratifications})
Let $\mathscr{F}$ be a coherent sheaf on $X$.
$\mathscr{F}$ is (semi)stable if for every proper nonzero subsheaf $\mathscr{E}$, $P(\mathscr{F}, t) (\succeq) P(\mathscr{E}, t)$, where $P(\mathscr{F}, t)$ and $P(\mathscr{E}, t)$ are the Hilbert polynomials of $\mathscr{F}$ and $\mathscr{E}$ with respect to $\mathcal{O}_X(1)$ respectively. \label{def:stability}
\end{dfn}
\begin{dfn}
(\cite{hoskins2018stratifications})
	Let $\mathscr{F}$ be a coherent sheaf on $X$. The Harder-Narasimhan(HN) filtration of $\mathscr{F}$ is a filtration 
\[
0= \mathscr{F}^0 \subset \mathscr{F}^1 \subset \cdots \subset \mathscr{F}^s = \mathscr{F}
\]
such that the $ \mathscr{F}^i/\mathscr{F}^{i-1}$ are semistable and $P(\mathscr{F}^1/ \mathscr{F}^0, t) \succ P(\mathscr{F}^2/ \mathscr{F}^1, t) \succ \cdots \succ P(\mathscr{F}^s/ \mathscr{F}^{s-1}, t)$. \label{def:HN-filtration}
\end{dfn}
\begin{thm}
(\cite{hoskins2018stratifications})
	Every coherent sheaf on $X$ has a unique HN-filtration \label{thm:HN-filtration}
\end{thm}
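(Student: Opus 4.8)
The plan is to adapt the classical existence-and-uniqueness argument for Gieseker semistability (see \cite{huybrechts2010geometry}) to the present setting, where the total ordering $\succeq$ on $\mathbb{Q}[t]$ plays the role of comparison of reduced Hilbert polynomials. Existence is obtained by producing, for every nonzero coherent sheaf $\mathscr{F}$, a canonical \emph{maximal destabilizing subsheaf} and then iterating; uniqueness follows from the vanishing of $\op{Hom}$ between semistable sheaves with strictly $\succ$-ordered Hilbert polynomials.

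\textbf{Existence.} First I would show that every nonzero coherent sheaf $\mathscr{F}$ on $X$ admits a semistable subsheaf $\mathscr{F}^1 \subseteq \mathscr{F}$ with $P(\mathscr{E},t) \preceq P(\mathscr{F}^1,t)$ for all nonzero $\mathscr{E} \subseteq \mathscr{F}$, equality forcing $\mathscr{E} \subseteq \mathscr{F}^1$. The essential input is a Grothendieck-type boundedness statement: the subsheaves of $\mathscr{F}$ whose Hilbert polynomial is $\succeq$ a fixed polynomial form a bounded family, so the set $\{\, P(\mathscr{E},t) : 0 \neq \mathscr{E} \subseteq \mathscr{F} \,\}$ has a $\succeq$-maximal class $P_{\max}$. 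Among the subsheaves attaining $P_{\max}$ one selects one of maximal multiplicity, and a standard argument applied to sums and intersections of such candidates (using that the reduced Hilbert polynomial of a sheaf carrying a filtration is $\preceq$ the maximum of those of its graded pieces, and that a quotient of a semistable sheaf has reduced Hilbert polynomial $\succeq$ that of the sheaf) shows this subsheaf $\mathscr{F}^1$ is unique, hence independent of choices, and that it is semistable. Applying the same construction to $\mathscr{F}/\mathscr{F}^1$ and taking the preimage gives $\mathscr{F}^2 \subseteq \mathscr{F}$; iterating produces an ascending chain $0 = \mathscr{F}^0 \subset \mathscr{F}^1 \subset \cdots$ with semistable quotients and $P(\mathscr{F}^i/\mathscr{F}^{i-1},t) \succ P(\mathscr{F}^{i+1}/\mathscr{F}^i,t)$. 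Because the pair (dimension, multiplicity) of $\mathscr{F}^i/\mathscr{F}^{i-1}$ strictly decreases lexicographically, the chain terminates, producing an HN filtration of $\mathscr{F}$.

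\textbf{Uniqueness.} The key lemma is that if $\mathscr{G},\mathscr{H}$ are semistable with $P(\mathscr{G},t) \succ P(\mathscr{H},t)$, then $\op{Hom}(\mathscr{G},\mathscr{H}) = 0$: the image $\mathscr{I}$ of a nonzero map is a quotient of $\mathscr{G}$, so $P(\mathscr{I},t) \succeq P(\mathscr{G},t)$, and a subsheaf of $\mathscr{H}$, so $P(\mathscr{I},t) \preceq P(\mathscr{H},t)$, a contradiction. A consequence, obtained by intersecting an arbitrary subsheaf with the terms of an HN filtration and comparing graded pieces, is that the first term $\mathscr{F}^1$ of any HN filtration of $\mathscr{F}$ has the $\succeq$-largest Hilbert polynomial among all nonzero subsheaves of $\mathscr{F}$. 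Now let $0 \subset \mathscr{F}^1 \subset \cdots$ and $0 \subset \mathscr{G}^1 \subset \cdots$ be two HN filtrations of $\mathscr{F}$. If $\mathscr{F}^1 \not\subseteq \mathscr{G}^1$, choose the least $l \geq 2$ with $\mathscr{F}^1 \subseteq \mathscr{G}^l$; then $\mathscr{F}^1 \to \mathscr{G}^l/\mathscr{G}^{l-1}$ is nonzero, contradicting the lemma since $P(\mathscr{F}^1,t) \succeq P(\mathscr{G}^1,t) \succ P(\mathscr{G}^l/\mathscr{G}^{l-1},t)$. Hence $\mathscr{F}^1 \subseteq \mathscr{G}^1$, and by symmetry $\mathscr{F}^1 = \mathscr{G}^1$; inducting on the length with $\mathscr{F}/\mathscr{F}^1$ finishes the proof.

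The step I expect to be the main obstacle is the existence of the maximal destabilizing subsheaf: establishing the boundedness of the family of subsheaves of $\mathscr{F}$ with $\succeq$-large Hilbert polynomial over an arbitrary projective $k$-scheme of finite type (rather than a smooth integral variety, so the usual estimates for torsion-free sheaves do not apply verbatim), together with the verification that the $\succeq$-maximal Hilbert polynomial is attained by a unique semistable subsheaf and the bookkeeping required for sheaves of mixed dimension. All of this is carried out in detail in \cite{hoskins2018stratifications}, to which I refer for the technical estimates.
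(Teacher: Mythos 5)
The paper does not prove this statement; it records it as a known result and cites \cite{hoskins2018stratifications}, so there is no internal proof to compare against. Your reconstruction follows the standard Huybrechts--Lehn existence-and-uniqueness scheme transported to the $\succeq$-ordering of Definition \ref{def:stability}, and its skeleton is sound. One thing worth making explicit: your uniqueness lemma, and every passage between subsheaves and quotients of a semistable sheaf, rests on a seesaw property for $\succeq$ --- for Hilbert polynomials with $P = Q + R$, one has $Q \preceq P$ if and only if $P \preceq R$ --- and this does hold for sheaf Hilbert polynomials (both cross-multiplications $Q(m)P(n) \leq P(m)Q(n)$ and $P(m)R(n) \leq R(m)P(n)$ unfold to the same inequality $Q(m)R(n) \leq R(m)Q(n)$), but it must actually be checked rather than quoted, since the paper remarks after Definition \ref{def:module-hn} that the analogous seesaw property \emph{fails} for the ordering on $\lambda$-modules; that failure is precisely why HN filtrations of graded $\lambda$-modules need not be unique while those of sheaves are, so this is not a point one can wave at. Beyond that, the step you flag as the main obstacle --- existence of a $\succeq$-maximal Hilbert polynomial among the nonzero subsheaves of $\mathscr{F}$, in a setting where $\mathscr{F}$ may be impure and the comparison $\succeq$ mixes sheaves of different dimension, so Grothendieck's lemma in its usual torsion-free form does not apply verbatim --- is indeed the genuine technical content of \cite{hoskins2018stratifications}; deferring it to that reference is the only honest move in this framework, and it is exactly what the paper itself does.
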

\begin{rmk}
	Our definitions of stability and HN-filtration are different from those in \cite{huybrechts2010geometry}. However, these definitions coincide when $\mathcal{F}$ is a pure sheaf. A benefit of our definitions is that we can consider all coherent sheaves within torsion sheaves.
\end{rmk}

\subsubsection{For modules}
Let $A$ be a graded $k$-algebra which is all in non negative degrees and each graded piece is finite-dimensional and $A_0 = k$.

\begin{dfn}
(\cite{behrend2014})
 Let $M$ be a $[p,q]$-graded $k$-module.
 Let $\lambda : A \otimes_k M \rightarrow M$ be a homomorphism of graded $k$-vector space.
Then, we call $M$ a $\lambda$-module and a $k$-submodule $N$ of $M$ is a $\lambda$-submodule of $M$ if $\lambda(A \otimes_k N ) \subset N$.
\end{dfn}

\begin{rmk}
\begin{itemize}
\item In the above definition, $\lambda$ is determined by $\lambda|_{A_{[0, q-p]} \otimes_k M}$
 \item  Any $[p,q]$-graded $A$-module $M$ has a natural $\lambda$-module structure from the $A$-module structure of $M$. 
\end{itemize}
\end{rmk}
\begin{dfn}
	Let $M$ be a $[p,q]$-graded $k$-module with $\op{dim}M_p + \op{dim}M_q \neq 0$. Let $\theta_p, \theta_q $ be integers, then 
	\[ \mu_{(\theta_p, \theta q)}(M):= \frac{\theta_p\op{dim}M_p + \theta_q \op{dim}M_q}{\op{dim}M_p + \op{dim}M_q} .\]
	
	We write $\mu$ for $\mu_{(\theta_p, \theta_q)}$ when $\theta_p, \theta_q$ are obvious.
\end{dfn}
\begin{dfn}
(\cite{behrend2014})
	Let $M$ be a $[p,q]$-graded $\lambda$-module. 
Let $\theta_p, \theta_q$ be integers. 
%Let $\theta_p := \op{dim}M_q$ and $ \theta_q := -\op{dim}M_p$.
 $M$ is (semi)stable with respect to $(\theta_p, \theta_q)$ if for every nonzero proper $\lambda$-submodule $N$,  $\op{dim}N_p = \op{dim}N_q = 0$ or “$\op{dim}N_p + \op{dim}N_q \neq 0$ and $\mu_{(\theta_p,\theta_q)}(N) (\leq) \mu_{(\theta_p,\theta_q)}(M)$” holds.
\end{dfn}
\begin{dfn}

	Let $M$ be a $[p,q]$-graded $\lambda$-module. Let $\theta_p := \op{dim}M_q$ and $ \theta_q := -\op{dim}M_p$. The Harder-Narasimhan(HN) filtration of $M$ is a filtration of $\lambda$-submodules of $M$
	\[
	0 = M^0 \subset M^1 \subset \cdots \subset M^s = M
	\]
	such that the $M^i/M^{i-1}$ are semistable with respect to $(\theta_p, \theta_q)$ and $(M^1/M^0) \succ (M^2/M^1) \succ \cdots \succ (M^s/M^{s-1})$.
	
	 For two $[p,q]$-graded $\lambda$-modules $N^1,N^2$, we say $N^1 (\succeq) N^2$ if $\op{dim}N^2_p = \op{dim}N^2_q = 0$ or “$\op{dim}N^1_p + \op{dim}N^1_q \neq 0$ and  $\op{dim}N^2_p + \op{dim}N^2_q \neq 0$ and $\mu_{(\theta_p, \theta_q)}(N^1) (\geq) \mu_{(\theta_p, \theta_q)}(N^2)$”. 
	 \label{def:module-hn}
\end{dfn}

 \begin{rmk}
  The above definition is similar to that of quiver representation, but slightly different from it. 
The difference derives from the existence of the components $M_{p+1}, \cdots M_{q-1}$ (cf. \cite{reineke2003harder}, \cite{zamora2014harder} or \cite{hoskins2018stratifications})
 \end{rmk}

\begin{thm}Let $M$ be a $[p,q]$-graded $\lambda$-module.
There exists a HN-filtration of $M$	
\end{thm}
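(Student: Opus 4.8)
The plan is to run the classical Harder--Narasimhan argument, in the form used for quiver representations in \cite{reineke2003harder, hoskins2018stratifications}, with the slope function of Definition \ref{def:module-hn}. Work in the abelian category of $[p,q]$-graded $\lambda$-modules, and for a $\lambda$-submodule $N \subseteq M$ set $\op{rk}(N) := \op{dim}N_p + \op{dim}N_q$ and $\op{deg}(N) := (\op{dim}M_q)(\op{dim}N_p) - (\op{dim}M_p)(\op{dim}N_q)$, so that $\mu_{(\theta_p,\theta_q)}(N) = \op{deg}(N)/\op{rk}(N)$ whenever $\op{rk}(N) \neq 0$. Both $\op{rk}$ and $\op{deg}$ are additive on short exact sequences of $[p,q]$-graded $\lambda$-modules, one has $\op{rk} \geq 0$, and $\op{deg}(N) = 0$ whenever $\op{rk}(N) = 0$; thus $\mu$ behaves like an ordinary slope, the rank-zero modules (those supported in degrees $[p+1,q-1]$) playing the role of torsion. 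Since $M$ is finite-dimensional, the pairs $(\op{dim}N_p, \op{dim}N_q)$, hence the numbers $\mu(N)$, take only finitely many values over all $\lambda$-submodules $N$, so every extremum invoked below exists and no chain condition needs to be checked.

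First I would dispose of the base cases: if $M = 0$, or if $\op{rk}(M) = 0$ (so that every $\lambda$-submodule is rank-zero and $M$ is automatically semistable with respect to $(\theta_p,\theta_q)$), the filtration $0 = M^0 \subset M^1 = M$ works. Otherwise let $\mu_{\max}$ be the largest value of $\mu(N)$ over $\lambda$-submodules $N$ with $\op{rk}(N) \neq 0$, and let $M^1$ be the sum of all such $N$ attaining $\mu_{\max}$. Applying additivity of $\op{rk}$ and $\op{deg}$ to $0 \to N_1 \cap N_2 \to N_1 \oplus N_2 \to N_1 + N_2 \to 0$, and using $\op{deg}(N_1 \cap N_2) = 0$ when $N_1 \cap N_2$ is rank-zero and $\mu(N_1 \cap N_2) \leq \mu_{\max}$ otherwise, one gets $\mu(N_1 + N_2) = \mu_{\max}$ whenever $\mu(N_1) = \mu(N_2) = \mu_{\max}$; hence $M^1$ is the unique maximal $\lambda$-submodule with $\op{rk}(M^1) \neq 0$ and $\mu(M^1) = \mu_{\max}$, and the same computation with $N_2$ the maximal rank-zero submodule shows $M^1$ contains every rank-zero submodule. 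By maximality of $\mu_{\max}$, $M^1$ is semistable with respect to $(\theta_p,\theta_q)$.

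Next I would show the maximal slope of a positive-rank $\lambda$-submodule of $M/M^1$ is strictly less than $\mu_{\max}$: a submodule $\bar N \subseteq M/M^1$ with $\op{rk}(\bar N) \neq 0$ and $\mu(\bar N) \geq \mu_{\max}$ would lift to $N \subseteq M$ with $\op{rk}(N) \neq 0$ and, by additivity along $0 \to M^1 \to N \to \bar N \to 0$, with $\mu(N) \geq \mu_{\max}$, hence $\mu(N) = \mu_{\max}$ and $N \supsetneq M^1$, contradicting the maximality of $M^1$. Now I would induct on $\op{dim}_k M$: since $\op{dim}_k(M/M^1) < \op{dim}_k M$, the module $M/M^1$ has an HN-filtration $0 = M^1/M^1 \subset M^2/M^1 \subset \cdots \subset M^s/M^1 = M/M^1$, and pulling it back yields $0 = M^0 \subset M^1 \subset \cdots \subset M^s = M$ with every $M^i/M^{i-1}$ semistable with respect to $(\theta_p,\theta_q)$. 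It remains to verify $(M^1/M^0) \succ (M^2/M^1) \succ \cdots \succ (M^s/M^{s-1})$: the comparisons among the quotients of $M/M^1$ are the inductive hypothesis, while $M^1 \succ M^2/M^1$ holds either because $\op{rk}(M^2/M^1) \neq 0$ and $\mu(M^1) = \mu_{\max}$ strictly exceeds the maximal slope of $M/M^1$, or because $M^2/M^1$ is rank-zero while $M^1$ is not, in which case $M^1 \succeq M^2/M^1$ but not conversely, straight from the definition of $\succeq$ in Definition \ref{def:module-hn}.

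The step I expect to be the main obstacle is the bookkeeping forced by the rank-zero submodules: one must check that they are absorbed into $M^1$, that they never destabilize, and that in the relation $\succeq$ a rank-zero quotient is genuinely smaller than a positive-rank one, so that the asymmetry built into Definition \ref{def:module-hn} does not break the descending chain of slopes. This is precisely where the argument departs from the quiver-representation case (cf.\ the remark after Definition \ref{def:module-hn}), so it deserves to be written out rather than merely cited.
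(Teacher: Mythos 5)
Your argument is the standard Harder--Narasimhan construction, which is exactly what the paper has in mind (its own proof is a bare citation of Reineke and Zamora), and your rank-zero bookkeeping is carried out correctly: the mediant computation showing $M^1$ is well-defined and maximal, the absorption of all rank-zero $\lambda$-submodules into $M^1$, the strict drop of the maximal slope after passing to $M/M^1$, and the pull-back of the quotient filtration are all in order.

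There is, however, one step your induction elides, and it is precisely the place where Definition~\ref{def:module-hn} deviates from a fixed-slope setting. When you invoke the theorem on $M/M^1$ and then assert that the resulting quotients are ``semistable with respect to $(\theta_p,\theta_q)$,'' note that the theorem applied to $M/M^1$ furnishes semistability with respect to the pair $(\theta_p',\theta_q') = (\dim(M/M^1)_q,\,-\dim(M/M^1)_p)$, which is \emph{not} a positive multiple of your fixed $(\theta_p,\theta_q) = (\dim M_q,\,-\dim M_p)$ in general, so the inductive hypothesis does not literally give what you claim. The gap is fillable: expanding $\mu_{(\theta_p,\theta_q)}(N) \leq \mu_{(\theta_p,\theta_q)}(M')$ gives the equivalent condition $(\theta_p-\theta_q)\bigl(\dim N_p\cdot\dim M'_q - \dim M'_p\cdot\dim N_q\bigr) \leq 0$, and the same manipulation shows the comparison $N^1 \succ N^2$ between positive-rank modules also depends on $(\theta_p,\theta_q)$ only through the sign of $\theta_p-\theta_q$. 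Since $\theta_p-\theta_q = \dim M_p + \dim M_q > 0$ and likewise $\theta_p'-\theta_q' \geq 0$ (with equality only when $M/M^1$ has rank zero, where the induction is trivial), the two notions of semistability and of $\succ$ coincide and your induction closes. Alternatively, you can sidestep the issue by running the construction in place on $M$, always using the fixed $M$-slope and taking $M^{i+1}/M^i$ to be the maximal destabilizing $\lambda$-submodule of $M/M^i$ with respect to it, rather than invoking the theorem statement on $M/M^1$. Either fix leaves the rest of your argument intact.
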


\begin{proof}
	We can prove this in the same way as in the proof of \cite[Proposition 2.5]{reineke2003harder} or \cite[Theorem 2.6]{zamora2014harder}
\end{proof}

\begin{rmk}
	 Every $[p,q]$-graded $\lambda$-module dose not necessarily have a unique HN-filtration because the relation $\succ$ in Definition \ref{def:module-hn} is not a stability structure on \cite[Def 1.1]{rudakov1997stability} (i.e., the seesaw property does not hold).
However, note that this does not have negative effects on this article.
\end{rmk}

\section{Moduli stacks of Harder-Narasimhan filtrations}

\begin{nota}
\begin{itemize}
 \item  $X:$ a connected projective scheme over $k$.
 %\item $Y$ : a scheme over $k$
 \item  $\mathscr{O}_X(1):$ a very ample imbertible sheaf on $X$.
 \item  $A := \Gamma_*(\mathscr{O}_X) = \oplus_{i \geq  0} \Gamma(X,\mathscr{O}(i))$.
\item $\mathfrak{m} := A_{> 0 }$.
 %\item $\mathfrak{FC}\op{oh}(X)$ : the stack of filtered coherent sheaves on $X$.
\item $\mathcal{C}\op{oh}_{\alpha}(X)$ : the stack of coherent sheaves with Hilbert polynomial $\alpha$ on $X$.
 \item $\mathcal{FC}\op{oh}_{(\alpha_1, \cdots, \alpha_s)}(X)$ : the stack of filtered coherent sheaves on $X$ of type $(\alpha_1, \cdots, \alpha_s)$
 \item $\mathcal{FC}\op{oh}^{\text{HN}}_{(\alpha_1, \cdots, \alpha_s)}(X)$ : the stack of Harder-Narasimhan filtrations (of sheaves) of type $(\alpha_1, \cdots, \alpha_s)$ on $X$.
 %\item $\mathfrak{FM}\op{od}^{[p,q]}(A)$ : the stack of filtered graded $A-$modules in degree $[p,q]$.
 \item $\mathcal{M}\op{od}^{[p,q]}_{\alpha}(A)$ : the stack of graded $A$-modules of type $\alpha|_{[p,q]}$ in degree $[p,q]$. 
 \item $\mathcal{FM}\op{od}^{[p,q]}_{(\alpha_1, \cdots , \alpha_s)}(A)$ : the stack of filtered graded $A$-modules of type $(\alpha_1|_{[p,q]}, \cdots , \alpha_s|_{[p,q]})$ in degree $[p,q]$, 
where $\alpha_i|_{[p,q]}$ means a tuple $(\alpha_i(p), \cdots , \alpha_i(q))$.
\item $\mathcal{FM}\op{od}^{[p,q], \text{sfg}}_{(\alpha_1, \cdots , \alpha_s)}(A)$ : the stack of strongly finitely generated filtered graded $A$-modules of type $(\alpha_1|_{[p,q]}, \cdots , \alpha_s|_{[p,q]})$ in degree $[p,q]$.
\item $\mathcal{FM}\op{od}^{[p,q]_{\text{HN}}, \text{sfg}}_{(\alpha_1, \cdots , \alpha_s)}(A)$ : the stack of strongly finitely generated filtered graded $A$-modules of type $(\alpha_1|_{[p,q]}, \cdots , \alpha_s|_{[p,q]})$ in degree $[p,q]$ which are HN-filtrations for $(\alpha(q), -\alpha(p))$-stability.
\item $\op{Coh}(Y)$ : the category of coherent sheaves on a scheme $Y$.
\item $\op{FCoh}(Y)$ : the category of filtered coherent sheaves on a scheme $Y$.
\item $\op{Mod}^{[p,q]}(A \otimes \mathscr{O}_{Y})$ : the category of graded coherent $A \otimes \mathscr{O}_Y$-modules on $Y$ in the degree $[p,q]$.
\item $\op{FMod}^{[p,q]}(A \otimes \mathscr{O}_{Y})$ : the category of filtered graded coherent $A \otimes \mathscr{O}_Y$-modules on $Y$ in the degree $[p,q]$.
\item $\Gamma_{[p,q]}(\mathscr{F}) := \oplus^q_{i=p} \pi_*(\mathscr{F}(i)) $ for a coherent sheaf $\mathscr{F} $ on $X \times_k Y$ and the natural projection $\pi: X \times_k Y \rightarrow Y $.
\end{itemize}
\end{nota}
\begin{rmk}
 When considering any filtered $[p,q]$-graded $A$-modules $0= M^0 \subset M^1 \subset \cdots \subset M^s = M$, it is strongly finitely generated if each $M^i$ is graded $A$-module which is generated in degree $p$ .%(thus each $M_i$ is generated in degree $p$).
\end{rmk}

\subsection{Open embeddings}
 
In this subsection, we construct an open immersion from the moduli stack of filtered sheaves to that of filtered modules over $A$

First, we can define the following  morphism 
\begin{equation}
 \Gamma^{\op{fil}}_{[p,q]}: \op{FCoh}(X \times_k Y) \rightarrow \op{FMod}^{[p,q]}(A \otimes \mathscr{O}_Y)
\end{equation}
by $\Gamma^{\text{fil}}_{[p,q]}(\mathscr{F}):= \Gamma_{[p,q]}(\mathscr{F)} = \Gamma_{[p,q]}(\mathscr{F}^s) \supset \cdots \supset \Gamma_{[p,q]}(\mathscr{F}^1) \supset \Gamma_{[p,q]}(\mathscr{F}^0) = 0$ for any object $\mathscr{F}= \mathscr{F}^s \supset \cdots \supset \mathscr{F}^1 \supset \mathscr{F}^0=0$ because pushforwards are left exact.

 The morphism of category
\[
  \Gamma_{[p,q]}: \op{Coh}(X \times_k Y) \rightarrow \op{Mod}^{[p,q]}(A \otimes \mathscr{O}_Y)
\]
has the left adjoint $\mathscr{S}$ (see \cite[Proposition 3.1]{behrend2014}).

%%\begin{lem}
%Moreover, $\mathscr{S}$ is a filtration preserving functor.
%\label{174036_2Jul21}
%\end{lem}
%\begin{proof}
%Actually, for any inclusion $V \supset W$ in $\op{Mod}^{[p,q]}(A \otimes \mathscr{O}_Y)$, we have a commutative diagram 
%\[\xymatrix{
% \oplus_{i \in J_1} A \otimes \mathscr{O}_{Y}(m_j) \ar[r]  & \oplus_{i \in I_1} A \otimes \mathscr{O}_{Y}(n_i) \ar[r] & V \ar[r] & 0 \\
%\oplus_{i \in J_2} A \otimes \mathscr{O}_{Y}(m_j) \ar@{^{(}->}[u] \ar[r] & \oplus_{i \in I_2} A \otimes \mathscr{O}_{Y}(n_i) \ar@{^{(}->}[u] \ar[r]  \ar@{}[lu]|{\circlearrowright} &  W \ar[r] \ar@{^{(}->}[u] \ar@{}[lu]|{\circlearrowright}& 0
%}
%\], where $I_1 \supset I_2$, $J_1 \supset J_2$ and the rows are exact.
%We have  $\mathscr{S}(V) = \oplus_{i \in I_1} \widetilde{A \otimes \mathscr{O}_{Y}(n_i)} / \oplus_{i \in J_1} \widetilde{A \otimes \mathscr{O}_{Y}(m_j)}$
% and  $\mathscr{S}(W) = \oplus_{i \in I_2} \widetilde{A \otimes \mathscr{O}_{Y}(n_i)} / \oplus_{i \in J_2} \widetilde{A \otimes \mathscr{O}_{Y}(m_j)}$ from the construction (\cite[Proposition 3.1]{behrend2014}). And the functor $\widetilde{\quad}$ is an exact functor (see \cite[Lemma 27.17.1]{stacks-project}). So, $\mathscr{S}(V) \supset \mathscr{S}(W)$.
%\end{proof}

%\begin{rmk}
% From Lemma \ref{174036_2Jul21}, we can define the associated morphism
%\[
% \mathscr{S}^{\op{fil}} : \op{FMod}^{[p,q]}(A \otimes \mathscr{O}_Y) \rightarrow \op{FCoh}(X \times_k Y).
%\]
%\end{rmk}

\subsubsection{Monomorphisms}
\begin{lem}
 $\Gamma^{\op{fil}}_{[p,q]}|_{ \mathcal{FC}\op{oh}^{\op{HN}}_{(\alpha_1, \cdots, \alpha_s)}(X)} : \mathcal{FC}\op{oh}^{\op{HN}}_{(\alpha_1, \cdots, \alpha_s)}(X) \rightarrow \mathcal{FM}\op{od}^{[p,q]}_{(\alpha_1, \cdots , \alpha_s)}(A) $ is monomorphism if $q \gg p \gg 0$.
\label{224726_27Jun21}
\end{lem}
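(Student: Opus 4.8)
The plan is to reduce the statement to a full-faithfulness assertion and then run the adjunction $\mathscr{S}\dashv\Gamma_{[p,q]}$ against a uniform Serre-vanishing estimate. Recall that a morphism of stacks is a monomorphism precisely when, for every $k$-scheme $T$, the induced functor on $T$-points is fully faithful; since the $T$-points here form groupoids it is enough to check that
\[
\Gamma^{\op{fil}}_{[p,q]}\colon \op{Hom}\big((\mathscr{F}_1^\bullet),(\mathscr{F}_2^\bullet)\big)\longrightarrow \op{Hom}\big(\Gamma^{\op{fil}}_{[p,q]}(\mathscr{F}_1^\bullet),\Gamma^{\op{fil}}_{[p,q]}(\mathscr{F}_2^\bullet)\big)
\]
is bijective for any two objects $(\mathscr{F}_i^\bullet)=(0=\mathscr{F}_i^0\subset\cdots\subset\mathscr{F}_i^s=\mathscr{F}_i)$ of $\mathcal{FC}\op{oh}^{\op{HN}}_{(\alpha_1,\dots,\alpha_s)}(X)(T)$; here a morphism on the left is a map $\psi\colon\mathscr{F}_1\to\mathscr{F}_2$ of $T$-flat sheaves on $X\times_k T$ with $\psi(\mathscr{F}_1^j)\subset\mathscr{F}_2^j$ for all $j$, and on the right it is a map $\phi$ of graded $A\otimes\mathscr{O}_T$-modules carrying each $\Gamma_{[p,q]}(\mathscr{F}_1^j)$ into $\Gamma_{[p,q]}(\mathscr{F}_2^j)$. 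Note first that $\Gamma^{\op{fil}}_{[p,q]}$ is well defined, i.e. lands in honest filtered modules, because $\pi_*$ is left exact and hence preserves the inclusions $\mathscr{F}_i^j\hookrightarrow\mathscr{F}_i^{j+1}$.

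The key input is the adjunction $\mathscr{S}\dashv\Gamma_{[p,q]}$ of \cite[Proposition 3.1]{behrend2014} together with the boundedness of the HN stratum. Fixing the HN type $(\alpha_1,\dots,\alpha_s)$ bounds the family consisting of all the $\mathscr{F}_i$, of all the subsheaves $\mathscr{F}_i^j$ (which have Hilbert polynomial $\alpha_1+\cdots+\alpha_j$), and of all their subquotients, uniformly over $T$; this uses the boundedness results for HN strata in the spirit of \cite{hoskins2018stratifications} and Grothendieck boundedness. Hence, by the Serre-vanishing/regularity estimates of \cite{behrend2014} (cohomology and base change), there are $p_0,q_0$ so that for all $q\gg p\gg 0$ the counit
\[
\varepsilon_{\mathscr{G}}\colon \mathscr{S}\big(\Gamma_{[p,q]}(\mathscr{G})\big)\xrightarrow{\ \sim\ }\mathscr{G}
\]
is an isomorphism for every member $\mathscr{G}$ of this family and every $T$; in particular $\Gamma_{[p,q]}$ is fully faithful on this family, via $\op{Hom}_{A}(\Gamma_{[p,q]}\mathscr{F}_1,\Gamma_{[p,q]}\mathscr{F}_2)\cong\op{Hom}(\mathscr{S}\Gamma_{[p,q]}\mathscr{F}_1,\mathscr{F}_2)\cong\op{Hom}(\mathscr{F}_1,\mathscr{F}_2)$. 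Fix such a pair $(p,q)$.

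Faithfulness of $\Gamma^{\op{fil}}_{[p,q]}$ is then immediate: if $\Gamma^{\op{fil}}_{[p,q]}(\psi)=0$ then $\Gamma_{[p,q]}(\psi\colon\mathscr{F}_1\to\mathscr{F}_2)=0$, so applying $\mathscr{S}$ and using naturality of $\varepsilon$ gives $\psi=0$. For fullness, let $\phi$ be a morphism of filtered modules. Applying $\mathscr{S}$ to the maps $\Gamma_{[p,q]}(\mathscr{F}_1^j)\to\Gamma_{[p,q]}(\mathscr{F}_2^j)$ induced by $\phi$ and composing with the counit isomorphisms yields sheaf maps $\psi^j\colon\mathscr{F}_1^j\to\mathscr{F}_2^j$; naturality of $\varepsilon$ along the inclusions $\mathscr{F}_i^j\hookrightarrow\mathscr{F}_i^{j+1}$ shows the $\psi^j$ are restrictions of $\psi:=\psi^s$, so $\psi$ is a morphism of filtered sheaves, and full faithfulness of $\Gamma_{[p,q]}$ on the bounded family gives $\Gamma^{\op{fil}}_{[p,q]}(\psi)=\phi$. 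This proves bijectivity, hence the lemma.

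The step I expect to be the main obstacle is the uniformity hidden in ``$q\gg p\gg 0$'': one must choose the thresholds $(p_0,q_0)$ so that the counit isomorphism holds simultaneously for all test schemes $T$ and for every sheaf attached to the fixed HN type (the $\mathscr{F}_i^j$ and their subquotients), not just for a single sheaf. This is exactly where the boundedness of the HN stratum and the uniform cohomology-and-base-change estimates of \cite{behrend2014} do the real work; granting them, the homological bookkeeping above is routine.
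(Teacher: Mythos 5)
Your proof is correct and takes essentially the same approach as the paper: both arguments reduce the monomorphism to full faithfulness, invoke boundedness of the HN stratum to obtain uniform $p$-regularity, and then exploit the adjunction $\mathscr{S}\dashv\Gamma_{[p,q]}$ together with the counit isomorphism $\mathscr{S}\circ\Gamma_{[p,q]}\simeq\op{id}$ (the paper via a diagram chase that lifts the top-level morphism and checks it preserves the filtration, you by constructing the level-$j$ morphisms directly and using naturality of the counit, which is only a cosmetic reorganization of the same argument).
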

\begin{proof}
First, let $S := \{\mathscr{F} \in \op{Coh}(X) \mid \text{the HN-type of } \mathscr{F} \text{ is }(\alpha_1, \cdots \alpha_s)\}$.
Then, the sets $S_i = \{\op{gr}_i(\mathscr{F}) \mid \mathscr{F} \in S \} (i \in \{1,\cdots, s\})$ are bounded because the stack of HN-filtrations of type $(\alpha_1, \cdots, \alpha_s)$ is a quotient of a relative flag scheme by an algebrac group (see \cite[Lemma 2.5]{yoshioka2009fourier}).
% First, the set of coherent sheaves whose HN-type is $(\alpha_1, \cdots, \alpha_s)$ is bounded because the stack of HN-filtration of type $(\alpha_1, \cdots, \alpha_s)$ is a quotient of a relative flag scheme by an algebraic group as a stack (see \cite[Lemma 2.5]{yoshioka2009fourier}). 
So, for $p \gg 0$, any sheaf in $\cup_i S_i$ is $p$-regular, and $\Gamma^{\op{fil}}_{[p,q]}|_{ \mathcal{FC}\op{oh}^{\op{HN}}_{(\alpha_1, \cdots, \alpha_s)}(X)}$ is well-defined.

In order to prove $\Gamma^{\op{fil}}_{[p,q]}|_{ \mathcal{FC}\op{oh}^{\op{HN}}_{(\alpha_1, \cdots, \alpha_s)}(X)}$ is a monomorphism, we will prove this is fully faithful because monomorphisms between algebraic stacks are the same as fully faithful functors (\cite[Lemma 98.8.4]{stacks-project}).

 If $\mathscr{F},\mathscr{G}$ are objects of $ \mathcal{FC}\op{oh}^{\op{HN}}_{(\alpha_1, \cdots, \alpha_s)}(X)$, we have an isomorphism
\[
\op{Hom}_{\mathscr{O}_{X \times Y}}(\mathscr{F}, \mathscr{G}) \overset{\simeq}{\rightarrow} \op{Hom}_{\op{gr}, A \otimes\mathscr{O}_Y}(\Gamma_{[p,q]}(\mathscr{F}),\Gamma_{[p,q]}(\mathscr{G}))  \quad q \gg p\gg 0
 \] 
because the induced morphism $\Gamma_{[p,q]}: \mathcal{U}_{\alpha} \rightarrow \mathcal{M}\op{od}^{[p,q]}_{\alpha}(A)$ is a monomorphism for $q \gg p \gg 0$ (\cite[Proposition 3.2]{behrend2014}), where $\mathcal{U}_{\alpha}$ is an open substack of $\mathcal{C}\op{oh}_{\alpha}(X)$ of finite type.
 So, $\Gamma^{\text{fil}}_{[p,q]}$ is faithful.

 Next we show that $\Gamma^{\text{fil}}_{[p,q]}$ is full. 
We take integers $p,q$ such that $\Gamma_{[p,q]}: \mathcal{U}_{\alpha_i} \rightarrow \mathcal{M}\op{od}^{[p,q]}_{\alpha_i}(A)$ are monomorphism.
Let $\psi: \Gamma^\text{fil}_{[p,q]}(\mathscr{F} ) \rightarrow \Gamma^\text{fil}_{[p,q]}(\mathscr{G})$ be a morphism in $\mathcal{FM}\op{od}^{[p,q]}(A)$. 
From the above isomorphism, there exists a morphism $\varphi : \mathscr{F} \rightarrow \mathscr{G}$ such that $\Gamma_{[p,q]}(\varphi) = \psi$.
Then, it is sufficient to see that $\varphi$ is a morphism in $\mathcal{FC}\op{oh}(X)^{\op{HN}}_{(\alpha_1, \cdots , \alpha_s)}$.
We may assume that $s = 2$. And, we consider the following diagrams and the corresponce by $\Gamma_{[p,q]}$ and $\mathscr{S}$:
{\small
\begin{align*}
\def\g#1{\save
 [].[d]!C="g#1"*[F(]\frm{}\restore}%
\def\h#1{\save
 [].[d]!C="h#1"*[F)]\frm{}\restore}%
\xymatrix@C=35pt@M=5pt{
 \g1 \mathscr{F}^2 \ar[r]^{\varphi} %\ar@{}[d]^(-.30){}="a"^(1.30){}="b" \ar@/_10pt/@{-}@<-1.5ex> "a";"b"
& \h1  \mathscr{G}^2 %\ar@{}[d]^(-.30){}="a"^(1.30){}="b" \ar@/^10pt/@{-}@<1ex> "a";"b"
% \ar@<-5ex>@{|->}[r]^-{\Gamma_{[p,q]}} 
%& %\ar@{}[r]^(.20){}="c"^(.70){}="d" \ar@<-5ex>@{|->} "c";"d"^-{\Gamma_{[p,q]}} 
& \g2  \Gamma_{[p,q]}\mathscr{F}^2 \ar[r]^{\psi} 
& \h2 \Gamma{[p,q]}\mathscr{G}^2 %\ar@<-5ex>@{|->}[r]^-{\mathscr{S}}
& \g3 \mathscr{S}\Gamma_{[p,q]}\mathscr{F}^2 \ar[r]^{\mathscr{S}\psi} 
& \h3 \mathscr{S}\Gamma_{[p,q]}\mathscr{G}^2 \\
\mathscr{F}^1 \ar[r]_{\varphi'} \ar@{^{(}->}[u]_{i_1} 
& \mathscr{G}^1 \ar@{^{(}->}[u]^{i_2} 
%&
%&
 &  \Gamma_{[p,q]}\mathscr{F}^1 \ar@<-1ex>[r]_{\psi|_{\Gamma_{[p,q]} \mathscr{F}^1} } \ar@<3ex>@{^{(}->}[u]_{\Gamma_{[p,q]}i_1} 
& \Gamma_{[p,q]}\mathscr{G}^1 \ar@<-3ex>@{^{(}->}[u]^{\Gamma_{[p,q]}i_2}  \ar@{}[lu]|{\circlearrowright} 
&  \mathscr{S}\Gamma_{[p,q]}\mathscr{F}^1 \ar@<-1ex>[r]_{\mathscr{S}\psi|_{\Gamma_{[p,q]} \mathscr{F}^1}} \ar@<3ex>[u]_{\mathscr{S}\Gamma_{[p,q]}i_1} 
&  \mathscr{S}\Gamma_{[p,q]}\mathscr{G}^1 \ar@<-3ex>[u]^{\mathscr{S}\Gamma_{[p,q]}i_2} \ar@{}[lu]|{\circlearrowright}
\ar@{} "h1" ;  "g2"^(.20){}="c"^(.60){}="d"
\ar@{|->} "c";"d"^-{\Gamma_{[p,q]}} 
\ar@{} "h2" ;  "g3"^(.35){}="e"^(.60){}="f"
\ar@{|->} "e";"f"^-{\mathscr{S}} 
%\ar@{|->}  "h2" ;  "g3"
}
\end{align*}
}
, where $\varphi$ and $\varphi'$ are the morphisms corresponding to $\psi$ and $\psi|_{\Gamma_{[p,q]} \mathscr{F}^1}$ respectively. 
And, the middle and the right squares are commutative.
A diagram chase and the monomorphisity of $\Gamma_{[p,q]}$ yield the commutativity of the left diagram above because that $\Gamma_{[p,q]}$ is fully faithful and $\mathscr{S}$ is a left adjoint to $\Gamma_{[p,q]}$ is equivalnt to that $\mathscr{S} \circ \Gamma_{[p,q]} \simeq \op{id}$ (\cite[Lemma 4.24.4]{stacks-project}).
 
%$\Gamma_{[p,q]}$ has a left adjoint $\mathscr{S}$(\cite[Proposition 3.1]{behrend2014}). 
%We have a surjection from $\mathscr{S}$ :
%\[
% \op{Hom}_{\text{gr}, A\otimes \mathscr{O}_Y}(\Gamma_{[p,q]}(\mathscr{F}), \Gamma_{[p,q]}(G)) \rightarrow \op{Hom}_{\mathscr{O}_{X \times Y}}(\mathscr{S}(\Gamma_{[p,q]}(F)), \mathscr{S}(\Gamma_{[p,q]}(G))) = \op{Hom}_{\mathscr{O}_{X \times Y}}(\mathscr{F}, \mathscr{G}).
%]
%Then, we can define a induced morphism 
%\begin{gather*}
% \mathscr{S}^\text{fil} : \mathfrak{FM}\op{od}^{[p,q]}(A) \rightarrow  \mathfrak{FC}\op{oh}(X)\\
%(M=M_s \supset M_{s-1} \supset \cdots \supset M_0 = 0) \mapsto (\mathscr{S}(M)= \mathscr{S}(M_s) \supset \mathscr{S}(M_{s-1}) \supset \cdots \supset \mathscr{S}(M_0) = 0) 
%\end{gather*}
%because injections are sent to injections.
%Then, $\mathscr{S}(\phi)$ preserves the filtration from Lemma \ref{174036_2Jul21}.
%Thus, $\Gamma^\text{fil}_{[p,q]}(\mathscr{S}^{\op{fil}}(\phi)) = \phi$ because $\Gamma_{[p,q]}(\mathscr{S}(\phi)) = \phi$ when not considering the filtrations. 
%This completes the proof from \cite[Lemma 98.8.4]{stacks-project}.
\end{proof}

%\begin{lem}
%  $\Gamma^{\op{fil}}_{[p,q]}$ is injective on objects if $q \gg p \gg 0$.
%\end{lem}

%\begin{prop}
%$ \Gamma^{\op{fil}}_{[p,q]}$ is a monomorphism if $q \gg p \gg 0$.
%\end{prop}
%\begin{proof}
% For a morphism $f$ between algebraic stacks, $f$ is a monomorphism if and only if $f$ is fully faithful.
%\end{proof}

\subsubsection{\'Etale morphisms}

In this subsection, we prove that the morphism of algebraic stacks 
\[\Gamma^{\op{fil}}_{[p,q]} : \mathcal{FC}\op{oh}_{(\alpha_1, \cdots, \alpha_s)}(X) \rightarrow \mathcal{FM}\op{od}^{[p,q]}_{(\alpha_1, \cdots , \alpha_s)}(A)
\]
is \'etale. 
To prove this, we need to consider the deformation theories of $\mathcal{FC}\op{oh}_{(\alpha_1, \cdots, \alpha_s)}(X) $ and $\mathcal{FM}\op{od}^{[p,q]}_{(\alpha_1, \cdots , \alpha_s)}(A)$ and compare them.

First, we consider $\mathcal{FM}\op{od}^{[p,q]}_{(\alpha_1, \cdots , \alpha_s)}(A)$. $0=V^0 \subset V^1 \subset \cdots \subset V^s = V$ is a filtration of $\mathbb{N}$-graded $k$-vector spaces such that $\op{dim}_k V^i_j = \alpha_i(j)$, where $V^i := \oplus_j V^{i}_j $.
Then, 
\[
L_{[p,q]} := \bigoplus^{\infty}_{n=0} L^n_{[p,q]} := \bigoplus^{\infty}_{n=0} \op{Hom}_{k-\op{gr}}(\mathfrak{m}^{\otimes n}, \op{End}_{k}(V_{[p,q]})) = \op{Hom}_{k-\op{gr}}(\mathfrak{m}^{\otimes n}\otimes V_{[p,q]}, V_{[p,q]}  ) 
     \]
has a dgla structure by the Hochschild differential $d$ and the Gerstenhaber bracket $[\cdot,\cdot]$ (in detail, see \cite{behrend2014} or \cite{behrend2018moduli}). Note that when let $G_{[p,q]} := \op{GL}_{\op{gr}}(V_{[p,q]})$ (called the gauge group), we have an action of $G_{[p,q]}$ on $L_{[p,q]}$ called the gauge action.  However, in our situation, we need to consider the filtration of $V_{[p,q]}$. So, we consider the following graded vector space
\[
 L_{[p,q],-} := \bigoplus^{\infty}_{n=0} L^n_{[p,q],-} := \bigoplus^{\infty}_{n=0} \op{Hom}_{k-\op{gr}}(\mathfrak{m}^{\otimes n}, \op{End}_{k,-}(V_{[p,q]})) = \bigoplus^{\infty}_{n=0} \op{Hom}_{k-\op{gr}, -}(\mathfrak{m}^{\otimes n}\otimes V_{[p,q]}, V_{[p,q]} )
\]
(for the above notation, see subsection \ref{153759_23Jun21}). Note that $L_{[p,q],-}$ is a graded $k$-vector subspace of $L_{[p,q]}$. 
We can easily see Hochschild differential and the Gerstenhaber bracket of $L_{[p,q]}$ is closed in $L_{[p,q],-}$. 
So, $(L_{[p,q],-}, d, [\cdot,\cdot])$ is also a dgla. We also consider the parabolic subgroup $P_{[p,q]}$ of $G_{[p,q]}$ consisting of elements preserving the filtration of $V_{[p,q]}$. This also induces an action on $L_{[p,q],-}$. 
And, we have $\op{MC}(L_{[p,q],-}) = \{\mu \in \op{Hom}_{k\op{-gr}, -}(V_{[p,q]},V_{[p,q]}) \mid  \mu \text{ induces a filtered graded $A$-module structure }\} $. 
So, we have 
\begin{equation}
\label{163045_6Jul21}
[\op{MC}(L_{[p,q],-})/P_{[p,q]}] \simeq  \mathcal{FM}\op{od}^{[p,q]}_{(\alpha_1, \cdots , \alpha_s)}(A) 
\end{equation}
Moreover, since $L_{[p,q],-}$ is a dgla, we get a differential graded structure on $L_{[p,q],-}$ by using Theorem \ref{205012_23Jun21} and Example \ref{205047_23Jun21}. 
Because the Lie algebra of $P_{[p,q]}$ is $\op{Hom}_{k-\op{gr},-}(V_{[p,q]},V_{[p,q]})$, we can describe the tangent and obstruction spaces of a filtered graded $A$-module $(V_{[p,q]}, \mu)$ by describing the deformation theory of $[\op{MC}(L{[p,q],-})/P_{[p,q]}]$.
I.e., 
\begin{align*}
&\text{The infinitesimal deforamtion of $(V_{[p,q]}, \mu)$ is classified by } H^1((L_{[p,q],-}, d^\mu, [\cdot, \cdot])), \\
&\text{The obstructions of the deformation of $(V_{[p,q]}, \mu)$ are contained in } H^2((L_{[p,q],-}, d^\mu, [\cdot,\cdot]))
\end{align*}
, where $(L_{[p,q],-}, d^\mu, [\cdot, \cdot])$ is a dgla whose differential $d^{\mu}$ is defined by $d + [\mu, \cdot]$ (for detail, see \cite{behrend2014} or \cite{behrend2018moduli}). 
Although the right-hand sides of the above equalities are equal to the Hochschild cohomology, we need a lemma to associate them with $\op{Ext}_{A-\op{gr},-}(-, -)$ functor (see Remark \ref{205937_25Jun21}).

\begin{dfn}
(for the case of ungraded filtered modules, see \cite{illusie1971}, \cite{kremer2014}, \cite{nastasescu2006graded})
 Let $P$ be a filtered graded $A$-module. Then, $P$ is filtered projective if $\op{gr}_i(P)$ is projective object in the category of graded $A$-modules for any $i$.

Let $M$ be a filtered graded $A$-module. Then, a filtered graded projective resolution of $M$ is a sequence 
\[
 \cdots \rightarrow P^1 \rightarrow P^0 \rightarrow M \rightarrow 0
\]
such that the induced sequences 
\[
 \cdots \rightarrow \op{gr}_i(P^1) \rightarrow \op{gr}_i(P^0) \rightarrow \op{gr}_i(M) \rightarrow 0
\]
are projective resolutions of $\op{gr}_i(M)$ in the category of graded $A$-module.
\end{dfn}
\begin{rmk}
 We have graded versions of Definition \ref{160543_25Jun21}, \ref{160607_25Jun21}, \ref{160617_25Jun21} as above. And, we can define a graded version $\op{Ext}^i_{A-\op{gr}, -}(-, -)$ of the filtered Ext functor $\op{Ext}^i_{A-gr,-}(-, -)$ in Definition \ref{161700_25Jun21} and have a graded version of Theorem \ref{161710_25Jun21}.
\label{205937_25Jun21}
\end{rmk}

\begin{lem}
 For filtered graded $A$-modules $M, N$, we can calculate $\op{Ext}_-(M, N)$ by filtered graded projective resolutions of $M$. I.e, we have
\[
 \op{Ext}^i_{A-\op{gr}, -}(M,N) = H^i(\op{Hom}_{A\op{-gr},-}(P^\bullet, N))
\] 
, where $P^\bullet \rightarrow M \rightarrow 0$ is any filtered graded projective resolution of $M$.
\label{165514_26Jun21}
\end{lem}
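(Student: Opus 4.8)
The plan is to run the classical ``balancing'' argument for derived functors, adapted to the filtered graded setting via a double complex. Choose a filtered graded projective resolution $P^\bullet \to M \to 0$ of $M$ (one exists by a filtered version of the horseshoe lemma: the category of graded $A$-modules has enough projectives, and one lifts a choice of projective resolutions of the $\op{gr}_i(M)$ to a filtered resolution by induction on the length of the filtration, as in \cite{nastasescu2006graded}) and a filtered graded injective resolution $N \to I^\bullet$, which computes $\op{Ext}^\bullet_{A\op{-gr},-}(M,N)$ by definition. Form the first-quadrant double complex $K^{p,q} := \op{Hom}_{A\op{-gr},-}(P^p, I^q)$ with differentials induced by those of $P^\bullet$ and $I^\bullet$, and consider its two spectral sequences; since $K^{\bullet,\bullet}$ is concentrated in the first quadrant, both converge to $H^\bullet(\op{Tot}(K^{\bullet,\bullet}))$ with no convergence issues.

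First take cohomology in the $I$-direction, with $p$ fixed. Then $H^q(\op{Hom}_{A\op{-gr},-}(P^p, I^\bullet)) = \op{Ext}^q_{A\op{-gr},-}(P^p, N)$, and I claim this vanishes for $q > 0$. Indeed, by the graded analogue of Theorem \ref{161710_25Jun21} (see Remark \ref{205937_25Jun21}) there is a spectral sequence with $E_1^{a,b} = \prod_i \op{Ext}^{a+b}_{A\op{-gr}}(\op{gr}_i(P^p), \op{gr}_{i-a}(N))$ converging to $\op{Ext}^{a+b}_{A\op{-gr},-}(P^p,N)$; since each $\op{gr}_i(P^p)$ is projective in the category of graded $A$-modules, $E_1^{a,b}=0$ unless $a=b=0$, so $\op{Ext}^q_{A\op{-gr},-}(P^p,N)=0$ for $q>0$ and equals $\op{Hom}_{A\op{-gr},-}(P^p,N)$ for $q=0$. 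Hence this spectral sequence degenerates and $H^\bullet(\op{Tot}(K^{\bullet,\bullet})) \cong H^\bullet(\op{Hom}_{A\op{-gr},-}(P^\bullet, N))$.

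Now take cohomology in the $P$-direction, with $q$ fixed. The complex $\cdots \to P^1 \to P^0 \to M \to 0$ is \emph{strict exact}, meaning that for every $i$ the associated graded complex $\cdots \to \op{gr}_i(P^1) \to \op{gr}_i(P^0) \to \op{gr}_i(M) \to 0$ is exact (by definition of a filtered graded projective resolution it is a projective resolution of $\op{gr}_i(M)$). Applying $\op{Hom}_{A\op{-gr},-}(-, I^q)$ to a strict exact sequence of filtered graded modules yields an exact sequence whenever $I^q$ is filtered injective: this again follows from the graded filtered Ext spectral sequence, since the vanishing of $\op{Ext}^{>0}_{A\op{-gr}}(-,\op{gr}_j(I^q))$ forces $\op{Hom}_{A\op{-gr},-}(-, I^q)$ to be exact on strict exact sequences. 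Therefore $H^p(\op{Hom}_{A\op{-gr},-}(P^\bullet, I^q)) = 0$ for $p>0$ and $= \op{Hom}_{A\op{-gr},-}(M, I^q)$ for $p=0$, so this spectral sequence degenerates and $H^\bullet(\op{Tot}(K^{\bullet,\bullet})) \cong H^\bullet(\op{Hom}_{A\op{-gr},-}(M, I^\bullet)) = \op{Ext}^\bullet_{A\op{-gr},-}(M,N)$. Comparing the two computations of the total cohomology proves the lemma.

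The technical heart of the argument is the ``strict exactness'' bookkeeping: everything in the filtered world must be phrased through the associated graded, and the two facts that actually need checking are (a) filtered graded projective resolutions exist and are automatically strict, and (b) a filtered projective (resp.\ injective) module has no higher filtered Ext against anything, equivalently $\op{Hom}_{A\op{-gr},-}(P,-)$ (resp.\ $\op{Hom}_{A\op{-gr},-}(-,I)$) preserves strict exact sequences. Both are most cleanly deduced from the graded version of Theorem \ref{161710_25Jun21}; alternatively, since the filtrations considered here are finite, one may use the structural fact that a filtered graded projective $P$ splits as $\bigoplus_i \op{gr}_i(P)$ placed in the appropriate filtration degrees, which makes (a) and (b) transparent. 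Once these are in hand, the double-complex comparison is purely formal.
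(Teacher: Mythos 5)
Your proof is correct in substance but follows a genuinely different route from the paper. The paper filters the single complex $\op{Hom}_{A\op{-gr},-}(P^\bullet,N)$, obtains a spectral sequence via \cite[Theorem~5.5.1]{weibel1994introduction}, computes its $E_1$ page using \cite[I, Lemma~6.4]{nastasescu2006graded} (this is the hard step, occupying most of the proof of Claim~\ref{181958_29Jun21} as well), and then matches that $E_1$ page against the one coming from the injective-side definition (Theorem~\ref{161710_25Jun21} / Remark~\ref{205937_25Jun21}) to conclude the abutments agree. You instead run the classical balancing argument through the first-quadrant double complex $K^{p,q}=\op{Hom}_{A\op{-gr},-}(P^p,I^q)$ and compare its two spectral sequences. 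Your approach buys a cleaner comparison: because both of your spectral sequences abut to $H^\bullet(\op{Tot} K)$, the isomorphism of the two $\op{Ext}$ computations is exhibited directly, whereas the paper's final step -- inferring agreement of abutments from agreement of $E_1$ pages of two separately constructed spectral sequences -- is implicitly using exactly such a double-complex comparison without spelling it out. In exchange, the paper's route produces as a by-product the description of $E_1$ in terms of $\op{Ext}$ of associated gradeds, which it reuses elsewhere.

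One small imprecision worth flagging: in the step showing $\op{Ext}^q_{A\op{-gr},-}(P^p,N)=0$ for $q>0$, your claim that ``$E_1^{a,b}=0$ unless $a=b=0$'' is not quite right. Since $\op{gr}_i(P^p)$ is projective, the terms $\op{Ext}^{a+b}_{A\op{-gr}}(\op{gr}_i(P^p),\op{gr}_{i-a}(N))$ vanish unless $a+b=0$, but $E_1^{a,-a}=\prod_i\op{Hom}_{A\op{-gr}}(\op{gr}_i(P^p),\op{gr}_{i-a}(N))$ can be nonzero for $a>0$ (these give the graded pieces of the natural filtration on $\op{Hom}_{A\op{-gr},-}(P^p,N)$ itself). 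The correct statement is that all surviving $E_1$-terms lie on the total-degree-$0$ diagonal; this still forces the abutment to vanish in positive total degree, so your conclusion and the rest of the argument go through unchanged.
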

\begin{proof}
 First, we construct a spectral sequence which is convergent to $H^i(\op{Hom}_{A-\op{gr}, -}(P^{\bullet}, N))$ by using the idea of the proof of \cite[Proposition 1.3]{drezet1985}. We have a natural filtration of a chain complex $C = \op{Hom}_{A-\op{gr},-}(P^\bullet, N)$ :
\[
\def\objectstyle{\scriptstyle}
\def\labelstyle{\scriptscriptstyle}
 \xymatrix@C=10pt{ C = F_0 C:& 0 \ar[r] & \op{Hom}_{A-\op{gr},-}(P^0, N) \ar[r]  & \op{Hom}_{A-\op{gr}, -}(P^1, N)  \ar[r]& \op{Hom}_{A-\op{gr}, -}(P^2, N)  \ar[r] & \cdots \\
F_1 C :& 0 \ar[r] & \op{Hom}_{A-\op{gr}, -1}(P^0, N) \ar[r] \ar@{^{(}->}[u] & \op{Hom}_{A-\op{gr}, -1}(P^1, N) \ar[r] \ar@{^{(}->}[u] & \op{Hom}_{A-\op{gr}, -1}(P^2, N) \ar[r] \ar@{^{(}->}[u] &\cdots \\
F_2 C :& 0 \ar[r] & \op{Hom}_{A-\op{gr}, -2}(P^0, N) \ar[r] \ar@{^{(}->}[u]& \op{Hom}_{A-\op{gr}, -2}(P^1, N) \ar[r] \ar@{^{(}->}[u]& \op{Hom}_{A-\op{gr}, -2}(P^2, N) \ar[r] \ar@{^{(}->}[u]& \cdots \\
&&  \vdots \ar@{^{(}->}[u] & \vdots \ar@{^{(}->}[u] & \vdots  \ar@{^{(}->}[u]
}
\]
,where $\op{Hom}_{A-\op{gr}, -i}(P^j, N) = \{f \in \op{Hom}_{A-\op{gr}}(P^j , N) \mid  f((P^j)_k) \subset N_{k-i} \} (i \in \mathbb{N})$. Although filtrations in Section \ref{153759_23Jun21} are ascending, note that the above filtration is descending. So, we have a spectral sequence by \cite[Theorem 5.5.1]{weibel1994introduction} as follows :
\[
 H^{p+q}(\op{Hom}_{A\op{-gr}, -}(P^\bullet, N)) \Leftarrow E^{pq}_1 := \begin{cases} H^{p+q}(F_p C / F_{p+1} C) & p \geq 0 \\
0 & p < 0
\end{cases}.
\]
Because $P^i$ is filtered graded projective $A$-module, by \cite[I, Lemma 6.4]{nastasescu2006graded}, we have $F_pC/F_{p+1}C $ is isomorphic to $\oplus_{i}\op{Hom}_{A-\op{gr}}(\op{gr}_i(P^\bullet), \op{gr}_{i-p}(N))$ if $p \geq 0$. So, we have 
\[
  H^{p+q}(F_p C/F_{p+1} C) = \oplus_i \op{Ext}^{p+q}_{A-\op{gr}}(\op{gr}_i (M), \op{gr}_{i-p} (N)) \quad \text{if } p \geq 0.
\]
We have a spectral sequence converging to $H^{p+q}(\op{Hom}_{A-\op{gr}, -}(P^\bullet, N))$ and $ \op{Ext}_{A-\op{gr}, -}(M,N)$ (Theorem \ref{161710_25Jun21}, Remark \ref{205937_25Jun21}). Thus, we have  $H^{i}(\op{Hom}_{A-\op{gr}, -}(P^\bullet, N)) \simeq \op{Ext}^i_{A-\op{gr}, -}(M,N)$.
\end{proof}

We can describe the tangent space and the obstruction space of filtered graded $A$-module $M$ from the above discussion and Lemma \ref{165514_26Jun21}.

\begin{prop}
\label{173918_1Jul21}
 For any filtered graded $A$-module $M$, then
\begin{align*}
&\text{The infinitesimal deforamtion of $M$ is classified by }  \op{Ext}^1_{A-\op{gr}, -}(M,M), \\
&\text{The obstructions of the deformation of $M$ are contained in } \op{Ext}^2_{A-\op{gr}, -}(M,M).
\end{align*} 
\end{prop}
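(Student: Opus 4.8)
The plan is to reduce the statement for the filtered graded $A$-module $M=(V_{[p,q]},\mu)$ to the already-established deformation theory of the quotient stack $[\op{MC}(L_{[p,q],-})/P_{[p,q]}]$. Recall from the discussion preceding the proposition that $M$ corresponds to a Maurer--Cartan element $\mu\in L^1_{[p,q],-}$, that the infinitesimal deformations of $M$ (as a point of the stack $\mathcal{FM}\op{od}^{[p,q]}_{(\alpha_1,\dots,\alpha_s)}(A)$) are classified by $H^1$ of the twisted dgla $(L_{[p,q],-},d^\mu,[\cdot,\cdot])$, and that the obstructions live in $H^2$ of the same complex; this is the standard dictionary between dglas (with gauge group) and their associated derived/classical deformation functors, invoked via Theorem \ref{205012_23Jun21} and Example \ref{205047_23Jun21} together with the identification \eqref{163045_6Jul21} and the computation of $\op{MC}(L_{[p,q],-})$ and of $\op{Lie}(P_{[p,q]})$. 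So what remains is purely homological: to identify $H^i\big((L_{[p,q],-},d^\mu,[\cdot,\cdot])\big)$ with $\op{Ext}^i_{A\text{-}\op{gr},-}(M,M)$ for $i=1,2$.

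First I would observe that, by construction, $(L^\bullet_{[p,q],-},d^\mu)$ is the filtered-$\op{Hom}$ complex computing filtered graded Hochschild cohomology: $L^n_{[p,q],-}=\op{Hom}_{k\text{-}\op{gr},-}(\mathfrak{m}^{\otimes n}\otimes V_{[p,q]},V_{[p,q]})$ with the Hochschild differential twisted by $\mu$, exactly as in \cite{behrend2014} but with all Hom-spaces taken in the filtered category. The key point is that the bar-type resolution of $M$ over (the truncation of) $A$, built from $\mathfrak{m}^{\otimes\bullet}\otimes M$, is a filtered graded projective resolution of $M$: each term is a sum of modules induced up from a graded projective, hence filtered projective in the sense of the definition preceding Lemma \ref{165514_26Jun21}, and the associated-graded of this resolution is the corresponding bar resolution of $\op{gr}(M)$, which is a graded projective resolution of $\op{gr}(M)$ degreewise. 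Therefore $\op{Hom}_{A\text{-}\op{gr},-}$ of this filtered projective resolution into $M$ is precisely $(L^\bullet_{[p,q],-},d^\mu)$.

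With that in hand I would invoke Lemma \ref{165514_26Jun21}, which says filtered graded $\op{Ext}$ may be computed from any filtered graded projective resolution of the first argument, to conclude
\[
H^i\big((L_{[p,q],-},d^\mu,[\cdot,\cdot])\big)=H^i\big(\op{Hom}_{A\text{-}\op{gr},-}(P^\bullet,M)\big)=\op{Ext}^i_{A\text{-}\op{gr},-}(M,M)
\]
for all $i$. Combining this with the dgla deformation-theory statement recalled above gives exactly the proposition. Here one should be a little careful about the bounded-degree $[p,q]$ truncation: the relevant algebra acting is the truncation of $A$ (or $\mathfrak{m}$) to degrees contributing in $[p,q]$, and the bar complex $\bigoplus_n \mathfrak{m}^{\otimes n}\otimes V_{[p,q]}$ is finite in each internal degree, so no convergence issue arises and the resolution is genuinely a (finite-length-in-each-degree) filtered graded projective resolution in $\op{Mod}^{[p,q]}$.

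I expect the main obstacle to be the verification that the twisted bar complex is genuinely a \emph{filtered} projective resolution --- i.e. that passing to associated graded turns it into an honest graded projective resolution of $\op{gr}(M)$, and that the filtered Hom-complex of it reproduces $(L_{[p,q],-},d^\mu)$ on the nose including the twist by $\mu$. This is where the filtered homological algebra of \S\ref{153759_23Jun21} (strictness of the relevant short exact sequences, \cite[I, Lemma 6.4]{nastasescu2006graded}, and the compatibility of $\op{gr}$ with tensor induction) has to be applied carefully; once that is settled, everything else is a direct translation via Theorem \ref{205012_23Jun21}, Example \ref{205047_23Jun21}, \eqref{163045_6Jul21}, and Lemma \ref{165514_26Jun21}.
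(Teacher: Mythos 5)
Your proposal matches the paper's own proof: both reduce the statement to identifying the cohomology of the twisted dgla $(L_{[p,q],-},d^\mu)$ with $\op{Ext}^i_{A\text{-}\op{gr},-}(M,M)$, both do so by recognizing the Hochschild cochain complex as $\op{Hom}_{A\text{-}\op{gr},-}$ of a bar-type filtered graded projective resolution of $M$ (the paper writes it as $B(A,A)\otimes_A M$, citing \cite[Lemma 8.1]{nastasescu2006graded} and \cite[Lemma 9.1.9]{weibel1994introduction}), and both then conclude by Lemma \ref{165514_26Jun21}. The only difference is cosmetic: you emphasize more explicitly the verification that the bar complex is a genuine filtered projective resolution and the finiteness-in-each-degree arising from the $[p,q]$-truncation, but the argument is the same.
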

\begin{proof}
This is obtained from Lemma \ref{165514_26Jun21} and the following isomorphisms: 
\begin{small}
 \begin{align*}
\op{Hom}_{k-\op{gr}}(A^{\otimes \bullet}, \op{Hom}_{k}(M, M)) &\simeq
 \op{Hom}_{A-A-\op{gr}}(B(A,A), \op{Hom}_{k,-}(M,M)) \\
&\simeq \op{Hom}_{A-k-\op{gr},-}(B(A,A) \otimes_A M, M)
\end{align*}
\end{small}
, where $B(A,A)$ is the Bar resilution of $A$.
Note that we have the above from \cite[Lemma 8.1]{nastasescu2006graded} and a property of Bar resolution and $B(A,A)\otimes_A M \rightarrow M \rightarrow 0$ is a filtered graded projective resolution of $M$
(for the non filtered graded case, see \cite[Lemma 9.1.9]{weibel1994introduction}).
%%%%%%%%%%%In addition to the above discussion and Lemma \ref{165514_26Jun21}, note that $((A_{>0})^{\otimes \bullet} \otimes M \rightarrow M \rightarrow 0)$ is a filtered graded free resolution of $M$.
%%%%%%%%%%%%% If let $M$ be a filtered graded $A$-module, we have the 'Tensor-Hom adoint' between the category of the filtered graded $A$-$A$ modules and that of the filtered graded $A$-$k$ modules (see \cite[Lemma 8.1]{nastasescu2006graded}). 
%In our situation, we have 
% \[\op{Hom}_{A-A-\op{gr},-}(B(A,A), \op{Hom}_{k,-}(M,M)) \simeq \op{Hom}_{A-k-\op{gr}}(B(A,A) \otimes_A M, M) ,\]
%where $B(A,A)$ is the Bar resolution of $A$ and $B(A,A)\otimes_A M \rightarrow M \rightarrow 0$ is a filtered graded projective resolution of $M$. 
%And note that $A$ and $M$ are thought as $A-A$ (filtered) graded bimodules by their (filtered) graded $A$-module structures.
%For any Hochschild cochain, a cochain complex which is quasi-isomorphism to it can be constructed by using Bar resolutions. We also describe this fact in our situation in the following:
%\[\op{Hom}_{k-\op{gr}, -}(A^{\otimes \bullet}, \op{Hom}_{k, -}(M, M)) \simeq \op{Hom}_{A-A-\op{gr},-}(B(A, A), \op{Hom}_{k,-}(M,M))\]
 %Thus, from Lemma \ref{165514_26Jun21} and the above formulas, we have the conclusion (for the non filtered graded case, the proof are in \cite[Lemma 9.1.9]{weibel1994introduction}).
\end{proof}

Next, we compare the deformation theories of filtered sheaves on $X$ and filtered graded $A$-modules.
We use the method of \cite{ciocan2001} where them of sheaves on $X$ and graded $A$-modules are compared.

 \begin{prop}
(\cite{walter1995components})
\label{173948_1Jul21}
  For any filtered coherent sheaf $\mathscr{F}$, we can describe the deformation theory of $F$ as follows:
 \begin{align*}
&\text{The infinitesimal deformation of $\mathscr{F}$ is classified by }\op{Ext}^1_{-}(\mathscr{F}, \mathscr{F}), \\
&\text{The obstructions of the deformation of $\mathscr{F}$ are contained} = \op{Ext}^2_{-}(\mathscr{F}, \mathscr{F}).
\end{align*}
 \end{prop}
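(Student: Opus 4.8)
The plan is to reduce the statement to the case of ordinary (unfiltered) coherent sheaves, which is the content of \cite{walter1995components}, and then to transport the deformation theory through the associated graded. First I would recall the standard fact that for a single coherent sheaf $\mathscr{G}$ on $X$, the deformation functor is governed by $\op{Ext}^1_{\mathscr{O}_X}(\mathscr{G},\mathscr{G})$ (tangent) and $\op{Ext}^2_{\mathscr{O}_X}(\mathscr{G},\mathscr{G})$ (obstruction); this is precisely Proposition \ref{173948_1Jul21} in the non-filtered case, i.e. \cite{walter1995components}. The filtered version should then follow by a dévissage along the filtration $0 = \mathscr{F}^0 \subset \mathscr{F}^1 \subset \cdots \subset \mathscr{F}^s = \mathscr{F}$, carried out in a way parallel to the module-theoretic Lemma \ref{165514_26Jun21} and Proposition \ref{173918_1Jul21}: there one builds a descending filtration on the Hom-complex computing $\op{Ext}_{-}$ whose graded pieces are $\bigoplus_i \op{Ext}_{\mathscr{O}_X}(\op{gr}_i(\mathscr{F}),\op{gr}_{i-p}(\mathscr{F}))$, and whose associated spectral sequence (Theorem \ref{161710_25Jun21}) relates the filtered $\op{Ext}$ to the ordinary $\op{Ext}$ of the graded pieces. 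The same spectral-sequence mechanism, applied to the cotangent/deformation complex of $\mathscr{F}$ rather than to a Hom-complex of projectives, is what I would use here.

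Concretely, the key steps in order are: (1) identify the deformation functor of a filtered coherent sheaf with the deformation functor controlled by the filtered dgla $\op{RHom}_{-}(\mathscr{F},\mathscr{F})$ — this is the exact analogue of the identification used for filtered graded modules via $L_{[p,q],-}$, and uses that preserving the filtration is a closed condition under the bracket and differential, just as noted before \eqref{163045_6Jul21}; (2) show that $H^1$ and $H^2$ of this filtered dgla are $\op{Ext}^1_{-}(\mathscr{F},\mathscr{F})$ and $\op{Ext}^2_{-}(\mathscr{F},\mathscr{F})$ in the sense of Definition \ref{161700_25Jun21}, by comparing the filtered complex computing the dgla cohomology with a filtered injective resolution and invoking the spectral sequence of Theorem \ref{161710_25Jun21}; (3) invoke the unfiltered result \cite{walter1995components} for each $\op{gr}_i(\mathscr{F})$ to see that the $E_1$-page of the spectral sequence consists of honest $\op{Ext}$ groups and that the filtration argument is consistent — i.e. infinitesimal filtered deformations are classified by $\op{Ext}^1_{-}$ and obstructions land in $\op{Ext}^2_{-}$. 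Steps (1)--(3) are formally the mirror image, on the sheaf side, of the chain of results Lemma \ref{165514_26Jun21} $\Rightarrow$ Proposition \ref{173918_1Jul21} on the module side.

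The main obstacle I expect is \textbf{step (1)}: making precise, on the sheaf side, the identification of the filtered deformation functor with a filtered dgla and verifying that the filtration-preservation constraint is compatible with the $L_\infty$/dgla structure. For graded $A$-modules this was clean because one had an explicit Hochschild model $L_{[p,q],-} \subset L_{[p,q]}$ and an explicit parabolic subgroup $P_{[p,q]}$, so $\op{MC}$ and the quotient stack were transparent. For filtered sheaves one does not have such a finite-dimensional model at hand, so one must either argue abstractly (via the derived category of filtered sheaves and a filtered enhancement of $\op{RHom}$, using Definitions \ref{160543_25Jun21}--\ref{160617_25Jun21} and \ref{161700_25Jun21}) or else postpone the honest justification to the étale comparison with $\mathcal{FM}\op{od}^{[p,q]}_{(\alpha_1,\cdots,\alpha_s)}(A)$ that the following subsection carries out — in which case the present proposition is best read as recording what \cite{walter1995components} gives, combined with the filtered dévissage, with the dgla bookkeeping deferred. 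I would therefore present the proof as: cite \cite{walter1995components} for the graded pieces, then run the filtered Ext spectral sequence (Theorem \ref{161710_25Jun21}) exactly as in Lemma \ref{165514_26Jun21}, to conclude that the filtered deformation problem is controlled by $\op{Ext}^1_-(\mathscr{F},\mathscr{F})$ and $\op{Ext}^2_-(\mathscr{F},\mathscr{F})$.
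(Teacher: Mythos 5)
The paper does not prove this proposition at all: it is stated with the citation \cite{walter1995components} and no proof is supplied, because the deformation theory of filtered sheaves via the filtered Ext groups $\op{Ext}^i_-$ is taken as known (it is also treated by Drezet in \cite{drezet1985} and in \cite{huybrechts2010geometry}). So the only honest comparison is between your proposed argument and what Walter actually does.

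Your proposal has one genuine logical gap that you partly sense yourself. The spectral sequence of Theorem \ref{161710_25Jun21}, which you want to ``run exactly as in Lemma \ref{165514_26Jun21},'' relates the filtered Ext groups $\op{Ext}^i_-(\mathscr{F},\mathscr{F})$ to the ordinary Ext groups of the graded pieces $\op{gr}_i\mathscr{F}$. It is a statement about the homological algebra of the filtered category, not a statement about the filtered deformation functor. Knowing that the $E_1$-page consists of honest Ext groups (your step (3)) tells you something about the \emph{size and structure} of $\op{Ext}^1_-$ and $\op{Ext}^2_-$; it does not tell you that isomorphism classes of flat filtered first-order deformations of $\mathscr{F}$ are in bijection with $\op{Ext}^1_-(\mathscr{F},\mathscr{F})$, nor that the obstruction to extending a filtered deformation along a small extension lands in $\op{Ext}^2_-(\mathscr{F},\mathscr{F})$. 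That is a separate deformation-theoretic assertion which one proves directly: for a local Artin ring $A$ with a square-zero extension $A' \to A$, one trivializes the filtration locally on an affine cover, expresses a filtered deformation as gluing data lying in the parabolic subalgebra (or subsheaf) preserving the filtration, and reads off the Kodaira--Spencer class and the obstruction class as filtered \v{C}ech classes. In other words, your step (1) is not something you can defer or dévissage away; it is precisely the content of Walter's proposition. The dévissage and spectral sequence are tools the present paper uses later (Lemma \ref{165514_26Jun21}, Claim \ref{181958_29Jun21}) to \emph{compute} $\op{Ext}_-$ and compare it with the module side, not to establish what it controls.

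Your instinct that the filtered-dgla model is the sticking point on the sheaf side is correct, and your closing suggestion --- that the proposition is ``best read as recording what \cite{walter1995components} gives'' --- is in fact exactly how the paper handles it. If you want an actual proof rather than a citation, the route is the \v{C}ech/torsor argument sketched above (as in the appendix to Chapter 2 of \cite{huybrechts2010geometry}), not the spectral sequence. The spectral sequence should be dropped from the argument for this proposition; it belongs to the computation in Lemma \ref{173516_29Sep21}, not here.
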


\begin{lem}
\label{173516_29Sep21}
 \begin{itemize}
  \item[(a)] If $\mathscr{F}, \mathscr{G}$ are filtered coherent sheaves on $X$, then 
\[
  \op{Ext}^i_{-}(\mathscr{F}, \mathscr{G}) = \operatorname*{lim}_{\underset{p}{\rightarrow}} \op{Ext}^i_{A-\op{gr}, -}(\Gamma^{\op{fil}}_{\geq p}(\mathscr{F}), \Gamma^{\op{fil}}_{\geq p} (\mathscr{G})). 
\] 
\item[(b)] If $M, N$ are finitely generated filtered graded $A$-module and take a nonnegative integer $p$, then for any $q \gg 0$, we have 
\[
 \op{Ext}^i_{A-\op{gr},-}(M,N) = \op{Ext}^i_{A-\op{gr}, -}(M_{\leq q}, N_{\leq q}).
\]
	   Moreover, if let $Y$ be a projective scheme and let  $\mathscr{M}, \mathscr{N}$ be filtered graded $A \otimes \mathscr{O}_{Y}$-modules whose coponents are locally free sheaves on $Y$, then we can choose a $q$ such that for any pair $(\mathscr{M}_{y}, \mathscr{N}_{y}) (y \in Y)$, the above equality holds.
 \end{itemize}
\end{lem}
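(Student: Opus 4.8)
The plan is to treat the two parts separately but with a common strategy: reduce each statement about filtered Ext to the corresponding (known, non-filtered) statements about graded Ext by invoking the spectral sequences of Theorem~\ref{161710_25Jun21} and Remark~\ref{205937_25Jun21}, which degenerate the filtered Ext groups into finitely many graded pieces $\op{Ext}^\bullet_{A\text{-}\op{gr}}(\op{gr}_i(-),\op{gr}_j(-))$. The point is that all the limit/stabilization phenomena happen at the level of the associated graded objects, where they are classical, and the spectral sequences are compatible with the relevant maps (the adjunction counit for (a), the truncation functor for (b)), so one can pass the statement through termwise.

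For part (a), first I would recall from Ciocan-Fontanine--Kapranov (and \cite{behrend2014}) the non-filtered statement: for coherent sheaves $\mathscr E,\mathscr G$ on $X$ one has $\op{Ext}^i_{\mathscr O_X}(\mathscr E,\mathscr G)\simeq \operatorname*{colim}_p \op{Ext}^i_{A\text{-}\op{gr}}(\Gamma_{\geq p}(\mathscr E),\Gamma_{\geq p}(\mathscr G))$, where $\Gamma_{\geq p}(\mathscr E)=\bigoplus_{i\geq p}\Gamma(X,\mathscr E(i))$; this follows because $\mathscr S\Gamma_{\geq p}\simeq \op{id}$ for $p\gg 0$ and $\mathscr S$ is exact with the right Ext-compatibility. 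Then I would observe that $\Gamma^{\op{fil}}_{\geq p}$ takes a filtered coherent sheaf to a filtered graded $A$-module whose associated graded pieces are exactly $\Gamma_{\geq p}(\op{gr}_i(\mathscr F))$ (pushforward along the inclusions is exact in high degree because each $\op{gr}_i(\mathscr F)$ is $p$-regular for $p\gg 0$), so the filtered-Ext spectral sequence of the right-hand side has $E_1$-term $\prod_i \op{Ext}^{p+q}_{A\text{-}\op{gr}}(\Gamma_{\geq p}(\op{gr}_i\mathscr F),\Gamma_{\geq p}(\op{gr}_{i-p}\mathscr G))$. Taking the colimit over $p$ of this spectral sequence — colimits are exact, so this is legitimate — and using the non-filtered identification termwise, the colimit spectral sequence becomes the filtered-Ext spectral sequence of Theorem~\ref{161710_25Jun21} computing $\op{Ext}^i_{-}(\mathscr F,\mathscr G)$. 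A comparison of the two spectral sequences (the natural map is the one induced by the adjunction) then gives the isomorphism on abutments.

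For part (b), the non-filtered input is again standard: for finitely generated graded $A$-modules $M,N$ and $p$ fixed, $\op{Ext}^i_{A\text{-}\op{gr}}(M,N)\simeq\op{Ext}^i_{A\text{-}\op{gr}}(M_{\leq q},N_{\leq q})$ for $q\gg 0$, the truncation being taken in degrees $\leq q$; this is proved by choosing a graded projective (free) resolution of $M$ by finitely generated modules, noting that $\op{Hom}_{A\text{-}\op{gr}}(P^\bullet,N)$ in each homological degree stabilizes once $q$ exceeds the top generator degrees appearing in $P^0,\dots,P^{i+1}$ and the top degree of $N$ relevant to those. For the filtered statement I would take a \emph{filtered} graded projective resolution $P^\bullet\to M$ with each $P^\bullet$ finitely generated (possible since $M$ is finitely generated and each $\op{gr}_j M$ is then finitely generated), apply Lemma~\ref{165514_26Jun21} to compute $\op{Ext}^i_{A\text{-}\op{gr},-}(M,N)=H^i(\op{Hom}_{A\text{-}\op{gr},-}(P^\bullet,N))$, and then run the same stabilization argument one filtration-degree at a time, using that $\op{Hom}_{A\text{-}\op{gr},-r}(P^j,N)$ only sees finitely many graded degrees. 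Choosing $q$ larger than all the finitely many bounds (over all $i\leq$ the relevant range, all filtration indices $r$, and all $j\leq i+1$) gives a single $q$ that works. For the relative statement over a projective scheme $Y$, I would use that the components of $\mathscr M,\mathscr N$ are locally free and that a filtered graded resolution can be chosen by finitely generated $A\otimes\mathscr O_Y$-modules with locally free components after shrinking is unnecessary since finite generation is a global condition; the degree bounds are then given by the (finitely many) generator degrees of the resolution and the top degree of $\mathscr N$, all of which are independent of $y\in Y$, so the same $q$ specializes fiberwise — here one also uses that forming $\op{Hom}_{A\text{-}\op{gr},-}(P^\bullet,\mathscr N)$ commutes with the base change $y\mapsto \op{Spec}k(y)$ because the $P^j$ are locally free and finitely generated.

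The main obstacle I anticipate is not any single step but the bookkeeping of \emph{uniformity} in part (b): one must check that the bound on $q$ can be chosen independently of $y$ in the relative situation, which requires that the filtered graded projective resolution (with locally free, finitely generated components) and its behavior under fiberwise base change be controlled simultaneously in all homological degrees $\leq i+1$ and all filtration degrees. This is where the hypotheses "finitely generated" and "locally free components" do the real work, and assembling the finitely many degree bounds into one $q$ — while making sure the base-change isomorphism $\op{Hom}(P^\bullet,\mathscr N)\otimes k(y)\simeq\op{Hom}(P^\bullet_y,\mathscr N_y)$ is exact — is the delicate part. Part (a), by contrast, is essentially a formal consequence of the non-filtered colimit identification together with exactness of filtered colimits applied to the spectral sequence of Theorem~\ref{161710_25Jun21}.
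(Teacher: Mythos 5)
Your plan for part~(a) is genuinely different from the paper's argument, and as stated it has a gap. You propose to compare the filtration spectral sequence of Theorem~\ref{161710_25Jun21} computing $\op{Ext}^i_-(\mathscr F,\mathscr G)$ with the colimit over $p$ of the corresponding spectral sequences computing $\op{Ext}^i_{A\text{-}\op{gr},-}(\Gamma^{\op{fil}}_{\geq p}(\mathscr F),\Gamma^{\op{fil}}_{\geq p}(\mathscr G))$, by noting they have isomorphic $E_1$-pages after the non-filtered colimit identification. But two spectral sequences with isomorphic $E_1$-pages need not have isomorphic abutments: you need a genuine \emph{morphism of spectral sequences}, i.e.\ a map of filtered complexes, inducing that identification and then to invoke a comparison theorem. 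You wave at ``the natural map induced by the adjunction,'' but that map a priori lives between the abutments; building it as a morphism of filtered complexes compatible with the two filtrations is exactly the work to do, and you do not do it. The paper sidesteps the comparison entirely by a more hands-on device: it takes a single filtered injective resolution $I^\bullet$ of $\Gamma^{\op{fil}}(\mathscr G)$ in graded $A$-modules, shows its sheafification $\tilde{I^\bullet}$ is a filtered injective resolution of $\mathscr G$ (passing through the Serre-type category $\mathcal S$), and then computes \emph{both} sides from this one resolution. Even then there is a nontrivial step you do not anticipate: the truncations $I^\bullet_{\geq p}$ are no longer filtered injective as graded $A$-modules, so it is not automatic that $H^i(\op{Hom}_{A\text{-}\op{gr},-}(\Gamma^{\op{fil}}_{\geq p}(\mathscr F),I^\bullet_{\geq p}))$ equals $\op{Ext}^i_{A\text{-}\op{gr},-}$; the paper proves this in a separate Claim with a filtration-of-the-Hom-complex spectral sequence. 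So part~(a) is not the ``essentially formal consequence'' you suggest at the end.

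For part~(b) your plan is closer to the paper's but has the same kind of gap. Your stabilization argument correctly gives $H^i(\op{Hom}_{A\text{-}\op{gr},-}(P^\bullet,N))=H^i(\op{Hom}_{A\text{-}\op{gr},-}(P^\bullet_{\leq q},N_{\leq q}))$ for $q\gg 0$, and Lemma~\ref{165514_26Jun21} identifies the left side with $\op{Ext}^i_{A\text{-}\op{gr},-}(M,N)$. But to finish, you must identify the right side with $\op{Ext}^i_{A\text{-}\op{gr},-}(M_{\leq q},N_{\leq q})$, and this is \emph{not} an application of Lemma~\ref{165514_26Jun21}, because the truncated complex $P^\bullet_{\leq q}\to M_{\leq q}\to 0$ is not a filtered graded projective resolution (the $P^j_{\leq q}$ are not projective). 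The paper explicitly flags this (``which is not free'') and resolves it by rerunning the Hom-complex filtration spectral sequence on the truncated complex, degenerating the $E_1$-page into $\op{Ext}^{i+j}_{A\text{-}\op{gr}}(\op{gr}_k(M_{\leq q}),\op{gr}_{k-i}(N_{\leq q}))$ and invoking the Ciocan-Fontanine--Kapranov fact that graded $\op{Ext}$ can be computed from a truncated free resolution. Your proposal does not address this step at all. The uniformity issue you flag for the relative statement over $Y$ is real and you handle it the same way the paper does (finitely generated, locally free components, degree bounds independent of $y$), so that portion matches.
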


\begin{proof}
(a) Let $\mathcal{S}$ be a category whose objects are finitely generated $A$-modules and morphisms between two objects $M$ and $N$ are 
\[
 \op{Hom}_{\mathcal{S}}(M, N) := \op{lim}_{\underset{p}{\rightarrow}} \op{Hom}_{A-\op{gr}}(M_{\geq p}, N_{\geq p})
\]
(\cite[page.406-407]{ciocan2001}). Note that the category $\op{Coh}(X)$ is equivalent to the category $\mathcal{S}$ (see \cite[Theorem 1.2.2]{ciocan2001} or \cite{serre1955faisceaux}). 
If let $\op{Hom}_{\mathcal{S}, -}(M, N)$ be the subset of $\op{Hom}_{\mathcal{S}}(M, N)$ consisting of filtered preserving morphisms from  $M$ to  $N$.
Then, we have  $ \op{Hom}_{\mathcal{S}, -}(M, N) = \operatorname*{lim}_{\underset{p}{\rightarrow}}\op{Hom}_{A-\op{gr}, -}(M_{\geq p}, N_{\geq p})$.
%So, we have
%\[
%   \op{Hom}_{-}(\mathscr{F}, \mathscr{G}) = \op{Hom}_{-}(\overset{\sim}{\Gamma^{\op{fil}}(\mathscr{F})}, (\overset{\sim}{\Gamma^{\op{fil}}(\mathscr{G})})) = \op{Hom}_{\mathcal{S}, -}(\Gamma^{\op{fil}}(\mathscr{F}), (\Gamma^{\op{fil}}(\mathscr{G})))
%\]
%, where $\mathscr{F}, \mathscr{G}$ are filtered coherent sheaves on $X$ (see also the proof of Lemma \ref{224726_27Jun21}).
Let $0 \rightarrow \Gamma^{\op{fil}}(G) \rightarrow I^{\bullet}$ be a filtered injective resolution of $\Gamma^{\op{fil}}(G)$ and let $\mathscr{I}^i :=\tilde{I^i}$.
% then $0 \rightarrow \Gamma^{\op{fil}}(G) \rightarrow I^{\bullet}$ is also a filtered injective resolution in $\mathcal{S}$.
The natural functor from the $A-$graded modules to $\mathcal{S}$ is an exact functor(\cite[Theorem 6.7]{smith-gradedrings}). 
So, a filtered graded resolution in the category of graded $A$-modules is one in $\mathcal{S}$.  
Any injective object in the category of the graded $A$-modules is injective in $\mathcal{S}$ from a direct calculation and the definition of injections in $\mathcal{S}$.
It follows that  $0 \rightarrow \Gamma^{\op{fil}}(G) \rightarrow I^{\bullet}$ is also a filtered injective resolution in $\mathcal{S}$.
So, $0 \rightarrow \mathscr{G} \rightarrow \mathscr{I}^{\bullet}$ is a filtered injective resolution of $\mathscr{G}$.
Thus, we have
% Note that  we have 
%\[
% \op{Hom}(\overset{\sim}{M}, \overset{\sim}{
%N}) \simeq \op{Hom}_{\mathcal{S}}(M,N)
%\]
%,where $\overset{\sim}{M}$ and $\overset{\sim}{N}$ are respectively the associated sheaves on $X$ 
\begin{align*}
 \op{Ext}^i_{-}(\mathscr{F}, \mathscr{G}) &= H^i(\op{Hom}_{-}(\mathscr{F}, \mathscr{I}^{\bullet})) = H^i(\op{Hom}_{\mathcal{S}, -}(\Gamma^{\op{fil}}(\mathscr{F}), I^{\bullet}) \\
&= H^i (\operatorname*{lim}_{\underset{p}{\rightarrow}} \op{Hom}_{A-\op{gr}, -}(\Gamma^{\op{fil}}_{\geq p}(\mathscr{F}), I^{\bullet}_{\geq p})) \\
&= \operatorname*{lim}_{\underset{p}{\rightarrow}} H^i ( \op{Hom}_{A-\op{gr}, -}(\Gamma^{\op{fil}}_{\geq p}(\mathscr{F}), I^{\bullet}_{\geq p})) 
%&= \underset{\underset{p}{\rightarrow}}{\op{lim}} \op{Ext}^i_{\op{gr}, -}(\Gamma^{\op{fil}}_{\geq p}(\mathscr{F}), \Gamma^{\op{fil}}_{\geq p}(\mathscr{G}))
\end{align*}
In order to show (a), we have to prove the following claim.
\begin{cl}
\label{181958_29Jun21}
 For $i,p \geq 0$,
 \[
  H^i ( \op{Hom}_{A-\op{gr}, -}(\Gamma^{\op{fil}}_{\geq p}(\mathscr{F}), I^{\bullet}_{\geq p})) 
=  \op{Ext}^i_{A-\op{gr}, -}(\Gamma^{\op{fil}}_{\geq p}(\mathscr{F}), \Gamma^{\op{fil}}_{\geq p}(\mathscr{G}))
\]

\end{cl}
\begin{proof}
[Proof of Claim \ref{181958_29Jun21}]
For simplicity, $M := \Gamma^{\op{fil}}(\mathscr{F})$.% as filtered graded modules.
 First, we have a spectral sequence in the same way as in the proof of Lemma \ref{165514_26Jun21} :
\[
  H^{i+j}(\op{Hom}_{A-\op{gr}, -}(M_{\geq p}, I^{\bullet}_{\geq p})) \Leftarrow E^{ij}_1 := \begin{cases} H^{i+j}(F_i C_{\geq p} / F_{i+1} C_{\geq p}) & i \geq 0 \\
0 & i < 0
\end{cases}
\]
, where $F_iC_{\geq p} := \{0 \rightarrow \op{Hom}_{A-\op{gr}, -i}(M_{\geq p}, I^0_{\geq p}) \rightarrow  \op{Hom}_{A-\op{gr}, -i}(M_{\geq p}, I^1_{\geq p} ) \rightarrow \cdots \}$.
Then, we see that
\begin{equation}
   F_i C_{\geq p} / F_{i+1} C_{\geq p} =  \oplus_k \op{Hom}_{A-\op{gr}}(\op{gr}_k (M_{\geq p}), \op{gr}_{k-i} (I^\bullet_{\geq p})) \quad  \text{if } i \geq 0. \label{162727_1Jul21}
\end{equation}
To prove \ref{162727_1Jul21}, we use the idea of the proof of \cite[Chapitre V, Lemme 1.4.2]{illusie1971}.
First, note that each $I^{i}_{\geq p}$ is a direct sum of truncations of graded injective modules because each $I^i$ is a direct sum of graded injective modules.
When $0 = N^0 \subset N^1 \subset \cdots \subset N^t =: N$ and $0 = J^0 \subset J^1 \subset \cdots \subset J^t := J$ are filtered graded $A$-modules and $J^{i+1}/J^i$ are injective objects (so each $J^\bullet$ is a direct sum of injective objects), then we have 
\begin{align*}
  \op{Hom}_{A-\op{gr}, -}(N_{\geq p}, J_{\geq p}) & = \op{Hom}_{A-\op{gr}, -}(N_{\geq p}, J)\\
 &=  \oplus_{k=0} \op{Hom}_{A-\op{gr}}((N/N^{t-k-1})_{\geq p}, J^{t-k}/J^{t-k-1})\\
& =  \oplus_{k=0} \op{Hom}_{A-\op{gr}}((N/N^{t-k-1})_{\geq p},(J^{t-k}/J^{t-k-1})_{\geq p}).
\end{align*}
%So, we can assume that the filtration of $I^\bullet$ is trivial.  
%We can assume that $I^\bullet_{\geq p} = I^0_{\geq p}$.
\begin{rmk}
The second equal above can be obtained by the following correspondence. 
\[
\xymatrix@R=5pt@C=15pt{ \op{Hom}_{A-\op{gr}, -}(N_{\geq p}, J) \ar@<0.5ex>[r] &  \oplus_{k=0}^{t-1} \op{Hom}_{A-\op{gr}}((N/N^{t-k-1})_{\geq p}, J^{t-k}/J^{t-k-1}) \ar@<0.5ex>[l] \\
\psi  \ar@{}[r]^(.20){}="a"^(.70){}="b" \ar@{|->} "a";"b" &  \oplus_{k=0}^{t-1} (\op{pr}_{t-k} \circ \psi) \\
\oplus_{k=0}^{t-1}(\varphi_{t-k} \circ \pi_{t-k-1})  & \oplus_{k=0}^{t-1} \varphi_{t-k}  \ar@{}[l]^(.20){}="a"^(.70){}="b" \ar@{|->} "a";"b"
}\]
where $\op{pr}_{t-k} : J = \oplus_{k=0}^{t-1} J^{t-k}/J^{t-k-1} \rightarrow J^{t-k}/J^{t-k-1}$ and $\pi_{t-k}: N \rightarrow N/N^{t-k}$ are the natural projections.
\end{rmk}
So, if let $M := (M = M^s \supset M^{s-1} \supset \cdots \supset M^{0} = 0) $ as a filtered module, we have
\begin{small}
\begin{align*}
 F_i C_{\geq p} &= \op{Hom}_{A-\op{gr}, -i}(M_{\geq p}, I^\bullet_{\geq p})
= \oplus_{k=0} \op{Hom}_{A-\op{gr}}(M_{\geq p}/(M^{s-i-k-1})_{\geq p}, \op{gr}_{s-i-k}(I^\bullet_{\geq p})), \\
%because of the assumption about the filtration of $I^0$. 
%And, we have
%\[
 %F_{i}C_{\geq p} = \op{Hom}_{\op{gr}}(M_{\geq p}/(M_{s-(i+1)})_{\geq p}, I^0_{\geq p}).
%\]
   F_i C_{\geq p} / F_{i+1} C_{\geq p} &= %\op{Hom}_{\op{gr}}((M/ M_{s-1})_{\geq p}, (I^\bullet_{s-i}/I^\bullet_{s-i-1})_{\geq p}) \\
 \bigoplus_{k=0} \frac{\op{Hom}_{A-\op{gr}}((M/M^{ s-k-1})_{\geq p}, \op{gr}_{ s-i-k}(I^\bullet_{\geq p}))}{\op{Hom}_{A-\op{gr}}((M/M^{ s-k})_{\geq p}, \op{gr}^{ s-i-k}(I^\bullet_{\geq p}))}.
\end{align*} 
\end{small}
Then, we apply the functor $\op{Hom}_{A-\op{gr}}(-, \op{gr}_{s-i-k}(I^\bullet_{\geq p}))$ to the exact sequence
\[
 0 \rightarrow \op{gr}_{s-k}(M_{\geq p}) \rightarrow (M/M^{s-k-1})_{\geq p} \rightarrow (M/M^{s-k})_{\geq p} \rightarrow 0 .
\]
Because $I^\bullet_{s-i-k}$ is an injective module and $\op{Hom}_{A-\op{gr}}(L_{\geq p}, \op{gr}_{s-i-k}(I^\bullet_{\geq p})) =$ $ \op{Hom}_{A-\op{gr}}(L_{\geq p}, \op{gr}_{s-i-k}(I^\bullet_{\geq p}))$ for a graded $A$-module $L$, we have an exact sequence
\begin{align*}
  0 &\rightarrow \op{Hom}_{A-\op{gr}}((M/M^{s-k})_{\geq p}, \op{gr}_{s-i-k}(I^\bullet_{\geq p})) 
\rightarrow \op{Hom}_{A-\op{gr}}((M/M^{s-k-1})_{\geq p}, \op{gr}_{s-i-k}(I^\bullet_{\geq p}))\\
 &\rightarrow \op{Hom}_{A-\op{gr}}(\op{gr}_{s-k}(M_{\geq p}), \op{gr}_{s-i-k}(I^\bullet_{\geq p})) \rightarrow 0.
\end{align*}
Therefore, we have
\[
  F_i C_{\geq p} / F_{i+1} C_{\geq p} =  \oplus_{k=0} \op{Hom}_{\op{gr}}(\op{gr}_{s-k}(M_{\geq p}), \op{gr}_{s-i-k}(I^\bullet_{\geq p})).
\]
This is the desired equality.

Finally, because $\op{Ext}^i_{A-\op{gr}}(\op{gr}_{s-k}(M_{\geq p}), \op{gr}_{s-i-k}(I^\bullet_{\geq p})) = 0$ (\cite[Lemma 4.3.7]{ciocan2001}), we have $\op{gr}_{s-i-k}(I^\bullet_{\geq p})$ is $\op{Hom}_{A-\op{gr}}((M^{s-k}/M^{s-k-1})_{\geq p}, -) -$acyclic. 
Thus, we have
\[
 H^{i+j}(F_i C_{\geq p} / F_{i+1} C_{\geq p}) = \oplus \op{Ext}^{i+j}_{A-\op{gr}}(\op{gr}_{k}(M_{\geq p}), \op{gr}_{k-i}(N_{\geq p})).
\]
So, in the same way as in the proof of Lemma \ref{165514_26Jun21}, we have the claim.
\end{proof}

Therefore, from Claim \ref{181958_29Jun21} and the above discussion, we have 
\[
 \op{Ext}^i_{-}(\mathscr{F}, \mathscr{G}) = \operatorname*{lim}_{\underset{p}{\rightarrow}} \op{Ext}^i_{A-\op{gr}, -}(\Gamma^{\op{fil}}_{\geq p}(\mathscr{F}), \Gamma^{\op{fil}}_{\geq p}(\mathscr{G})).
\]

(b) Next, it is sufficient to prove that if let $F^\bullet \rightarrow M \rightarrow 0$
 be a filtered graded free resolution of $M$ %(about the exitence of filtered graded free resolution, see \cite[Proposition 5.6.5]{kremer2014}) 
, then for $q \gg 0$
\[
 \op{Ext}^i_{A-\op{gr}, -}(M_{\leq q}, N_{\leq q}) = H^i(\op{Hom}_{A-\op{gr},-}(F^{\bullet}_{\leq q}, N_{\leq q})).
\]
First note that the truncated complex  $F^\bullet_{\leq q} \rightarrow M_{\leq q} \rightarrow 0$ is a respolution of $M_{\leq q}$ (which is not free). 
In the same way as in the proof of \ref{165514_26Jun21}, we  can get a spectral sequence 
\[
 H^{i+j}(\op{Hom}_{A-\op{gr}, -}(P^\bullet_{\leq q}, N_{\leq q})) \Leftarrow E^{ij}_1 := \begin{cases} H^{i+j}(F_i C_{\leq q} / F_{i+1} C_{\leq q}) & i \geq 0 \\
0 & i < 0
\end{cases}
\]
, where $F_i C_{\leq q} := \{ 0  \rightarrow \op{Hom}_{A-\op{gr}, -i}(P^0_{\leq q}, N_{\leq q}) \rightarrow  \op{Hom}_{A-\op{gr}, -i}(P^1_{\leq q}, N_{\leq q}) \rightarrow \cdots \}$.
And, we have 
\[
 H^{i+j}(F_i C_{\leq q} / F_{i+1} C_{\leq q}) = \oplus_k \op{Ext}^{i+j}_{A-\op{gr}}(\op{gr}_{k}(M_{\leq q}), \op{gr}_{k-i}(N_{\leq q})) \quad \text{if } i \geq 0
\]
because the similar equality to \ref{162727_1Jul21} holds and $\op{Ext}^i_{A-\op{gr}}(M_{\leq q}, N_{\leq q})$ are calculated by using the truncated resolution $F^\bullet_{\leq q} \rightarrow M_{\leq q} \rightarrow 0$ (\cite[page 437]{ciocan2001}). 

The latter claim is proved samely by using the fact any coherent sheaf on a projective scheme $Y$ has a resolution by graded $A \otimes \mathscr{O}_Y$-modules $\mathscr{F}^\bullet$ which are of the form $\mathscr{F}^i = A \otimes \mathscr{E}^\bullet$ such that each $\mathscr{E}^i$ is locally free sheaves of finite rank (see also \cite[page 437]{ciocan2001}).
\end{proof}

\begin{cor}
For any $i$, there exists $q\gg p \gg 0$ such that
\[
 \op{Ext}^i_{-}(\mathscr{F}, \mathscr{F}) = \op{Ext}^i_{A-\op{gr}, -}(\Gamma^{\op{fil}}_{[p,q]}(\mathscr{F}), \Gamma^{\op{fil}}_{[p,q]}(\mathscr{F}))
\]
for all $k$-points $\mathscr{F}$ of $\mathcal{FC}\op{oh}^{\text{HN}}_{(\alpha_1, \cdots, \alpha_s)}(X)$.
 \label{160735_17Jan22}
\end{cor}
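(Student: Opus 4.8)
The plan is to deduce the statement by chaining the two parts of Lemma~\ref{173516_29Sep21}. Part~(a) gives, for every $k$-point $\mathscr{F}$, an equality $\op{Ext}^i_{-}(\mathscr{F},\mathscr{F}) = \varinjlim_{p}\op{Ext}^i_{A-\op{gr},-}(\Gamma^{\op{fil}}_{\geq p}(\mathscr{F}),\Gamma^{\op{fil}}_{\geq p}(\mathscr{F}))$, and part~(b), applied with $M=N=\Gamma^{\op{fil}}_{\geq p}(\mathscr{F})$ (which is finitely generated once $\mathscr{F}$ is $p$-regular), lets us replace each truncation $\Gamma^{\op{fil}}_{\geq p}(\mathscr{F})$ by the finitely generated piece $\Gamma^{\op{fil}}_{[p,q]}(\mathscr{F})=(\Gamma^{\op{fil}}_{\geq p}(\mathscr{F}))_{\leq q}$ once $q$ is large compared with $p$. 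For a single sheaf this already yields the asserted identity, so the only real issue is that the integers $p$ and $q$ must be chosen \emph{uniformly} over all $k$-points of $\mathcal{FC}\op{oh}^{\op{HN}}_{(\alpha_1,\cdots,\alpha_s)}(X)$; this is what I would spend the proof establishing.

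For the uniformity I would spread the construction over the parameter scheme used in the proof of Lemma~\ref{224726_27Jun21}: there $\mathcal{FC}\op{oh}^{\op{HN}}_{(\alpha_1,\cdots,\alpha_s)}(X)$ is exhibited as a quotient $[\mathcal{R}/G]$ with $\mathcal{R}$ an open subscheme of a relative flag scheme over a Quot scheme, hence of finite type over $k$ and carrying a universal filtered sheaf $0=\mathscr{U}^0\subset\cdots\subset\mathscr{U}^s$. Boundedness of the sets $S_j=\{\op{gr}_j(\mathscr{F})\}$ gives a $p_0$ such that for $p\geq p_0$ each subquotient $\mathscr{U}^j/\mathscr{U}^{j-1}$, and hence each $\mathscr{U}^j$, is fibrewise $p$-regular; cohomology and base change then make the $\pi_*(\mathscr{U}^j(i))$ locally free on $\mathcal{R}$ and compatible with base change for $i\geq p$, so that $\Gamma^{\op{fil}}_{\geq p}(\mathscr{U})$ and $\Gamma^{\op{fil}}_{[p,q]}(\mathscr{U})$ are filtered graded $A\otimes\mathscr{O}_{\mathcal{R}}$-modules with locally free components and the expected fibres. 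The point is then that every homological input in the proofs of Lemma~\ref{165514_26Jun21} and Lemma~\ref{173516_29Sep21} --- the regularity of $\mathscr{F}$, of filtered injective resolutions of the $\op{gr}_j(\mathscr{F})$, and of the bar-type resolutions truncated in degrees $\leq i$ --- varies in bounded families, so the index at which the colimit in part~(a) becomes constant is bounded uniformly over $\mathcal{R}$, yielding a single $p_1\geq p_0$ with $\op{Ext}^i_{-}(\mathscr{F},\mathscr{F})=\op{Ext}^i_{A-\op{gr},-}(\Gamma^{\op{fil}}_{\geq p_1}(\mathscr{F}),\Gamma^{\op{fil}}_{\geq p_1}(\mathscr{F}))$ for all $\mathscr{F}$. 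Fixing $p=p_1$ and invoking the ``moreover'' clause of part~(b) for the family $\Gamma^{\op{fil}}_{\geq p_1}(\mathscr{U})$ over $\mathcal{R}$ then produces a single $q\gg p_1$ with $\op{Ext}^i_{A-\op{gr},-}(\Gamma^{\op{fil}}_{\geq p_1}(\mathscr{F}),\Gamma^{\op{fil}}_{\geq p_1}(\mathscr{F}))=\op{Ext}^i_{A-\op{gr},-}(\Gamma^{\op{fil}}_{[p_1,q]}(\mathscr{F}),\Gamma^{\op{fil}}_{[p_1,q]}(\mathscr{F}))$ for every $k$-point $\mathscr{F}$, and chaining the two equalities finishes the argument.

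I expect the main obstacle to be precisely this uniformity. Pointwise the corollary is nothing but Lemma~\ref{173516_29Sep21}, so the work is in re-running the spectral-sequence computations of that lemma in families over $\mathcal{R}$ whose graded components are locally free, and in extracting from boundedness a uniform bound on the stage at which the colimit of part~(a) stabilizes; once that is done, and given that $p$-regularity and the base-change statements over $\mathcal{R}$ are already available from the proof of Lemma~\ref{224726_27Jun21}, the remainder is bookkeeping.
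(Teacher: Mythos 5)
Your argument follows the paper's own proof: it chains parts (a) and (b) of Lemma~\ref{173516_29Sep21} and appeals to the finite-type presentation of $\mathcal{FC}\op{oh}^{\op{HN}}_{(\alpha_1,\cdots,\alpha_s)}(X)$ (as a quotient of an open locus in a relative flag scheme, from the proof of Lemma~\ref{224726_27Jun21}) to make $p$ and $q$ uniform. The paper's proof is just a few lines and cites finite type without elaboration, so your detailed filling-in of the uniformity step amplifies what the paper leaves implicit rather than departing from it.
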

\begin{proof}
 From (a) of Lemma we have \ref{173516_29Sep21}, $\op{Ext}^i_{-}(\mathscr{F},\mathscr{F}) = \op{Ext}^i_{A-\op{gr,-}}(\Gamma^{\op{fil}}_{\geq p}(\mathscr{F}),\Gamma^{\op{fil}}_{\geq p}(\mathscr{F}))$ for $0 \ll p$. 
Moreover, we assume that $p,q$ are independent of $\mathscr{F}$ because $\mathcal{FC}\op{oh}^{\text{HN}}_{(\alpha_1, \cdots, \alpha_s)}(X)$ is of finite type.
%Note that the above equality holds when the assumption of (b) of the Lemma 
Then, we apply (b) of the lemma.
\end{proof}

\begin{prop}
 $\Gamma^{\op{fil}}_{[p,q]}$ is an \'etale morphism if $q \gg  p \gg  0$
\label{171246_1Jul21}
\end{prop}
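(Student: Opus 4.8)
The plan is to show that $\Gamma^{\op{fil}}_{[p,q]}$ is formally \'etale and locally of finite presentation, these two properties together being equivalent to \'etaleness for a morphism of algebraic stacks. Local finite presentation causes no trouble: the target $\mathcal{FM}\op{od}^{[p,q]}_{(\alpha_1, \cdots , \alpha_s)}(A)$ is the quotient stack $[\op{MC}(L_{[p,q],-})/P_{[p,q]}]$ of \eqref{163045_6Jul21}, hence of finite type over $k$, and the source is locally of finite type; since \'etaleness is local on the source I may moreover replace the source by a finite-type open substack, for instance $\mathcal{FC}\op{oh}^{\text{HN}}_{(\alpha_1, \cdots, \alpha_s)}(X)$ (which is where $\Gamma^{\op{fil}}_{[p,q]}$ is defined for $p\gg 0$ anyway, by the boundedness argument of Lemma \ref{224726_27Jun21}). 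So the whole content is formal \'etaleness.

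Formal \'etaleness I would verify through deformation theory. Recall the criterion: a morphism of algebraic stacks which are equipped with obstruction theories is formally \'etale once, at every $k$-point $\mathscr{F}$ with image $M:=\Gamma^{\op{fil}}_{[p,q]}(\mathscr{F})$, it induces an isomorphism on infinitesimal automorphisms ($\op{Ext}^0$), an isomorphism on first-order deformations ($\op{Ext}^1$), and an injection on obstruction spaces ($\op{Ext}^2$) compatible with the two obstruction theories (so that the obstruction class to extending a given square-zero deformation on one side maps to the one on the other side). By Propositions \ref{173948_1Jul21} and \ref{173918_1Jul21} the deformation theory of $\mathscr{F}$ is governed by the groups $\op{Ext}^{0,1,2}_{-}(\mathscr{F},\mathscr{F})$ and that of $M$ by $\op{Ext}^{0,1,2}_{A-\op{gr},-}(M,M)$, and Corollary \ref{160735_17Jan22} supplies, for $q\gg p\gg 0$ chosen uniformly over all $k$-points, isomorphisms $\op{Ext}^i_{-}(\mathscr{F},\mathscr{F}) \xrightarrow{\ \sim\ } \op{Ext}^i_{A-\op{gr},-}(M,M)$ for every $i$, in particular for $i=0,1,2$.

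The point that needs real care, and which I expect to be the main obstacle, is \emph{naturality}: one must know that the isomorphisms of Corollary \ref{160735_17Jan22} are precisely the maps induced by the morphism $\Gamma^{\op{fil}}_{[p,q]}$ on automorphism, tangent and obstruction spaces. For the $\op{Hom}$ and tangent parts this is essentially built into the construction, since the comparison isomorphism of Lemma \ref{173516_29Sep21}(a) is realised as the colimit over $p$ of $\Gamma^{\op{fil}}_{\geq p}$ applied to $\op{Ext}$-groups and Lemma \ref{173516_29Sep21}(b) shows this colimit has already stabilised at the chosen $q$; hence the composite is the differential of $\Gamma^{\op{fil}}_{[p,q]}$ at $\mathscr{F}$. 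For the obstruction spaces one must further check that the obstruction class to extending an infinitesimal deformation is carried to the corresponding class on the module side; I would deduce this from functoriality of the homological construction of the obstruction, presenting it in both cases as a Yoneda/bracket operation and, on the module side, identifying it with the dg-Lie obstruction coming from $L_{[p,q],-}$ via Theorem \ref{205012_23Jun21} and Example \ref{205047_23Jun21}, so that the real task is to exhibit an ($L_\infty$-)morphism between the (limits of) dg-Lie algebras controlling the two deformation problems and invoke Corollary \ref{160735_17Jan22} to see it is a quasi-isomorphism in degrees $\leq 2$. Once that compatibility is in place, the criterion of the previous paragraph gives formal \'etaleness at every $k$-point, and combined with local finite presentation we conclude that $\Gamma^{\op{fil}}_{[p,q]}$ is \'etale for $q\gg p\gg 0$.
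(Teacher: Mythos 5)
Your argument follows the same route the paper takes: the paper's own proof is a one-line citation of Propositions \ref{173918_1Jul21} and \ref{173948_1Jul21} together with Corollary \ref{160735_17Jan22} (the reference in the text to ``Corollary \ref{171246_1Jul21}'' is a self-referential typo for \ref{160735_17Jan22}), and you have simply unpacked what that citation amounts to, namely the formally-\'etale-plus-locally-of-finite-presentation reduction and the tangent/obstruction comparison. Your added discussion of the naturality of the comparison isomorphisms is a genuine point that the paper leaves implicit, but it does not change the approach.
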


\begin{proof}
 This is followed from Proposition \ref{173918_1Jul21}, Proposition \ref{173948_1Jul21} and Corollary \ref{171246_1Jul21}.
For the non-filtered case, see \cite[Proposition 3.3]{behrend2014}.
\end{proof}

Finally, we can get the main result in this subsection by combining Lemma \ref{224726_27Jun21} and Proposition \ref{171246_1Jul21}.
\begin{cor}
  $\Gamma^{\op{fil}}_{[p,q]}$ is an open immersion if $q \gg  p \gg  0$
\label{215140_1Jul21}
\end{cor}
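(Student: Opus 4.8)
The plan is to obtain this corollary as the formal combination of the two preceding results: $\Gamma^{\op{fil}}_{[p,q]}$ restricted to $\mathcal{FC}\op{oh}^{\op{HN}}_{(\alpha_1, \cdots, \alpha_s)}(X)$ is a monomorphism for $q \gg p \gg 0$ by Lemma \ref{224726_27Jun21}, and it is \'etale for $q \gg p \gg 0$ by Proposition \ref{171246_1Jul21} (where, as in the proof of that proposition through Corollary \ref{160735_17Jan22}, \'etaleness is read on the finite-type substack of HN filtrations, which is exactly the common domain of the two inputs). One then invokes the standard principle that a morphism of algebraic stacks which is simultaneously a monomorphism and \'etale is an open immersion; note both moduli in sight are genuine algebraic stacks, being quotients of a locally closed subscheme of a relative flag scheme, resp.\ of $\op{MC}(L_{[p,q],-})$, by a linear algebraic group.

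Concretely, I would first fix the quantifiers. Each of Lemma \ref{224726_27Jun21} and Proposition \ref{171246_1Jul21} produces a threshold of the shape $\exists p_0\ \forall p \ge p_0\ \exists q_0\ \forall q \ge q_0$; taking, for every admissible $p$, the maximum of the two $q_0$'s (and the maximum of the two $p_0$'s) yields a single range of pairs $(p,q)$ for which both conclusions hold, since a finite conjunction of conditions of this form is again of this form. For such $(p,q)$ the morphism
\[
\Gamma^{\op{fil}}_{[p,q]} : \mathcal{FC}\op{oh}^{\op{HN}}_{(\alpha_1, \cdots, \alpha_s)}(X) \longrightarrow \mathcal{FM}\op{od}^{[p,q]}_{(\alpha_1, \cdots , \alpha_s)}(A)
\]
is a monomorphism and \'etale, hence in particular flat and locally of finite presentation, with a monomorphic (thus, by \'etaleness, trivial) diagonal.

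Finally, I would conclude by the following fact: a monomorphism of algebraic stacks which is flat and locally of finite presentation is an open immersion (the scheme case is classical, and the statement descends to algebraic stacks since being an open immersion, flatness, local finite presentation, and being a monomorphism are all smooth-local on source and target; see \cite{stacks-project}). Since an \'etale morphism is flat and locally of finite presentation, $\Gamma^{\op{fil}}_{[p,q]}$ is an open immersion, as claimed. There is no genuinely hard step here: the entire substance lives in Lemma \ref{224726_27Jun21} and Proposition \ref{171246_1Jul21}, and the only mild point requiring care is the bookkeeping of the two $q \gg p \gg 0$ ranges together with the verification that both inputs are applied on the same (HN, finite-type) substack, which is precisely how Corollary \ref{160735_17Jan22} was arranged.
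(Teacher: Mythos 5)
Your proof is correct and takes essentially the same route as the paper, which simply combines Lemma \ref{224726_27Jun21} (monomorphism) with Proposition \ref{171246_1Jul21} (\'etaleness) and applies the standard fact that an \'etale monomorphism of algebraic stacks is an open immersion. Your additional remarks on reconciling the two $q \gg p \gg 0$ thresholds are correct but routine and left implicit in the paper.
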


\subsection{Explicit descriptions}
In this subsection, we describe the image of $\Gamma^{\op{fil}}_{[p,q]}|_{ \mathcal{FC}\op{oh}^{\op{HN}}_{(\alpha_1, \cdots, \alpha_s)}(X)}$.
\begin{thm}
 For $q \gg p' \gg p \gg 0$, the functor $\Gamma^{\op{fil}}_{[p,q]}$ induces an open immersion
\[
 \Gamma^{\op{fil}}_{[p,q]}|_{\mathcal{FC}\op{oh}^{\op{HN}}_{(\alpha_1, \cdots, \alpha_s)}(X)} : \mathcal{FC}\op{oh}^{\op{HN}}_{(\alpha_1, \cdots, \alpha_s)}(X) \rightarrow \mathcal{FM}\op{od}^{[p,q], \op{sfg}}_{(\alpha_1, \cdots , \alpha_s)}(A)
\]
whose image is equal to the locus of whose truncations into the intervals $[p,q]$ and $[p',q]$ are HN-filtrations respectively.
We denote this open substack of $\mathcal{FM}\op{od}^{[p,q], \op{sfg}}_{(\alpha_1, \cdots , \alpha_s)}(A)$ by $\mathcal{FM}\op{od}^{[p,q]_{\op{HN}}, \op{sfg}, [p',q]_{\op{HN}}}_{(\alpha_1, \cdots , \alpha_s)}(A)$.
\label{150320_6Jul21}
\end{thm}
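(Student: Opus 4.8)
The plan is to combine the three facts already established in the paper---that $\Gamma^{\op{fil}}_{[p,q]}$ is an open immersion for $q \gg p \gg 0$ (Corollary \ref{215140_1Jul21}), that its restriction to $\mathcal{FC}\op{oh}^{\op{HN}}_{(\alpha_1,\cdots,\alpha_s)}(X)$ is a monomorphism (Lemma \ref{224726_27Jun21}), and the relationship between HN-filtrations of sheaves and of graded $A$-modules coming from Hoskins' work---and to turn the set-theoretic identification of the image into an open substack description. The overall strategy: first show the image lands in $\mathcal{FM}\op{od}^{[p,q],\op{sfg}}_{(\alpha_1,\cdots,\alpha_s)}(A)$; second, identify the $k$-points of the image with the stated locus; third, argue that this locus is open and that the bijection on points together with étaleness upgrades to an isomorphism onto the open substack.

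First I would verify the ``sfg'' claim: for a sheaf $\mathscr{F}$ in $\mathcal{FC}\op{oh}^{\op{HN}}_{(\alpha_1,\cdots,\alpha_s)}(X)$, each step $\mathscr{F}^i$ of the HN-filtration has bounded HN-type, hence (as in the proof of Lemma \ref{224726_27Jun21}) the sheaves $\op{gr}_i(\mathscr{F})$, and therefore all the $\mathscr{F}^i$, are $p$-regular for $p \gg 0$; Castelnuovo--Mumford regularity then forces each $\Gamma_{[p,q]}(\mathscr{F}^i)$ to be generated in degree $p$ as a graded $A$-module, which is precisely strong finite generation. Next I would analyze the image on points. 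Given a strongly finitely generated filtered $[p,q]$-graded $A$-module $M$ in the image, applying the exact equivalence $\mathscr{S}$ (the sheafification, left adjoint to $\Gamma_{[p,q]}$, which satisfies $\mathscr{S}\circ\Gamma_{[p,q]}\simeq\op{id}$ on the relevant locus) recovers a filtered sheaf; the point is that $M$ lies in the image exactly when this recovered sheaf is an HN-filtration, and by Hoskins' comparison of HN-stratifications, semistability of the sheaf graded pieces $\op{gr}_i(\mathscr{F})$ is detected by $(\alpha(q),-\alpha(p))$-stability of the truncated modules $\op{gr}_i(M)|_{[p,q]}$ and $\op{gr}_i(M)|_{[p',q]}$ for suitable auxiliary $p'$ with $q\gg p'\gg p\gg 0$---the two truncations being needed because (as the remark after Definition \ref{def:module-hn} notes) stability of $\lambda$-modules in a single window is slightly too weak to control the sheaf, and one extra window rigidifies it. This is where the inequality $q\gg p'\gg p\gg 0$ and the boundedness of $\mathcal{FC}\op{oh}^{\op{HN}}_{(\alpha_1,\cdots,\alpha_s)}(X)$ (so that $p,p',q$ can be chosen uniformly) enter.

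The hardest step will be the ``only if'' direction of this point-wise characterization: showing that if the $[p,q]$- and $[p',q]$-truncations of $M$ are both HN-filtrations for the module stability, then the associated sheaf filtration is genuinely the HN-filtration of the associated sheaf---i.e., that no instability of the sheaf can hide in the ``invisible'' degrees below $p$. Here I would combine $p$-regularity bounds with Hoskins' dictionary: a destabilizing subsheaf would, after applying $\Gamma_{[p,q]}$ (and using regularity to ensure no cohomology is lost in the window), produce a destabilizing $\lambda$-submodule in one of the two windows, contradicting the hypothesis; the two windows together pin down both the slopes and the ordering $P(\mathscr{F}^1/\mathscr{F}^0,t)\succ\cdots\succ P(\mathscr{F}^s/\mathscr{F}^{s-1},t)$. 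Finally, to pass from a bijection on $k$-points to an isomorphism of stacks: since $\Gamma^{\op{fil}}_{[p,q]}$ is already an open immersion onto its image in $\mathcal{FM}\op{od}^{[p,q]}_{(\alpha_1,\cdots,\alpha_s)}(A)$, it suffices to check the image is an open substack of $\mathcal{FM}\op{od}^{[p,q],\op{sfg}}_{(\alpha_1,\cdots,\alpha_s)}(A)$, which follows because the HN condition in each window is an open condition on families (the HN-type is upper semicontinuous / constancy of HN-type is open), and because a monomorphism that is also étale (Proposition \ref{171246_1Jul21}) and bijective on points onto an open locus is an isomorphism onto that locus. I would then name this locus $\mathcal{FM}\op{od}^{[p,q]_{\op{HN}},\op{sfg},[p',q]_{\op{HN}}}_{(\alpha_1,\cdots,\alpha_s)}(A)$ as in the statement and conclude.
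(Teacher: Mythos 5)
Your outline follows the paper's overall skeleton (verify \textbf{sfg} via $p$-regularity; identify the $k$-points of the image; use the already-proved open immersion from Corollary~\ref{215140_1Jul21}), but there is a genuine gap in the backward direction of the image characterization, and your explanation of why the second window $[p',q]$ is needed misidentifies the mechanism.

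The gap: you appeal to $\mathscr{S}\circ\Gamma_{[p,q]}\simeq\op{id}$ to ``recover'' the sheaf from $M$, and then ask only whether the recovered sheaf is an HN-filtration. But $\mathscr{S}\circ\Gamma_{[p,q]}\simeq\op{id}$ is an identity on sheaves, and is useless for showing that an arbitrary strongly finitely generated $M$ with the two HN conditions actually lies in the image. What one must prove is the other composition, namely that $\Gamma_{[p,q]}(\mathscr{S}(M))\simeq M$, and this is not automatic: as the paper notes, $\mathscr{S}(M)$ is only guaranteed to be $p'$-regular, so the a priori comparison is only $\Gamma_{[p,q]}(\mathscr{S}(M))_{\geq p'}\simeq M_{\geq p'}$, and the low degrees $[p,p'-1]$ could disagree. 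The paper devotes all of Step~5 to closing this: it identifies the degree-$p$ and degree-$q$ pieces of each $\mathscr{S}(M^{i+1}/M^i)$ using semistability of the associated Kronecker module together with $p$-regularity and dimension counting, then runs an induction with the five lemma up the filtration. Your proposal has no analogue of this step, so even granted that $\mathscr{S}(M)$ is the correct HN-filtration of sheaves, you have not shown that $\Gamma_{[p,q]}^{\op{fil}}$ actually sends it back to $M$.

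The secondary issue: you attribute the need for both windows $[p,q]$ and $[p',q]$ to the remark after Definition~\ref{def:module-hn} about $\succ$ failing the seesaw property. That remark concerns non-uniqueness of module HN-filtrations and the paper explicitly says it causes no trouble; it is not why $[p',q]$ appears. The actual reason is the regularity gap above: $\mathscr{S}(M)$ is $p'$-regular but not necessarily $p$-regular, so the semistability comparison (\cite[Theorem~3.7]{behrend2014}) between the sheaf $\mathscr{S}(M^{i+1}/M^i)$ and the module $\Gamma_{[\,\cdot\,,q]}(\mathscr{S}(M^{i+1}/M^i))$ can only be invoked in the shifted window $[p',q]$, where the truncation of $M$ and the truncation of $\Gamma\mathscr{S}(M)$ are known to agree. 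Relatedly, the key sheaf-vs-module semistability equivalence you need is from Behrend--Ciocan-Fontanine--Hwang--Rose, not from Hoskins; Hoskins is only the guide for the slope inequality computation in Step~2 of the paper.
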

\begin{proof}
\item
\subsubsection*{Step 1}
For $p \gg 0$ and any sheaf $\mathscr{F}$ with HN-filtration $0 =\mathscr{F}^0 \subset \mathscr{F}^1 \subset \cdots \subset \mathscr{F}^s = \mathscr{F} $, all $\Gamma^{\op{fil}}_{[p,q]}(\mathscr{F}^{i+1}/\mathscr{F}^i)$ are generated in degree $p$ because $\mathscr{F}^{i+1}/\mathscr{F}^{i}$ is assumed to be $p$-regular.
It follows that $\Gamma^{\op{fil}}_{[p,q]}|_{ \mathcal{FC}\op{oh}^{\op{HN}}_{(\alpha_1, \cdots, \alpha_s)}(X)}$ is an open immersion from Corollary \ref{215140_1Jul21} for $q \gg p \gg 0$.

 \subsubsection*{Step 2}

Next we show any HN-filtration in $\mathcal{FC}\op{oh}_{(\alpha_1, \cdots, \alpha_s)}(X)$ is sent to a HN-filtration in $\mathcal{FM}\op{od}^{[p,q], \op{sfg}}_{(\alpha_1, \cdots , \alpha_s)}(A)$ for $q \gg p \gg 0$.
Note that for $q \gg p\gg 0$, any coherent sheaf $\mathscr{G}$ on $X$ whose Hilbert polynomial is one of $\{\alpha_1,\cdots, \alpha_s\}$ is semistable if and only if $\Gamma_{[p,q]}(\mathscr{G})$ is semistable with respect to $(\alpha(q), -\alpha(p))$-stability by \cite[Theorem 3.7]{behrend2014}.
%Actually, by \cite[Theorem 3.7]{behrend2014}, $\mathscr{F}_{i+1}/\mathscr{F}_i$ is semistable if and only if $\Gamma_{[p,q]}(\mathscr{F}_{i+1}/\mathscr{F}_i)$ is semistable with respect to $(h^0((\mathscr{F}_{i+1}/\mathscr{F}_{i})(q)),-(h^0((\mathscr{F}_{i+1}/\mathscr{F}_{i})(p)))$ for any HN-filtration $0= \mathscr{F}_0 \subset \mathscr{F}_1 \subset \cdots \subset \mathscr{F}_s$ in $\mathcal{FC}\op{oh}_{(\alpha_1, \cdots, \alpha_s)}(X)$.
For two sheaves $\mathscr{G}_1,\mathscr{G}_2$, $\mu_{(\alpha(q), -\alpha(p))}(\Gamma_{[p,q]}\mathscr{G}_1) > \mu_{(\alpha(q), -\alpha(p))}(\Gamma_{[p,q]}\mathscr{G}_2)$ if and only if $h^0(\mathscr{G}_1(p))/h^0(\mathscr{G}_1(q)) > h^0(\mathscr{G}_2(p))/h^0(\mathscr{G}_2(q))$ from the following calculations (a similar statement is mentioned in \cite[Lemma 5.2]{hoskins2018stratifications}, where representations of quivers are treated and the slope treated there is slightly different from ours):
{\small
\begin{align*}
 \mu_{(\alpha(q), -\alpha(p))}(\Gamma_{[p,q]}(\mathscr{G}_i)) &= \frac{\alpha(q)h^0(\mathscr{G}_i(p))-\alpha(p)h^0(\mathscr{G}_i(q))}{h^0(\mathscr{G}_i(p))+h^0(\mathscr{G}_i(q))} 
= -\alpha(p)+\frac{(\alpha(p)+\alpha(q))h^0(\mathscr{G}_i(p)}{h^0(\mathscr{G}_i(p))+h^0(\mathscr{G}_i(q))} \quad (i= 1,2),
\end{align*}
\begin{align*}
&\mu_{(\alpha(q), -\alpha(p))}(\Gamma_{[p,q]}(\mathscr{G}_1))- \mu_{(\alpha(q), -\alpha(p))}(\Gamma_{[p,q]}(\mathscr{G}_2)) \\%&= \frac{(\alpha(p)+\alpha(q))h^0(\mathscr{G}_1(p)}{h^0(\mathscr{G}_1(p))+h^0(\mathscr{G}_1(q))} - \frac{(\alpha(p)+\alpha(q))h^0(\mathscr{G}_1(p)}{h^0(\mathscr{G}_1(p))+h^0(\mathscr{G}_1(q))} \\
&= (\alpha(p)+\alpha(q))\left(\frac{h^0(\mathscr{G}_1(p))}{h^0(\mathscr{G}_1(p))+h^0(\mathscr{G}_1(q))} - \frac{h^0(\mathscr{G}_2(p))}{h^0(\mathscr{G}_2(p))+h^0(\mathscr{G}_2(q))} \right) \\
&= \frac{(\alpha(p)+\alpha(q))h^0(\mathscr{G}_1(q))h^0(\mathscr{G}_1(q))}{(h^0(\mathscr{G}_1(p))+h^0(\mathscr{G}_1(q)))(h^0(\mathscr{G}_2(p))+h^0(\mathscr{G}_2(q)))} \left(\frac{h^0(\mathscr{G}_1(p))}{h^0(\mathscr{G}_1)(q)}-\frac{h^0(\mathscr{G}_2(p))}{h^0(\mathscr{G}_2)(q)} \right).
\end{align*}
}
If let $\alpha(t) := \sum^s_{i=1} \alpha_i(t)$, this proves that $\Gamma_{[p,q]}(\mathscr{F}^{i+1}/\mathscr{F}^{i})$ is semistable with respect to $(\alpha(q), -\alpha(p) )$ and $\mu_{(\alpha(q), -\alpha(p) )}(\Gamma_{[p,q]}(\mathscr{F}^{i+1}/\mathscr{F}^{i})) > \mu_{(\alpha(q), -\alpha(p) )}(\Gamma_{[p,q]}(\mathscr{F}^{i+2}/\mathscr{F}^{i+1}))$ (cf.\cite[Theorem 5.7]{hoskins2018stratifications}).
%So, we have $\mu_{(\theta_p, \theta_q)}(\Gamma_{[p,q]}(\mathscr{F}_{i+1}/\mathscr{F}_i)) > \mu_{(\theta_p, \theta_q)}(\Gamma_{[p,q]}(\mathscr{F}_{i+2}/\mathscr{F}_{i+1}))$ where $\theta_p = \alpha(q), \theta_q = -\alpha(p)$.
In the same way, we can show that for $q \gg p' \gg p \gg 0$ and any HN-filtration $\mathscr{F} \in \mathcal{FC}\op{oh}_{(\alpha_1,\cdots, \alpha_s)}(X)$, $\Gamma_{[p,q]}(\mathscr{F})|_{\geq p'}$ is also a HN-filtration with respect to $(\alpha(q), -\alpha(p'))$-stability.

\vskip.5\baselineskip

In the rest of the proof, we show the following :
 For $q \gg p' \gg p \gg 0$, any $M \in \mathcal{FM}\op{od}^{[p,q], \op{sfg}}_{(\alpha_1, \cdots , \alpha_s)}(A)$ such that $M_{\geq p'}$ is a HN-filtration is sent to a object of $\mathcal{FC}\op{oh}^{\op{HN}}_{(\alpha_1, \cdots, \alpha_s)}(X)$.

\subsubsection*{Step 3}
%This step is mainly based on \cite[Theorem 3.10]{behrend2014}.

Let $M= M^s \supset \cdots \supset M^1 \supset M^0 =0$ be a filtered graded $A$-module in $[p,q]$ of dimension $\alpha|_{[p,q]}$ and generated in degree $p$. Then, we have exact sequences
\[
  0 \rightarrow K^i \rightarrow A_{[0, q-p]} \otimes_{k} M^i_{p} \rightarrow M^{i} \rightarrow 0
\]
,where $K^i := \op{Ker}(A_{[0, q-p]} \otimes_{k} M^{i}_p \rightarrow M^{i})$.
They are compatible with the filtrations of $M^\bullet, A_{[0, q-p]} \otimes_{k} M^{\bullet }_p $ and $K^{\bullet}$ when we think of them as filtered modules.
Then, if let $K'^i \subset A\otimes M^{i}_p$ be a submodule generated by $K_i$ in $ A\otimes M^{i}_p$, then $\mathscr{S}(M^i) \simeq \widetilde{A \otimes_k M^{i}_p/ {K'}^{i}}$ where $\widetilde{A \otimes_k M^{i}_p/ {K'}^{i}}$ is the associated sheaf of $A \otimes_k M^{i}_p/ K'^{i}$ (see \cite{serre1955faisceaux}, \cite[Theorem 3.10]{behrend2014}).
Note that $K'^{i}$ is generated in degree $p+1$ if $p \gg 0$.
The functor $\; \widetilde{} \; $ is exact.
 So, any filtered graded $A$-module in $ \mathcal{FM}\op{od}^{[p,q], \op{sfg}}_{(\alpha_1, \cdots , \alpha_s)}(A)$ is sent a filtered coherent sheaf on $X$ by $\mathscr{S}$.
 
 %\subsubsection*{Step 4}

In the proof of \cite[Theorem 3.10]{behrend2014}, any finitely $[p,q]$-graded $A$-module $M$ in degree $p$ with dimension vector $\alpha|_{[p,q]}$ is sent to a $p'$-regular coherent sheaf with Hilbert polynomial $\alpha$ and $\Gamma_{[p,q]}(\mathscr{S}(M))_{\geq p'} \simeq M_{\geq p'}$ for $q > p' \gg p \gg 0$.
The choice of $p'$ is dependent only on $p$.
So, a similar claim holds :
for any $(0 = M^0 \subset M^1 \subset \cdots \subset M^s =M) \in \mathcal{FM}\op{od}^{[p,q], \op{sfg}}_{(\alpha_1, \cdots , \alpha_s)}(A)$, it holds that each $\mathscr{S}(M_{i+1})/\mathscr{S}(M_i) = \mathscr{S}(M^{i+1}/M^i)$ is $p'$-regular with Hilbert polynomial $\alpha_{i+1}$ and $\Gamma_{[p,q]}(\mathscr{S}(M^i))_{\geq p'} \simeq (M^i)|_{\geq p'}$ for $q \gg p' \gg p \gg 0$.
%Note that although we need $q \gg p$, we do not deed $q \gg p'$ but $q > p'$ about the discussion so far.

 \subsubsection*{Step 4}

%Conversely, If let $M \in \mathcal{FM}\op{od}^{[p,q], \op{sfg}}_{(\alpha_1, \cdots , \alpha_s)}(A)$ such that $M|_{\geq p'}$ is HN-filtration with respect to $(\alpha(q), -\alpha(p'))$, for $q > p' \gg p \gg 0$, $\Gamma_{[p,q]}(\mathscr{S}(M))|_{\geq p'} = M_{\geq p'}$ as filtered graded $A$-modules (and the $\mathscr{S}(M_{i+1}/M_i)$ are $p'$-regular) from the above discussion. 

We take $q \gg p' \gg p \gg 0$ so that we have
\begin{inparaenum}
 \item : the last sentence of Step $4$ holds and
\item : any coherent sheaf $\mathscr{G}$ on $X$ with Hilbert Polynomial $\alpha_i$ is semistable if and only if $\Gamma_{[p,q]}(\mathscr{G})$ (resp. $\Gamma_{[p',q]}(\mathscr{G})$) is semistable with respect to $(\alpha_i(q), \alpha_i(p))$-stability (resp. $(\alpha_i(q), \alpha_i(p'))$-stability) for all $i$ (cf. Step 2).
\end{inparaenum}
Then, any $M \in \mathcal{FM}\op{od}^{[p,q], \op{sfg}}_{(\alpha_1, \cdots , \alpha_s)}(A)$ is sent to a filtered sheaf in $\mathcal{FC}\op{oh}_{(\alpha_1,\cdots, \alpha_s)}(X)$ such that the $\mathscr{S}(M^{i+1}/M^i)$ are semistable because $(\alpha(q), -\alpha(p') )$-stability is equivalent to $(\alpha_{i+1}(q), -\alpha_{i+1}(p'))$-stability.
So, $\mathscr{S}(M)$ is a HN-filtration and $p$-regular.
%Moreover, we can assume the $\mathscr{S}(M_{i+1}/M_i)$ are $p$-regular because the set of semistable sheaves with Hilbert polynomial $\alpha_{i+1}$ is bounded on $X$ for any $i$.
%Moreover, we have $\Gamma_{[p,q]}(\mathscr{S}(M)) \simeq M$ as filtered graded $A$-modules.
%because we have the following commutative diagram (cf. \cite[Proposition 1.8.8]{lazarsfeld2017positivity})
%\[\def\objectstyle{\scriptstyle}
%   \def\labelstyle{\scriptscriptstyle}
% \xymatrix{ A_{[0,q-p]} \otimes_k K_{p+1} \ar[r] \ar[d]^{\simeq} & A_{[0,q-p]} \otimes_k M_p \ar[r] \ar[d]^{\simeq} & M \ar[r] \ar[d] & 0  & \text{(exact)}\\
%\Gamma_{[p,q]}( K_{p+1} \otimes \mathscr{O}_X(-p-1) )  \ar[r] & \Gamma_{[p,q]}( M_p \otimes \mathscr{O}_X(-p)) \ar[r] \ar@{}[lu]|{\circlearrowright} & \Gamma_{[p,q]}(\mathscr{S}(M)) \ar[r] \ar@{}[lu]|{\circlearrowright} & 0 &\text{(exact)}}\]
%, where $K$ is a submodule of $A_{[0,q-p]} \otimes_k M_p$ defined above.

\subsubsection*{Step 5}

 We show $\Gamma_{[p,q]}(\mathscr{S}(M)) \simeq M$.
Note that each $(\mathscr{S}(M^{i+1}/M^i)_p, \mathscr{S}(M^{i+1}/M^i)_q)$ is a semistable Kronecker module.
Then, it follows that $\Gamma(\mathscr{S}(M^{i+1}/M^i)(q)) \simeq (M^{i+1}/M^i)_q$ from the choice of $q$.
We also have the natural morphism $\Gamma(\mathscr{S}(M^{i+1}/M^i)(p)) \rightarrow (M^{i+1}/M^i)_p$ is injective by the above note (see also \cite[Prop 5.13]{alvarez2007functorial}).
Moreover this is an isomorphism because of the dimensions of $\Gamma(\mathscr{S}(M^{i+1}/M^i)(p)), (M^{i+1}/M^i)_p$ (we use the $p$-regurality of $\mathscr{S}(M^{i+1}/M^i)$ ).
So, we have a commutative diagram
{\small
\[\xymatrix{
A_{[0, q-p]} \otimes_k (M^{i+1}/M^i)_p \ar[r] \ar[d]^{\simeq} & M^{i+1}/M^i \ar[d]\\
A_{[0, q-p]} \otimes_k \Gamma(\mathscr{S}(M^{i+1}/M^i)(p)) \ar@{->>}[r] & \Gamma_{[p,q]}\mathscr{S}(M^{i+1}/M^i) \ar@{}[lu]|{\circlearrowright}. 
}\]}
It follows that the right vertical arrow is isomorphism from the equality of the dimension vectors of $M^{i+1}/M^i$ and $\Gamma_{[p,q]}\mathscr{S}(M^{i+1}/M^i)$.
Here, we have 
{\small
\[\xymatrix{
0 \ar[r] & M^{i} \ar[r] \ar[d]& M^{i+1} \ar[d] \ar[r] & M^{i+1}/M^i \ar[r] \ar[d] & 0 \quad \text{(exact)}\\
0 \ar[r] & \Gamma_{[p,q]}\mathscr{S}(M^i) \ar[r] & \Gamma_{[p,q]}\mathscr{S}(M^{i+1}) \ar[r] \ar@{}[lu]|{\circlearrowright} & \Gamma_{[p,q]}\mathscr{S}(M^{i+1}/M^i) \ar[r] \ar@{}[lu]|{\circlearrowright} & 0 \quad \text{(exact)}
}\]}
, where the surjectivity of the arrow $\Gamma_{[p,q]}\mathscr{S}(M^{i+1}) \rightarrow \Gamma_{[p,q]}\mathscr{S}(M^{i+1}/M^i)$ comes from the $p$-regularity of $\mathscr{S}(M^i)$.
If the left and the right vertical arrows are isomorphism, then so is the middle one from the five lemma.
Therefore, inductively we can show that $M^i \simeq \Gamma_{[p,q]}\mathscr{S}(M^{i+1}/M^i)$ and this completes the proof.

\end{proof}

%\begin{rmk}
%(Later, we must delete this remark !!!)
% We may have to add more explannation for the last paragraph ( especially, the last sentence ) of this proof.
%Or can we choose another filtered sheaf $\mathscr{F}'$ such that $\Gamma_{[p,q]}(\mathscr{F}') \simeq M$ ???
%\end{rmk}

\subsection{Derived enhancement}
From now, we assume $X$ is smooth.	
Finally, we can define derived moduli stacks of Harder-Narasimhan filtrations.
This is our aim in this paper.

 Let $\alpha_1(t), \alpha_2(t), \cdots, \alpha_s(t) \in \mathbb{Q}[t]$ such that $\alpha_1(t) \succ \alpha_2(t) \succ \cdots \succ \alpha_s(t)$.
 We take integers $p, p', q$ so that Theorem \ref{150320_6Jul21} holds for $ \Gamma^{\op{fil}}_{[p,q]}|_{\mathcal{FC}\op{oh}^{\op{HN}}_{(\alpha_1, \cdots, \alpha_s)}(X)} : \mathcal{FC}\op{oh}^{\op{HN}}_{(\alpha_1, \cdots, \alpha_s)}(X) \rightarrow \mathcal{FM}\op{od}^{[p,q], \op{sfg}}_{(\alpha_1, \cdots , \alpha_s)}(A)$.
%Let $V$ be an open subscheme of $\op{MC}(L_{-})$ obtained by the pullback of $\mathcal{FM}\op{od}^{[p,q], \op{sfg}, [p',q]\op{-HN}}_{(\alpha_1, \cdots , \alpha_s)}(A)$ from the following diagram:
Then, we have the following diagram:
{\small
\begin{equation*}
\xymatrix@C=12pt{
%L^1_{-} \ar[r] & [L^1_{-}/P] & &\\
%\op{MC}(L_{[p,q],-}) \ar[r] % \ar@{^{(}->}[u] 
& [\op{MC}(L_{[p,q],-})/P_{[p,q]}] \ar[d]^{\simeq} & \op{MC}(L_{[p,q],-}) \ar[l] \\ % \ar@{^{(}->}[u] \\ %\ar@{^{(}->}[u] \\
& \mathcal{FC}\op{oh}^{[p,q]}_{(\alpha_1, \cdots \alpha_s)}(A) 
& \mathcal{FC}\op{oh}^{[p,q] \op{sfg}}_{(\alpha_1, \cdots \alpha_s)}(A) \ar@{_{(}->}[l] 
& \mathcal{FM}\op{od}^{[p,q]_{\op{HN}}, \op{sfg}, [p',q]_\op{HN}}_{(\alpha_1, \cdots , \alpha_s)}(A) \simeq \mathcal{FC}\op{oh}^{\op{HN}}_{(\alpha_1, \cdots, \alpha_s)}(X). \ar@{_{(}->}[l]
}\label{225123_28Jan22}
\end{equation*}}
Here, we define a $P_{[p,q]}$-equivariant open subscheme $U_{[p,q]}$ of $L^1_{[p,q],-}$ to be 
{\small
\[
 U_{[p,q]} : = \left\{ {\begin{gathered} \lambda: A \otimes_k V_{[p,q]} \rightarrow V_{[p,q]} \in L^1_{[p,q],-} \text{ such that the truncations of } \lambda \text{ to } [p,q] \text{ and } [p',q] \text{ is } \\ \text{HN-filtrations respectively and } V_{[p,q]} \text{ is strongly finitely generated} \end{gathered}} \right\}.
\]}
Then, we have an isomorphism $[(U_{[p,q]} \cap \op{MC}(L_{[p,q],-}))/P_{[p,q]}] \simeq \mathcal{FM}\op{od}^{[p,q]_{\op{HN}}, \op{sfg}, [p',q]_\op{HN}}_{(\alpha_1, \cdots , \alpha_s)}(A)$.

On the other hand, We define a dg Lie algebra $ L_{\geq p,-}, L_-$ as follows:
\[
 L_{\geq p,-}:= \lim_{\underset{q}{\longleftarrow}} L_{[p,q],-} ,\quad  L_- := \lim_{\underset{p}{\longrightarrow}} L_{\geq p,-}.
\]
We also define $P_{\geq p},P$ to be 
\[
 P_{\geq p} = \op{Gr}_{\op{gr}}(V_{\geq p}) , \quad P = \lim_{\underset{p}{\longrightarrow}} P_{\geq p} .
\]

Let $\op{pr}_p^q: L_{\geq p,-} \rightarrow L_{[p,q],-}$ be the natural projections $(p,q \in \mathbb{Z})$ and let $\iota_p: L_{\geq p} \rightarrow L_{-} (p \in \mathbb{Z})$ be also the natural maps.
Then, we define $U_{\geq p}:= \bigcup_{q \gg p} (\op{pr}_p^q)^{-1}(U_{[p,q]})$ and $U := \bigcup_{p \gg 0} \iota_p(U_{\geq p}) $.
Note that $U_{\geq p} \subset L_{\geq p,-}$ and $U \subset L_{-}$ are open immersions.
In addition, $U$ is $P$-equivariant.

\begin{dfn}
\label{204924_6Jan22}
 We define the derived moduli stack $\mathcal{RFC}\op{oh}^{\op{HN}}_{(\alpha_1, \cdots, \alpha_s)}(X)$ of Harder-Narasimhan filtrations of type $(\alpha_1, \cdots, \alpha_s)$ on $X$ is the restriction of the derived moduli stacks $[L_{-}/P]$ to $[U/P]$.
\end{dfn}

The derived moduli stacks $\mathcal{RFC}\op{oh}^{\op{HN}}_{(\alpha_1, \cdots, \alpha_s)}(X)$ is suiteble for the definition of derived moduli stacks of HN-filtrations.
They have the following property from the discussion from the discussion so far.

\begin{thm}
\label{160827_17Jan22}
We have the following.
\begin{itemize}
 \item[(a)] Let $\pi_0(\mathcal{RFC}\op{oh}^{\op{HN}}_{(\alpha_1, \cdots, \alpha_s)}(X)):= \op{Spec} (H^0(\mathscr{O}_{\mathcal{RFC}\op{oh}^{\op{HN}}_{(\alpha_1, \cdots, \alpha_s)}(X)} ))$. Then, 
\begin{equation}
 \pi_0\mathcal{RFC}\op{oh}^{\op{HN}}_{(\alpha_1, \cdots, \alpha_s)}(X) \simeq \mathcal{FC}\op{oh}^{\op{HN}}_{(\alpha_1, \cdots, \alpha_s)}(X).
\end{equation}
\item[(b)] If $0 = \mathscr{F}^0 \subset \mathscr{F}^1 \subset \cdots \subset \mathscr{F}^s = \mathscr{F}$ is a HN-filtration of type $(\alpha_1, \cdots, \alpha_s)$ on $X$, then 
\begin{equation}
  H^iT^{\bullet}_{[\mathscr{F}]}\mathcal{RFC}\op{oh}^{\op{HN}}_{(\alpha_1, \cdots, \alpha_s)}(X) \simeq \op{Ext}^i_{-}(\mathscr{F}, \mathscr{F}) \quad i \geq 0
\end{equation}
, where  $T^{\bullet}_{[\mathscr{F}]}\mathcal{RFC}\op{oh}^{\op{HN}}_{(\alpha_1, \cdots, \alpha_s)}(X)$ is the tangent dg-space of $\mathcal{RFC}\op{oh}^{\op{HN}}_{(\alpha_1, \cdots, \alpha_s)}(X)$ at $\mathscr{F}$.
\end{itemize}
\end{thm}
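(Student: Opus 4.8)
The plan is to deduce the theorem from the explicit presentation $\mathcal{RFC}\op{oh}^{\op{HN}}_{(\alpha_1, \cdots, \alpha_s)}(X) = [U/P]$ built from the dgla $L_-$, together with the homological comparison results already established. For part (a), I would first compute $H^0$ of the structure sheaf of the derived quotient $[L_-/P]$ restricted to $[U/P]$. By Theorem \ref{205012_23Jun21} and Example \ref{205047_23Jun21}, applied at each finite stage $L_{[p,q],-}$, one has $\op{Spec}(H^0(\mathscr{R})) \simeq \op{MC}(L_{[p,q],-})$, which is the zero locus of the curving; passing to the limit over $q$ and then the colimit over $p$, the underlying classical locus of $[L_-/P]$ is $[\op{MC}(L_-)/P]$. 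Intersecting with the open $[U/P]$ and invoking the isomorphism $[(U_{[p,q]} \cap \op{MC}(L_{[p,q],-}))/P_{[p,q]}] \simeq \mathcal{FM}\op{od}^{[p,q]_{\op{HN}}, \op{sfg}, [p',q]_{\op{HN}}}_{(\alpha_1, \cdots , \alpha_s)}(A)$ from the previous subsection, this classical locus is exactly that explicit open substack. Finally, Theorem \ref{150320_6Jul21} identifies $\mathcal{FM}\op{od}^{[p,q]_{\op{HN}}, \op{sfg}, [p',q]_{\op{HN}}}_{(\alpha_1, \cdots , \alpha_s)}(A) \simeq \mathcal{FC}\op{oh}^{\op{HN}}_{(\alpha_1, \cdots, \alpha_s)}(X)$, which gives the claimed isomorphism $\pi_0 \mathcal{RFC}\op{oh}^{\op{HN}}_{(\alpha_1, \cdots, \alpha_s)}(X) \simeq \mathcal{FC}\op{oh}^{\op{HN}}_{(\alpha_1, \cdots, \alpha_s)}(X)$. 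One subtle point to check is that $H^0$ commutes with the inductive and projective limits used to form $L_-$ and $U$; since the transition maps are (truncations of) surjections and the opens are unions along open immersions, this should be routine but deserves a sentence.

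For part (b), the tangent complex of the derived quotient $[L_-/P]$ at a Maurer–Cartan element $\mu$ corresponding to $\mathscr{F}$ is, by the general theory recalled after Example \ref{205047_23Jun21}, the dgla $(L_{-}, d^\mu, [\cdot,\cdot])$ shifted so that $H^1$ classifies deformations and $H^2$ holds obstructions — more precisely the deformation complex of the quotient problem, where the Lie algebra of $P$ contributes $\op{Hom}_{k\op{-gr},-}(V,V)$ in degree $0$. Thus $H^i T^\bullet_{[\mathscr{F}]} \simeq H^i(L_{\geq p,-}^\mu)$ for $p \gg 0$, which at each finite truncation is the Hochschild-type cohomology computing $\op{Ext}^i_{A-\op{gr},-}(M_{[p,q]}, M_{[p,q]})$ by Proposition \ref{173918_1Jul21} and Lemma \ref{165514_26Jun21} (the identification of Hochschild cohomology with filtered graded Ext via the Bar resolution). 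Taking the limits, $H^i(L_{\geq p,-}^\mu) = \op{Ext}^i_{A-\op{gr},-}(\Gamma^{\op{fil}}_{\geq p}(\mathscr{F}), \Gamma^{\op{fil}}_{\geq p}(\mathscr{F}))$, and then Lemma \ref{173516_29Sep21}(a) (or Corollary \ref{160735_17Jan22}) gives $\operatorname*{lim}_{\underset{p}{\rightarrow}} \op{Ext}^i_{A-\op{gr},-}(\Gamma^{\op{fil}}_{\geq p}(\mathscr{F}), \Gamma^{\op{fil}}_{\geq p}(\mathscr{F})) = \op{Ext}^i_{-}(\mathscr{F}, \mathscr{F})$, which is the desired formula.

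The main obstacle I anticipate is \emph{commuting cohomology with the limits}. The dgla $L_-$ is a filtered colimit of projective limits of finite-dimensional dglas, so one must argue that $H^i(L_{\geq p,-}^\mu) = \lim_{\overset{\longleftarrow}{q}} H^i(L_{[p,q],-}^\mu)$ (no $\lim^1$ obstruction, using that the system is eventually an isomorphism in each fixed cohomological degree by Lemma \ref{173516_29Sep21}(b), since the truncation $[p,q] \to [p,q']$ induces an iso on Ext for $q,q'\gg 0$), and that $H^i(L_-^\mu) = \lim_{\overset{\longrightarrow}{p}} H^i(L_{\geq p,-}^\mu)$ (filtered colimits are exact). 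The other delicate point is making precise that the tangent dg-space of the \emph{restriction} $[U/P]$ agrees with that of $[L_-/P]$ at points of $U$ — this is immediate because $U \hookrightarrow L_-$ is an open immersion and the $P$-action restricts, so tangent complexes are unchanged. Once these limit interchanges are justified, both (a) and (b) follow by assembling the cited results.
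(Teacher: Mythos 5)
Your proposal is correct and follows essentially the same approach the paper intends; the paper in fact gives no explicit proof of Theorem \ref{160827_17Jan22} (stating only that it follows ``from the discussion so far''), and your reconstruction --- assembling Theorem \ref{205012_23Jun21}, Example \ref{205047_23Jun21}, the identification $[\op{MC}(L_{[p,q],-})/P_{[p,q]}]\simeq\mathcal{FM}\op{od}^{[p,q]}_{(\alpha_1,\cdots,\alpha_s)}(A)$, Theorem \ref{150320_6Jul21}, Proposition \ref{173918_1Jul21}, Lemma \ref{165514_26Jun21} and Lemma \ref{173516_29Sep21}/Corollary \ref{160735_17Jan22} --- is precisely what that earlier discussion amounts to. You also correctly flag and handle the limit-interchange subtlety (Mittag--Leffler vanishing of $\lim^1$ over $q$, exactness of the filtered colimit over $p$) which the paper leaves implicit.
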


\begin{rmk}

\begin{itemize}
\item For the definition of tangent dg-spaces, see \cite[Section 1D]{behrend2014}, \cite[Def 5.1.5]{ciocan2002derived}.

\item Note that the derived stack $\mathcal{RFC}\op{oh}^{\op{HN}}_{(\alpha_1, \cdots, \alpha_s)}(X)$ is of infinite type. 
Actually, $U$ is not finite dimensional in general. 
But, $[(U \cap \op{MC}(L_-))/P]$ is finite dimensional because of the choice of $p,q$ and Theorem \ref{150320_6Jul21}, which is isomorphic to $\mathcal{FC}\op{oh}^{\op{HN}}_{(\alpha_1, \cdots, \alpha_s)}(X)$ in (a).
\item The constructions of derived moduli spaces in \cite{ciocan2001}, \cite{behrend2014} do not give the collect moduli spaces because higher cohomology of tangent complexes at some points in those spaces have discrepancies.
Our construction fixes this problem.

\end{itemize}
\end{rmk}

\section{Comparing with another construction of derived moduli stacks of filtered sheaves}

\begin{nota}
 \begin{itemize}
  \item $\op{SSet}$: the category of simplicial sets
\item $\op{Cat}$: the category of categories
\item $\op{Grpd}$: the category of groupoids
\item $\op{sCat}$ : the category of simplicial categories
\item $ \op{dg}_{\leq 0}\op{Alg}_k$ : the category of non-positive graded dg $k$-algabras
\item $\op{dg}_{\leq 0}\op{Nil}_{k}^{\flat}$: the full subcategory of $ \op{dg}_{\leq 0}\op{Alg}_k$ consisting of $C$ whose natural augmented map $C \rightarrow H^0(C)$ has a nilpotent kernel and $C_i = 0$ for all $i \ll 0$.
\item for $C \in \op{dg}_{\leq 0}\op{Alg}_k$, $\op{Spec}(C)$ is the associated affine dg-scheme (cf. \cite[Section 2.2]{ciocan2001}) 
 \end{itemize}
\end{nota}

In this section, we compare our construction of derived stacks of HN-filtrations with that of derived moduli stacks of filtered sheaves by Di Natale (\cite{di2017derived}).

\subsection{Translation of our construction}
\label{164506_16Jan22}

Since we construct derived stacks of HN-filtrations as dg-stacks, we translate them into derived stacks in the sense of To\"{e}n (\cite{toen2014derived}, \cite{toen2008homotopical}).
In this section, `` derived stacks and schemes'' means
derived stacks and schemes in the sense of To\"{e}n respectively. 
So, derived stacks and schemes are distinguished from dg-stacks and schemes respectively.

Let $L_-, P$ be as in Definition \ref{204924_6Jan22}.
Let $X$ be $\op{Spec}(L_-^1)$ and $\mathscr{R}_-$ be the sheaf of dg-algebras on $X$ which is associated to $L_-$.
In addition, $B_- := \Gamma(X, \mathscr{R}_-)$.
In the context of \cite[Section 3.3]{toen2004hag}, our construction of dg stacks of filtered modules is translated into the language of derived stack.
\begin{dfn}
 Let $\mathbb{R}\op{Spec}(B_-)$ be the derived scheme associated to $B_-$.
Then, from the action of $P$ on $B_-$, we define a simplicial derived affine scheme 
\[
\mathbb{R}\op{Spec}(B_-) \times_k^h P^\bullet : [n] \mapsto \mathbb{R}\op{Spec}(B_-) \times_k^h \overbrace{P \times_k^h \cdots \times_k^h P}^{(n-1) \text{-times}}.
\]
And, we define
\[
 [\mathbb{R}\op{Spec}(B_-) / P] := \op{Hocolim}_{n \in \Delta^{\text{op}}} ( \mathbb{R}\op{Spec}(B_-) \times_k^h P^n).
\]
\end{dfn}

\begin{dfn}
We set $\Omega_n$ to be the dg algebra 
\[
 k[t_0, t_1, \cdots, t_n, dt_0, dt_1, \cdots, dt_n]/(\sum t_i-1, \sum dt_i)
\]
, where $t_i$ are degree $0$ and $dt_i$ are degree $1$.
 
\end{dfn}

\vskip 1 \baselineskip

By using techniques in \cite{pridham2013constructing}, we consider a groupoid-valued functor.
 We define the groupoid-valued functor $\mathcal{F}_- : \op{dg}_{\leq 0}\op{Nil}_{k}^{\flat} \rightarrow \op{Grpd}$ as the stackification of the groupoid presheaf
\[
 C \mapsto [\op{MC}(L_- \otimes_k C)/ P \otimes_k C^0]_{\op{act}}
\]
in the strict \'etale topology (\cite[Def 2.17]{pridham2013constructing}), where $[\op{MC}(L_- \otimes_k C)/ P \otimes_k C^0]_{\op{act}}$ means the Deligne groupoid obtained from the action $P \otimes_k C_0 \curvearrowright \op{MC}(L_- \otimes_k C)$.

We define a simplicial enrichment $\underline{\mathcal{F}_-} : \op{dg}_{\leq 0}\op{Nil}_{k}^{\flat} \rightarrow [\Delta^{\op{op}}, \op{Cat}]$ as 
\[
 \underline{\mathcal{F}_-}(C) : [n] \mapsto \mathcal{F}_-(\tau_{\leq 0} ( C \otimes_k \Omega_n)) 
\]
, where $\tau$ means the canonical truncation functor of cochain complexes.
In addition, we applying the nerve functor $N$ and the right adjoint $\bar{W}$ to Illusie's total decalage functor (\cite[Def 1.25]{pridham2013constructing}) to $\underline{\mathcal{F}_-}$ and we get 
\[
 (\bar{W} \circ N) (\underline{\mathcal{F}_-}) :  \op{dg}_{\leq 0}\op{Nil}_{k}^{\flat} \rightarrow \op{SSet}.
\]

The following proposition is obtained as in the same way of \cite[Prop 3.16, Rmk 3.17]{pridham2013constructing}
\begin{prop}
 $(\bar{W} \circ N) (\underline{\mathcal{F}_-})$ is an almost finitely presented derived geometric $1$-stack.
Moreover the stacks $(\bar{W} \circ N) (\underline{\mathcal{F}_-}) $ and $ [\mathbb{R}\op{Spec}(B_-) / P]$ are weakly equivalent. 
\label{163437_16Jan22}
\end{prop}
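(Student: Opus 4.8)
The plan is to follow the construction of \cite[Prop 3.16, Rmk 3.17]{pridham2013constructing} essentially verbatim, adapting it to the pro-ind dg-Lie algebra $L_-$ and the gauge group $P$ of Definition \ref{204924_6Jan22}. First I would verify that $(\bar W \circ N)(\underline{\mathcal{F}_-})$ is a derived stack, i.e.\ that it satisfies \'etale hyperdescent. This is essentially built into the definition: the Deligne-groupoid presheaf $C \mapsto [\op{MC}(L_- \otimes_k C)/P \otimes_k C^0]_{\op{act}}$ is stackified in the strict \'etale topology, the enrichment $[n] \mapsto \mathcal{F}_-(\tau_{\leq 0}(C \otimes_k \Omega_n))$ --- with $\Omega_\bullet$ the polynomial de Rham forms on the simplices --- supplies the homotopy-coherent structure, and $\bar W \circ N$ turns the resulting simplicial category into a simplicial-set-valued functor carrying the requisite descent. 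These are exactly the steps of \cite{pridham2013constructing}; nothing new is needed here beyond observing that $L_-$, being the filtered colimit $\varinjlim_p L_{\geq p,-}$ of the dg-Lie algebras of Definition \ref{204924_6Jan22}, meets the (mild) hypotheses there.

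Next I would establish $1$-geometricity simultaneously with the weak equivalence to $[\mathbb{R}\op{Spec}(B_-)/P]$. The crucial point is that the derived Maurer--Cartan functor $C \mapsto \op{MC}(L_- \otimes_k C)$ is corepresented by $B_-$: by Theorem \ref{205012_23Jun21} and Example \ref{205047_23Jun21} applied over $\op{Spec}(k)$ (with $L = L_-$), the dg-scheme $(\mathscr{R}_-, q)$ has underlying algebra $\op{Sym}(L_-[1]^\vee)$ with the Behrend--Getzler differential and satisfies $\op{Spec}(H^0(\mathscr{R}_-)) \simeq \op{MC}(L_-)$, so $B_- = \Gamma(X, \mathscr{R}_-)$ corepresents $\op{MC}(L_- \otimes_k -)$ on $\op{dg}_{\leq 0}\op{Nil}_k^{\flat}$ and $\mathbb{R}\op{Spec}(B_-)$ is the associated derived affine scheme. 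The simplicial derived affine scheme $\mathbb{R}\op{Spec}(B_-) \times_k^h P^\bullet$ is then a groupoid presentation of the stackified Deligne groupoid, and identifying the latter with $[\mathbb{R}\op{Spec}(B_-)/P] = \op{Hocolim}_{\Delta^{\op{op}}}(\mathbb{R}\op{Spec}(B_-) \times_k^h P^\bullet)$ is precisely \cite[Prop 3.16]{pridham2013constructing}. Since $\mathbb{R}\op{Spec}(B_-)$ is derived affine and $P$ is built from the algebraic groups $P_{[p,q]}$ by (co)limits, this homotopy colimit is a derived geometric $1$-stack.

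Finally, for ``almost finitely presented'' I would use that $L_- = \varinjlim_p \varprojlim_q L_{[p,q],-}$ with each $L_{[p,q],-}$ finite-dimensional and with surjective projections $\op{pr}_p^q$ and injective maps $\iota_p$; hence $B_-$ is assembled by the same limits from the finitely presented dg-algebras $\op{Sym}(L_{[p,q],-}[1]^\vee)$, and $P$ from the (parabolic) algebraic groups $P_{[p,q]}$, so the stack is a controlled limit of finitely presented geometric stacks. I expect the bookkeeping here to be the main obstacle: one must verify that geometricity and the almost-finite-presentation property survive the pro-ind limit, i.e.\ that $\op{MC}$, $\mathbb{R}\op{Spec}$ and stackification commute suitably with $\varprojlim_q$ and $\varinjlim_p$ --- this is the content of \cite[Rmk 3.17]{pridham2013constructing} --- while everything else is formal once that compatibility is established.
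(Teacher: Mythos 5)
Your proposal follows the same route as the paper: the paper's entire proof consists of citing \cite[Prop 3.16, Rmk 3.17]{pridham2013constructing} and observing that the argument carries over to $L_-$ and $P$, which is precisely what you have spelled out (corepresentability of $\op{MC}(L_-\otimes_k -)$ by $B_-$, identification of the Deligne groupoid with the simplicial groupoid presentation, and stability of geometricity and almost-finite-presentation under the pro-ind limit via Remark 3.17). Your elaboration is a faithful expansion of what the paper leaves implicit in the citation.
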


\begin{dfn}
Let $\alpha_1, \alpha_2,\cdots, \alpha_s \in \mathbb{Q}[t]$ such that $\alpha_1 \succ \alpha_2 \succ \cdots \succ \alpha_s$. 
 Let $\tilde{U} \subset \mathbb{R}\op{Spec}(B_-)$ be the open derived subscheme of $\mathbb{R}\op{Spec}(B_-)$ corresponding to $U$ in \ref{204924_6Jan22}.
Note that $\tilde{U}$ is $P$-invariant. 
So, from the weak equivalence of \ref{163437_16Jan22}, we define the corresponding subfuntor of $\mathcal{F}_-$. 
 We denote it by $\mathcal{F}_{(\alpha_1, \cdots, \alpha_s)}^\op{HN}$.
\label{165325_16Jan22}

\end{dfn}

\begin{dfn}[{\cite[Definition 1.8, 1.13 and Lemma 1.12]{pridham_2012representability}}]
 Let $\mathcal{G}: \op{dg}_{\leq 0}\op{Alg}_k \rightarrow \op{SSet}$ be a homotopy-preserving and homotopy-homogeneous functor (e.g. if $\mathcal{G}$ is a derived geometric $n$-stack).
For any $C \in \op{dg}_{\leq 0}\op{Alg}_k$, let $x \in \mathcal{G}(C)$ be a point.
We define the tangent functor 
\[\xymatrix@=5pt{
T_x(\mathcal{G}/k) & : & \op{dg}_{\leq 0}\op{Mod}_{C} \ar[r] & \op{SSet} \\
&& M \ar@{|->}[r] & T_x(\mathcal{G}/k)(M)& \hspace{-9pt} :=  \mathcal{G}(C \oplus M) \times_{\mathcal{G}(C)}^h \{x\}.
}\]
We also define 
\[
 \op{D}_x^{n-i}(\mathcal{G},M) := \pi_i(T_x(\mathcal{G}/k)(M[-n])).
\]
Note that $\op{D}^{n-i}_x$ is not dependent on $n,i$ in the above defnition.
\end{dfn}

\begin{rmk}
\label{212436_23Jan22}
We have the equvalence of (non-derived) stacks
\begin{equation}
  \mathcal{FC}\op{oh}^{\op{HN}}_{(\alpha_1, \cdots, \alpha_s)}(X) \simeq \pi_0[\tilde{U}/ P]
\end{equation}
by Definition \ref{165325_16Jan22} and (1) of Theorem \ref{160827_17Jan22}, where $\pi_0[\tilde{U}/ P]$ is the non-derived stack associated to $[\tilde{U}/ P]$.

 From Corollary \ref{160735_17Jan22}, (2) of Theorem \ref{160827_17Jan22} and Proposition \ref{163437_16Jan22}, We have 
\begin{equation}
  H^iT^{\bullet}_{[\mathscr{F}]}\mathcal{RFC}\op{oh}^{\op{HN}}_{(\alpha_1, \cdots, \alpha_s)}(X) \simeq \op{D}^i_{\Gamma^{\text{fil}}_{[p,q]}(\mathscr{F})}([\tilde{U}/ P], k).
\end{equation}
So, our translation of $\mathcal{RFC}\op{oh}^{\op{HN}}_{(\alpha_1, \cdots, \alpha_s)}(X)$ into $[\tilde{U}/ P]$ (and $(\bar{W} \circ N ) (\underline{\mathcal{F}_{(\alpha_1, \cdots, \alpha_s)}^\op{HN})})$ is suitable.
\end{rmk}

\subsection{Another construction of deived moduli of HN-filtrations by methods by Di Natale and Pridham}
\label{164538_16Jan22}

Here, we construct derived moduli stacks of HN-filtrations by using the method in \cite{di2017derived}.
From now, $X$ means a smooth projective variety over $k$.

\begin{dfn}
Let $C$ be a $k$-algebra.
 We define $\op{QCoh}(X \times_k \op{Spec}(C))_{\text{flat}}$ (resp. $\op{FQCoh}(X \times_k \op{Spec}(C))_{\text{flat}}$) to be tha category of (resp. filtered) quasi-coherent sheaves on $X \times_k \op{Spec}(C)$ which are (resp. whose associated graded sheaves) are flat over $\op{Spec}(C)$.
%We also define the functor $\mathcal{FFQ}\op{Coh}_X :\op{Alg}_k \rightarrow \op{Grpd}$ to be the associated groupoid-valued functor.
\end{dfn}

\begin{dfn}
 Let $O^{\bullet}$ be a cosimplicial $k$-algebra. 
We define $\op{Mod}(O^\bullet)$ (resp. $\op{FMod}(O^\bullet)$) to be the category of cosimplicial $O^{\bullet}$-modules (resp. filtered cosimplicial $O^\bullet$-modules).
We also define $\op{Mod}_{\text{cart}}(O^\bullet)$ (resp. $\op{FMod}_{\text{cart}}(O^\bullet)$ ) to be the category of cosimplicial cartesian $O^\bullet$-modules (resp. filtered cosimplicial cartesian $O^\bullet$-modules).

We define $\op{dgMod}(O^\bullet), \op{dgFMod}(O^\bullet),\op{dgMod}_{\text{cart}}(O^\bullet), \op{dgFMod}_{\text{cart}}(O^\bullet)$ in the same way.
\end{dfn}

\begin{egg}
\label{173031_12Jan22}
Let $ \{U_i\}_{i \in I}$ be afinite open affine covering of $X$ and let $\mathcal{U} := \coprod_i U_i$.
We define a simplicial scheme $\mathcal{U}^\bullet$ and a cosimplicial $k$-algebra $\mathcal{O}(X \times_k \op{Spec}(C))^\bullet$ as follows; 
\begin{align*}
  \mathcal{U}_n := \overbrace{\mathcal{U} \times_X \mathcal{U} \times_X \cdots \times_X \mathcal{U}}^{(n+1)-\text{times}} &= \coprod_{i_0, \cdots, i_n}U_{i_0 \cdots i_n} = \coprod_{i_0, \cdots, i_n} U_{i_0} 
%\times_X U_{i_1} 
\times_X \cdots \times_X U_{i_n}, \\
O(X \times_k \op{Spec}(C))^n &:= \Gamma \left(\mathcal{U}_n \times_k \op{Spec}(C), \mathscr{O}_{\mathcal{U}_n \times_k \op{Spec}(C)} \right)
\end{align*}
, where $C$ is a $k$-algebra.
Note that $\op{dgMod}(O(X \times_k \op{Spec}(C))^\bullet)$ and $\op{dgFMod}(O(X \times_k \op{Spec}(C))^\bullet)$ have model structures (\cite[page 835, 844]{di2017derived}).
Because $X \times_k \op{Spec}(C)$ is quasi-compact and semi-separated, we have \[
	\op{Ho}(\op{dgMod_{\op{cart}}}(O(X \times_k \op{Spec}(C))^\bullet)) \simeq D(\op{QCoh}(X \times_k \op{Spec}(C))) \quad (\text{\cite[Thm 5.5.1]{huttemann2010derived}}).
\]			
In particular, we have a equivalence
\begin{equation}
  \op{Mod}_{\op{cart}}(O(X \times_k \op{Spec}(C))^\bullet) \simeq \op{Qcoh}(X \times_k \op{Spec}(C)) 
\end{equation}
, where the correspondence from the left-hand (resp.right-hand) side to the right-hand (resp.left-hand) side is obtained by the sheafification functor (resp.the global section functor).
Since the sheafification functor and the global section functor preserve inclusions, we also have a equivalence
\begin{equation}
 \op{FMod}_{\op{cart}}(O(X \times_k \op{Spec}(C))^\bullet) \simeq \op{FQCoh}(X \times_k \op{Spec}(C)) .\label{164301_15Jan22}
\end{equation}
\end{egg}

%\vskip 1 \baselineskip
%\vskip\baselineskip

Using approaches by Di Natale and Pridham and Example \ref{173031_12Jan22}, we obtain another construction of derived moduli stacks of HN-filtrations.
We first consider the functor 
\[\xymatrix@R=5pt@C=5pt{
 \mathcal{FM}{od}_{\text{cart}}(O(X \times_k -))_{\text{flat}}& :& \op{Alg}_k \ar[r]  & \op{sCat} \\ 
C  \ar@{}[r]^(.20){}="a"^(.90){}="b" \ar@{|->} "a";"b"&&& \hspace{-1.5cm}\op{FMod}_{\text{cart}}(O(X \times_k \op{Spec}(C))^\bullet)_{\text{flat}}.
}
\]
We also define $\mathcal{F}dg\mathcal{M}{od}_{\text{cart}}(O(X \times_k -))$ in the same way .
By the flatness of the objects in $\op{FMod}_{\text{cart}}(O(X \times_k \op{Spec}(C))^\bullet)_{\text{flat}}$, the functor  $\mathcal{FM}\op{od}_{\text{cart}}(O(X) \times_k -)_{\text{flat}}$ is embedded in the functor $\mathcal{F}dg\mathcal{M}{od}_{\text{cart}}(O(X \times_k -))$ and this is an open embedding.

When we denote the subfunctor of $\mathcal{FM}{od}_{\text{cart}}(O(X \times_k -))_{\text{flat}}$ which parametrizes coherent sheaves by $\mathcal{FM}{od}_{\text{coh}}(O(X \times_k -))_{\text{flat}}$, we get a open embedding $\mathcal{FM}{od}_{\text{coh}}(O(X \times_k -))_{\text{flat}} \hookrightarrow \mathcal{FM}{od}_{\text{cart}}(O(X \times_k -))_{\text{flat}}$.

Let $\alpha_1, \alpha_2,\cdots, \alpha_s \in \mathbb{Q}[t]$ such that $\alpha_1 \succ \alpha_2 \succ \cdots \succ \alpha_s$. 
We define the functor $\mathcal{FM}od_{\text{coh}, (\alpha_1, \cdots, \alpha_s)}^{\text{HN}}$ $(O(X \times_k -))_{\text{flat}}$ by using the equivalence \ref{164301_15Jan22} as follows: 
\[
 \xymatrix@R=5pt@C=0pt{ \mathcal{FM}od_{\text{coh}, (\alpha_1, \cdots, \alpha_s)}^{\text{HN}}(O(X \times_k -))_{\text{flat}}& :& \op{Alg}_k \ar[r]  &  
\op{sCat}  \\ %\ar@{}[d]|{\rotatebox{90}{$\in$}}
\hspace{-1.5cm} C \ar@{}[r]^(-.15){}="a"^(.55){}="b" \ar@{|->} "a";"b" &&&
  \hspace{-2.5cm} \left\{\small {\begin{gathered} \mathcal{M} \in \op{FMod}_{\text{cart}}(O(X \times_k \op{Spec}(C))^\bullet)_{\text{flat}} \text{ such that } \\
	\text{the corresponding object in } 
\op{FQcoh}(X \times_k \op{Spec}(C)) \\ 
\text {is an object in } \mathcal{FC}\op{oh}^{\text{HN}}_{(\alpha_1, \cdots, \alpha_s)}(X)
\end{gathered}} \right\}.
}\]
We have an embedding $\mathcal{FM}od_{\text{coh}, (\alpha_1, \cdots, \alpha_s)}^{\text{HN}}(O(X \times_k -))_{\text{flat}} \hookrightarrow \mathcal{FM}{od}_{\text{coh}}(O(X \times_k -))_{\text{flat}}$ because we have an open immersion of moduli stacks $\mathcal{FC}\op{oh}^{\text{HN}}_{(\alpha_1, \cdots, \alpha_s)}(X) \hookrightarrow \mathcal{FC}\op{oh}(X)$.
In addition, when let $\mathbf{M}^0_{\text{filt}}$ be the functor defined in \cite[page 849]{di2017derived}, we have an open embedding $\mathcal{FM}od_{\text{coh}, (\alpha_1, \cdots, \alpha_s)}^{\text{HN}}(O(X \times_k -))_{\text{flat}} \hookrightarrow \mathbf{M}^0_\text{filt}$ and $\mathbf{M}^0_\text{filt}$ satisfies the conditions of \cite[Cor 3.32]{di2017derived}.
Moreover, the corresponding groupoid-valued functor $\pi_0\mathcal{FM}od_{\text{coh}, (\alpha_1, \cdots, \alpha_s)}^{\text{HN}}(O(X \times_k -))_{\text{flat}}$ is a stack because $\mathcal{FC}\op{oh}^{\text{HN}}_{(\alpha_1, \cdots, \alpha_s)}(X)$ is a stack.
Thus, we obtain the following proposition.

\begin{prop}
 The functor $\mathcal{FM}od_{\text{coh}, (\alpha_1, \cdots, \alpha_s)}^{\text{HN}}(O(X \times_k -))_{\text{flat}}$ satisfies the condtions of \cite[Cor 3.32]{di2017derived}.
To be more precise, we consider the functor 
\[\xymatrix@R=5pt@C=0pt{
 \widebreve{\mathcal{FM}od}_{\text{coh}, (\alpha_1, \cdots, \alpha_s)}^{\text{HN}}(O(X \times_k -))_{\text{flat}}&  : & \op{dg}_{\leq 0}\op{Nil}_{k}^{\flat} \ar[r] &\op{sCat} \\
 \hspace{-3cm} C \ar@{}[r]^(-.45){}="a"^(.10){}="b" \ar@{|->} "a";"b" &&&
 \hspace{-4.5cm}\left\{ \small {\begin{gathered}
			      \mathcal{M} \in \op{FdgMod}_{\text{cart}}(O(X \times_k \op{Spec}(C))^\bullet) \text{ such that}\\ 
\mathcal{M} \text{ is cofibrant and } 
\mathcal{M} \otimes_C H^0(C) 
\text{ is weakly equivalent to } \\
\text{an object in } 
\mathcal{FM}od_{\text{coh}, (\alpha_1, \cdots, \alpha_s)}^{\text{HN}}(O(X \times_k \op{Spec}(H^0(C)))^\bullet)_{\text{flat}}.
     \end{gathered}} \right\}.
}\]
Then, the simplicial set-valued functor $(\bar{W} \circ N)\widebreve{\mathcal{FM}od}_{\text{coh}, (\alpha_1, \cdots, \alpha_s)}^{\text{HN}}(O(X \times_k -))_{\text{flat}}$ is a derived geometric 1-stack.
\label{165404_16Jan22}
\end{prop}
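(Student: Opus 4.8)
The plan is to realize $\mathcal{FM}od_{\text{coh}, (\alpha_1, \cdots, \alpha_s)}^{\text{HN}}(O(X \times_k -))_{\text{flat}}$ as an \emph{open} subfunctor of Di Natale's functor $\mathbf{M}^0_{\text{filt}}$ and then invoke \cite[Cor 3.32]{di2017derived} essentially verbatim. First I would assemble the chain of embeddings already indicated in the discussion preceding the statement. The equivalence $\op{FMod}_{\op{cart}}(O(X \times_k \op{Spec}(C))^\bullet) \simeq \op{FQCoh}(X \times_k \op{Spec}(C))$ of Example \ref{173031_12Jan22} (given by sheafification and global sections) identifies $\mathcal{FM}od_{\text{coh}}(O(X \times_k -))_{\text{flat}}$ with the functor of flat families of filtered coherent sheaves on $X$, and under this identification $\mathcal{FM}od_{\text{coh}, (\alpha_1, \cdots, \alpha_s)}^{\text{HN}}(O(X \times_k -))_{\text{flat}}$ corresponds to families whose fibres lie in $\mathcal{FC}\op{oh}^{\op{HN}}_{(\alpha_1, \cdots, \alpha_s)}(X)$. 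Composing the open immersion $\mathcal{FC}\op{oh}^{\op{HN}}_{(\alpha_1, \cdots, \alpha_s)}(X) \hookrightarrow \mathcal{FC}\op{oh}(X)$ with the open embedding $\mathcal{FM}od_{\text{coh}}(O(X \times_k -))_{\text{flat}} \hookrightarrow \mathcal{FM}od_{\text{cart}}(O(X \times_k -))_{\text{flat}}$ and the latter's open embedding into $\mathbf{M}^0_{\text{filt}}$ exhibits our functor as an open subfunctor of $\mathbf{M}^0_{\text{filt}}$, which by hypothesis satisfies the conditions of \cite[Cor 3.32]{di2017derived}.

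Next I would check that these conditions are inherited by open subfunctors, so that they hold for $\mathcal{FM}od_{\text{coh}, (\alpha_1, \cdots, \alpha_s)}^{\text{HN}}(O(X \times_k -))_{\text{flat}}$ and hence the cofibrant-replacement functor $\widebreve{\mathcal{FM}od}_{\text{coh}, (\alpha_1, \cdots, \alpha_s)}^{\text{HN}}(O(X \times_k -))_{\text{flat}}$ is the derived extension to which the corollary applies. Homotopy-preservation and homotopy-homogeneity are conditions that are local on the base and are tested on square-zero extensions, hence descend to any subfunctor cut out by an open condition on $\pi_0$; the cotangent complex of $\widebreve{\mathcal{FM}od}_{\text{coh}, (\alpha_1, \cdots, \alpha_s)}^{\text{HN}}$ at a point agrees with that of Di Natale's derived extension of $\mathbf{M}^0_{\text{filt}}$ at the corresponding point, so its boundedness and the required cohomological amplitude carry over unchanged; and $\pi_0$ of our functor is exactly the stack $\mathcal{FC}\op{oh}^{\op{HN}}_{(\alpha_1, \cdots, \alpha_s)}(X)$, which is algebraic of finite type, being a quotient of an open subscheme of a relative flag scheme by an algebraic group (as recalled in the proof of Lemma \ref{224726_27Jun21}, via \cite[Lemma 2.5]{yoshioka2009fourier}). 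With all hypotheses verified, \cite[Cor 3.32]{di2017derived} then yields that $(\bar{W} \circ N)\widebreve{\mathcal{FM}od}_{\text{coh}, (\alpha_1, \cdots, \alpha_s)}^{\text{HN}}(O(X \times_k -))_{\text{flat}}$ is a derived geometric $1$-stack.

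The hard part will be confirming that the map $\mathcal{FC}\op{oh}^{\op{HN}}_{(\alpha_1, \cdots, \alpha_s)}(X) \hookrightarrow \mathcal{FC}\op{oh}(X)$ really is an open immersion, i.e. that ``being the HN-filtration of a fixed type'' is an open condition on the moduli of filtered coherent sheaves. I would reduce this to three inputs: the stack $\mathcal{FC}\op{oh}_{(\alpha_1, \cdots, \alpha_s)}(X)$ of filtered sheaves whose graded pieces have the prescribed Hilbert polynomials is open and closed in $\mathcal{FC}\op{oh}(X)$ (the graded Hilbert polynomials being a discrete invariant); the set of possible graded pieces $\op{gr}_i(\mathscr{F})$ is bounded (shown in the proof of Lemma \ref{224726_27Jun21}); and within $\mathcal{FC}\op{oh}_{(\alpha_1, \cdots, \alpha_s)}(X)$ the condition that each $\op{gr}_i$ be semistable is open, while the slope inequalities $\alpha_1 \succ \cdots \succ \alpha_s$ are automatic since the graded Hilbert polynomials are fixed, so by uniqueness of HN-filtrations (Theorem \ref{thm:HN-filtration}) such a filtration is necessarily the HN-filtration of its top term. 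Once openness is established, everything else is a formal consequence of Di Natale's machinery, and I would additionally record the compatibility with Remark \ref{212436_23Jan22}: this construction and the quotient-stack construction $[\tilde U / P]$ of Section \ref{164506_16Jan22} have equivalent truncations $\pi_0$ and pointwise-equivalent tangent complexes, so the two derived enhancements of $\mathcal{FC}\op{oh}^{\op{HN}}_{(\alpha_1, \cdots, \alpha_s)}(X)$ agree.
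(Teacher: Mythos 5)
Your proposal follows essentially the same route the paper takes: establish the chain of open embeddings $\mathcal{FM}od_{\text{coh}, (\alpha_1, \cdots, \alpha_s)}^{\text{HN}}(O(X \times_k -))_{\text{flat}} \hookrightarrow \mathcal{FM}od_{\text{coh}}(O(X \times_k -))_{\text{flat}} \hookrightarrow \mathbf{M}^0_{\text{filt}}$ (the last of which Di Natale has already shown satisfies the hypotheses of his Cor 3.32), observe that the hypotheses pass to open subfunctors, and check that $\pi_0$ is a stack via $\mathcal{FC}\op{oh}^{\op{HN}}_{(\alpha_1, \cdots, \alpha_s)}(X)$. The extra detail you supply on the openness of the HN stratum (boundedness of graded pieces plus openness of semistability plus automatic slope inequalities for fixed $\alpha_i$) is consistent with what the paper leaves implicit, so your argument is correct and matches the paper's.
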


\subsection{Comparing two construction of derived moduli spaces of HN-filtrations}

We compare two constructions in Subsection \ref{164506_16Jan22} and \ref{164538_16Jan22}. 
We need the following lemma to do it.

\begin{lem}[{\cite[Lemma 6.24]{pridham2021introduction}}]
 Let $\Phi: \mathcal{Y} \rightarrow \mathcal{Z}$ be a morphism of $n$-geometric derived stacks.
Then, $\Phi$ is a weak equivalence if and only if 
\begin{enumerate}
 \item $\Phi(C) : \mathcal{Y}(C) \rightarrow \mathcal{Z}(C)$ is a weak equivalence for any $k$-algebra $C$, and
\item for all $k$-algebra $C$, all $C$-module $N$ and all $y \in \mathcal{Y}(C)$, the maps $D^i_y(\mathcal{Y}, N) \rightarrow D^i_{\Phi x}(\mathcal{Z}, N)$ are isomorphisms for all $i > 0$. 
\end{enumerate}
\label{212143_23Jan22}
\end{lem}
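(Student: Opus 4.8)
The plan is to deduce this from the deformation-theoretic machinery for $n$-geometric derived stacks, using that such stacks are homotopy-preserving, homotopy-homogeneous, and nilcomplete (\cite{pridham2021introduction}, \cite{pridham_2012representability}). The forward implication is formal: if $\Phi$ is a weak equivalence, then $\Phi(C)\colon\mathcal{Y}(C)\to\mathcal{Z}(C)$ is a weak equivalence of simplicial sets for every $C\in\op{dg}_{\leq 0}\op{Alg}_k$, in particular for every $k$-algebra, which is (1); and for every $C$-module $M$ and every $y\in\mathcal{Y}(C)$ the induced map on homotopy fibres $T_y(\mathcal{Y}/k)(M)=\mathcal{Y}(C\oplus M)\times^h_{\mathcal{Y}(C)}\{y\}\to T_{\Phi y}(\mathcal{Z}/k)(M)$ is a weak equivalence, so on homotopy groups the maps $D^i_y(\mathcal{Y},N)\to D^i_{\Phi y}(\mathcal{Z},N)$ are isomorphisms for all $i$, hence in particular for $i>0$, which is (2).

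For the converse, assume (1) and (2). A morphism of derived stacks is a weak equivalence as soon as it is objectwise a weak equivalence on all derived affine schemes, so it suffices to show $\Phi(C)$ is a weak equivalence for every $C\in\op{dg}_{\leq 0}\op{Alg}_k$. First I would reduce to bounded $C$ by nilcompleteness, $\mathcal{Y}(C)\simeq\op{holim}_m\mathcal{Y}(\tau_{\geq -m}C)$ and likewise for $\mathcal{Z}$, and then induct on $m$. The base case $m=0$ says that $\Phi(H^0(C))$ is a weak equivalence, which is exactly (1). For the inductive step I would use that $\tau_{\geq -m}C\to\tau_{\geq -(m-1)}C$ is a square-zero extension by a shift of the discrete $H^0(C)$-module $H^{-m}(C)$, part of a homotopy-cartesian square of dg-algebras
\[
\xymatrix{
\tau_{\geq -m}C \ar[r]\ar[d] & \tau_{\geq -(m-1)}C \ar[d] \\
\tau_{\geq -(m-1)}C \ar[r] & \tau_{\geq -(m-1)}C \oplus H^{-m}(C)[m+1]
}
\]
(\cite{pridham2021introduction}). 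Since $\mathcal{Y}$ and $\mathcal{Z}$ are homotopy-homogeneous, applying them gives homotopy-cartesian squares of simplicial sets and a map between them. By the inductive hypothesis $\Phi$ is a weak equivalence on the two corners indexed by $\tau_{\geq -(m-1)}C$; and on the fourth corner it is a weak equivalence after passing, over each $y\in\mathcal{Y}(\tau_{\geq -(m-1)}C)$, to the homotopy fibre of $\mathcal{Y}(\tau_{\geq -(m-1)}C\oplus H^{-m}(C)[m+1])\to\mathcal{Y}(\tau_{\geq -(m-1)}C)$, namely $T_y(\mathcal{Y}/k)(H^{-m}(C)[m+1])$, whose homotopy groups are by the definition recalled above the groups $D^i_y(\mathcal{Y},H^{-m}(C))$ in the positive range, and are therefore preserved by (2) after a base-change identifying them with the corresponding groups over the discrete ring $H^0(C)$ to which (2) literally applies. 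Gluing the three equivalences along the homotopy-cartesian squares yields that $\Phi(\tau_{\geq -m}C)$ is a weak equivalence, completing the induction; passing to the limit over $m$ finishes the proof.

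The step I expect to be the main obstacle is precisely this inductive bookkeeping: producing the Postnikov square with the right module and shift, checking that ``homotopy-homogeneous'' is exactly strong enough to send that non-discrete pullback square of dg-algebras to a homotopy-cartesian square of spaces functorially in $\Phi$, and --- the subtle point that makes (1) and (2) not merely sufficient but minimal --- verifying both that the homotopy groups of the tangent fibre appearing at each stage land in the range $i>0$ (so that no hypothesis on $D^{\leq 0}$ is needed, the automorphism directions being already controlled by (1) on the underlying discrete stacks) and that the base-change reducing dg-algebra coefficients to the discrete coefficients for which condition (2) is stated is legitimate. Everything else is a routine assembly of standard structural properties of $n$-geometric derived stacks from \cite{pridham2021introduction} and \cite{pridham_2012representability}.
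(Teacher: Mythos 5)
The paper does not prove this lemma; it is imported verbatim as Lemma 6.24 of Pridham's lecture notes \cite{pridham2021introduction}, so there is no in-paper proof to compare against. That said, your sketch is exactly the standard argument from Pridham's framework, and it is substantially correct: the forward direction is formal; for the converse you correctly invoke that $n$-geometric derived stacks are homotopy-preserving, homotopy-homogeneous, and nilcomplete, reduce along the Postnikov tower $\mathcal{Y}(C)\simeq\op{holim}_m\mathcal{Y}(\tau_{\geq-m}C)$, and induct via the square-zero squares, with the base-change to discrete coefficients handled at the end.

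One imprecision worth flagging. You assert that the homotopy groups of the tangent fibre $T_{y'}(\mathcal{Y}/k)(H^{-m}(C)[m+1])$ appearing at stage $m$ ``land in the range $i>0$.'' They do not: by the recalled definition $\pi_i$ of that fibre is $\op{D}^{m+1-i}_{y'}(\mathcal{Y},H^{-m}(C))$ for $i\geq 0$, and as $i$ ranges over all non-negative integers the upper index $j=m+1-i$ sweeps through $j\leq m+1$, so one genuinely needs isomorphisms on $\op{D}^j$ for $j\leq 0$ as well, not only for $j>0$. Your parenthetical does then correctly observe that this missing range is supplied by condition (1) --- for discrete $C$ and discrete $N$, $T_y(\mathcal{Y}/k)(N)$ is a homotopy fibre of a map of spaces that (1) makes a weak equivalence, giving the isomorphisms on $\op{D}^j$ for $j\leq 0$ --- so the argument as a whole closes up. But the sentence ``so that no hypothesis on $D^{\leq 0}$ is needed'' is at odds with the rest of the parenthetical; the clean statement is that the $j\leq 0$ isomorphisms are \emph{deduced from} (1) rather than \emph{unnecessary}, and the two hypotheses together control all of $\op{D}^j$. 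With that sentence reworded the sketch is a faithful reconstruction of Pridham's proof.
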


\begin{thm}
The derived stacks $[\tilde{U}/P]$ in Definition \ref{165325_16Jan22} and $(\bar{W} \circ N)\widebreve{\mathcal{FM}od}_{\text{coh}, (\alpha_1, \cdots, \alpha_s)}^{\text{HN}}(O(X \times_k -))_{\text{flat}}$ in Proposition \ref{165404_16Jan22} are weakly equivalent.
\label{232359_30Jan22}
\end{thm}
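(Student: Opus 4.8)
The plan is to verify the two hypotheses of Lemma \ref{212143_23Jan22} for the natural comparison morphism between the two derived stacks. First I would construct the morphism itself: composing the equivalence of Proposition \ref{163437_16Jan22} (identifying $[\mathbb{R}\op{Spec}(B_-)/P]$ with the functor $(\bar W\circ N)(\underline{\mathcal{F}_-})$) with the restriction to the open $P$-invariant subscheme $\tilde U$, we get $(\bar W\circ N)(\underline{\mathcal{F}_{(\alpha_1,\cdots,\alpha_s)}^{\op{HN}}})$, and I would produce a map from this to $(\bar W\circ N)\widebreve{\mathcal{FM}od}_{\text{coh},(\alpha_1,\cdots,\alpha_s)}^{\text{HN}}(O(X\times_k-))_{\text{flat}}$. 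The natural candidate is assembled from the functors of Section 4.1: $\Gamma^{\op{fil}}_{[p,q]}$ (and its quasi-inverse $\mathscr{S}$ on the relevant open locus, furnished by Theorem \ref{150320_6Jul21}) relate filtered graded $A$-modules to filtered coherent sheaves on $X$, and the equivalence \ref{164301_15Jan22} together with the cosimplicial \v{C}ech model of Example \ref{173031_12Jan22} relates filtered coherent sheaves to filtered cosimplicial cartesian $O(X\times_k-)$-modules; the derived/$A_\infty$ enhancement of these functors (promised as Theorem \ref{232359_30Jan22} in the introduction — i.e., this very statement) is what makes the comparison a morphism of derived stacks rather than just of classical stacks.

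Next I would check condition (1) of Lemma \ref{212143_23Jan22}, namely that the map is an equivalence on $C$-points for every $k$-algebra $C$ (equivalently, an equivalence of the underlying classical stacks together with compatible deformation theory). On $\pi_0$ this is exactly Remark \ref{212436_23Jan22}: both sides have $\pi_0$ equivalent to $\mathcal{FC}\op{oh}^{\op{HN}}_{(\alpha_1,\cdots,\alpha_s)}(X)$, using part (a) of Theorem \ref{160827_17Jan22} on the one hand and the construction of $\mathcal{FM}od_{\text{coh},(\alpha_1,\cdots,\alpha_s)}^{\text{HN}}$ on the other. For higher homotopy, I would argue that both are truncation-compatible with the classical data and that the higher simplicial structure is recovered from the same Deligne-groupoid/Maurer-Cartan package, so an equivalence on objects and on all homotopy groups of mapping spaces follows from identifying the tangent data, which is condition (2).

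For condition (2) — the tangent complexes agree in all positive degrees — I would compute both sides in terms of $\op{Ext}$. On the dg-Lie side, part (b) of Theorem \ref{160827_17Jan22}, Corollary \ref{160735_17Jan22}, and Remark \ref{212436_23Jan22} already give $\op{D}^i_{\Gamma^{\op{fil}}_{[p,q]}(\mathscr{F})}([\tilde U/P],k)\simeq \op{Ext}^i_-(\mathscr{F},\mathscr{F})$, and more generally the tangent functor evaluated on a $C$-module $N$ is governed by $\op{Ext}^\bullet_{A\op{-gr},-}$ of the filtered graded module, computed via the filtered projective resolutions of Lemma \ref{165514_26Jun21} and Proposition \ref{173918_1Jul21}. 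On the Di Natale side, the tangent complex at a filtered sheaf is (by the cotangent-complex computation underlying \cite[Cor 3.32]{di2017derived}) the filtered $\op{Ext}$-complex $\op{Ext}^\bullet_-(\mathscr{F},\mathscr{F}\otimes_C N)$ on $X\times_k\op{Spec}(C)$; the filtered spectral sequence of Theorem \ref{161710_25Jun21} reduces this to the graded pieces, which match the associated-graded $\op{Ext}$'s appearing in the limit formula of Lemma \ref{173516_29Sep21}(a). Thus $\op{D}^i$ on both sides is the same filtered $\op{Ext}^i_-$, naturally in $C$ and $N$. I expect the main obstacle to be precisely this last matching \emph{as derived, not merely cohomological, objects}: one must check that the dg-Lie-algebra differential $d^\mu$ on $L_{\geq p,-}$ and the differential on Di Natale's filtered cosimplicial model correspond to the \emph{same} $A_\infty$/$L_\infty$-structure under the functors $\Gamma^{\op{fil}}_{[p,q]}$ and \v{C}ech resolution — in particular that passing to the inductive limit over $p$ (needed to pass from finite-degree graded modules to genuine sheaves, cf. Lemma \ref{173516_29Sep21} and the definition $L_-=\varinjlim_p L_{\geq p,-}$) commutes with the formation of tangent complexes and preserves the homotopy type. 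Controlling this limit, and checking the higher Massey-type products agree and not just the $\op{Ext}$ groups, is where the real work lies; everything else is assembling already-established equivalences through Lemma \ref{212143_23Jan22}.
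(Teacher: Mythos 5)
Your overall strategy — reduce via Proposition \ref{163437_16Jan22} to comparing $(\bar W\circ N)(\underline{\mathcal{F}^{\op{HN}}_{(\alpha_1,\cdots,\alpha_s)}})$ with $(\bar W\circ N)\widebreve{\mathcal{FM}od}^{\text{HN}}_{\text{coh},(\alpha_1,\cdots,\alpha_s)}(O(X\times_k-))_{\text{flat}}$, then apply Lemma \ref{212143_23Jan22} by checking its two conditions — is exactly the paper's route, and your identification of the tangent data with filtered $\op{Ext}$ on both sides is the right target for condition (2). The problem is that you never actually construct the comparison morphism, and you essentially concede this: you write that the ``derived/$A_\infty$ enhancement of these functors \ldots\ is what makes the comparison a morphism of derived stacks,'' while noting that enhancement is the very content of the theorem. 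That is circular. Lemma \ref{212143_23Jan22} takes a morphism $\Phi$ as input; with no $\Phi$ there is nothing to verify conditions (1) and (2) against. The paper fills this gap concretely in Steps 1--2: an object of $\mathcal{F}^{\op{HN}}_{(\alpha_1,\cdots,\alpha_s)}(C)$ is unwound (via the Deligne groupoid and strict \'etale descent) to a filtered graded locally free $C^0$-module $M$ equipped with a filtered unital $C$-linear $A_\infty$-action of $A\otimes_k C$; the $A_\infty$-action is strictified by passing to the Bar resolution $\op{Bar}_{A\otimes_k C}(M\otimes_{C^0}C)$; this dg-module is sheafified to a dg-sheaf $\tilde{\mathscr S}(M\otimes_{C^0}C)$ on $X\times\op{Spec}(C^0)$; and finally global sections land this in $\op{dgFMod}(O(X\times_k\op{Spec}(C))^\bullet)$. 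That chain is the morphism $\Psi$ you need; without it your argument does not get off the ground.

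A secondary gap is in how you close condition (2). You correctly observe that both sides should compute the same filtered $\op{Ext}^i_-(\mathscr{F},\mathscr{F}\otimes^{\mathbb L}_C N)$, and you raise (legitimate, but not in the end necessary) worries about matching $A_\infty/L_\infty$-structures and commuting with the colimit over $p$. The paper's actual mechanism is simpler and you should use it: pick a filtered locally free resolution $\mathscr E^\bullet\to\mathscr F\to 0$; on the Di Natale side, $D^i_{\Psi(M)}$ may be computed using a \emph{flat} resolution rather than a cofibrant one (deformation of functors, as in \cite[Prop 3.4]{shulman2009homotopy}); and the equivalence \ref{164301_15Jan22} carries $\mathscr E^\bullet$ to exactly such a flat cosimplicial resolution. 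That single observation identifies the two $D^i$'s naturally in $C$ and $N$, and sidesteps the harder structural comparison you were anticipating. Also, your use of Remark \ref{212436_23Jan22} for condition (1) only addresses $\pi_0$; since Lemma \ref{212143_23Jan22}(1) asks for a weak equivalence on all discrete $C$-points (so $\pi_0$ and $\pi_1$ of a $1$-stack), you should cite Theorem \ref{150320_6Jul21} alongside it, as the paper does, to control the automorphism groupoids and not just isomorphism classes.
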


\begin{proof}
\item
\subsubsection*{Step 1}
 From Proposition \ref{163437_16Jan22}, it is enough to prove $(\bar{W} \circ  N) (\underline{\mathcal{F}_{(\alpha_1, \cdots, \alpha_s)}^\op{HN}})$ and $(\bar{W} \circ N) \widebreve{\mathcal{FM}od}_{\text{coh}, (\alpha_1, \cdots, \alpha_s)}^{\text{HN}}$ $(O(X \times_k -))_{\text{flat}}$ are weakly equivalent.
To construct a morphism between them, we construct a morphism from $\underline{\mathcal{F}_{(\alpha_1, \cdots, \alpha_s)}^\op{HN}}$ to $\widebreve{\mathcal{FM}od}_{\text{coh}, (\alpha_1, \cdots, \alpha_s)}^{\text{HN}}(O(X \times_k -)z)_{\text{flat}}$.
For all $C \in \op{dg}_{\leq 0}\op{Nil}_{k}^{\flat}$, objects of $\mathcal{F}_{(\alpha_1, \cdots, \alpha_s)}^\op{HN}(C)$ are given by the triples $(C \rightarrow C', \omega ,  g)$, where 
\begin{enumerate}
 \item $C \rightarrow C':$ a strict \'{e}tale covering (\cite[Definition 2.17]{pridham2013constructing}),
\item $\omega$ is an object in $\mathcal{F}_{(\alpha_1, \cdots, \alpha_s)}^\op{HN}(C')$,  %$[\op{MC}(L_- \otimes_k C_i)/ P \otimes_k C_i^0]_{\op{act}}$, 
\item $g \in P \otimes_k (C'^0 \otimes_{C^0} C'^0)$ which satisfies $g \cdot \op{pr}_1^*\omega = \op{pr}_0^*\omega $ and the cocycle condtition $\op{pr}_{01}^*g \circ \op{pr}_{12}^*g = \op{pr}_{02}^*g$.%, where $C_{ij} := C_i^0 \otimes_{C^0} C_j^0 \otimes_{C^0} C $ and $C_{ijk} := C_i^0 \otimes_{C^0} C_j^0 \otimes_{C^0} C_k^0 \otimes_{C^0} C$.
\end{enumerate}
In other words, an object in $\mathcal{F}_{(\alpha_1, \cdots, \alpha_s)}^\op{HN}(C)$ is a filtered graded locally free $C^0$-module $M$ with a filtered graded unital $C$-linear $A_{\infty}$-action $\mu$ of $A \otimes_k C$ on $M \otimes_{C^0} C $ such that $H^0(M \otimes_{C^0} C)$ is an object  in $\mathcal{FC}\op{oh}^{\op{HN}}_{(\alpha_1, \cdots, \alpha_s)}(X)$ (modulo truncation as graded modules).

\subsubsection*{Step 2}

The condition $M$ has an $A_{\infty}$-action of $A \otimes_k C$ induces a dg-action $\mu'$ of $A \otimes_k C$ on $\op{Bar}_{A \otimes_k C} (M \otimes_{C^0} C)$ (\cite[Proposition 3.4.9]{ciocan2001}).
%%%

 We define a dg sheaf $\tilde{\mathscr{S}}(M \otimes_{C^0}C)$ on $(X \times \op{Spec}(C^0), \mathcal{R}_C)$ to be a chain complex of coherent sheaves $\widetilde{\op{Bar}_{A \otimes_k C} (M \otimes_{C^0} C)}$, which has a structure of a dg sheaf on $(X \times \op{Spec}(C^0), \mathcal{R}_C)$, where $\mathcal{R}_C := \bigoplus_i (\widetilde{A \otimes C_i})$ is a sheaf of dg-algebras on $X \times \op{Spec}(C^0)$.
Note that $\widetilde{\op{Bar}_{A \otimes_k C} (M \otimes_{C^0} C)}$ is independent of the choice of $M$.

$\mathcal{H}^0(\tilde{\mathscr{S}}(M \otimes_{C^0}C) \otimes \mathscr{O}_{X \times_k \op{Spec}(H^0(C))})$ is an object in $\mathcal{FC}\op{oh}^{\op{HN}}_{(\alpha_1, \cdots, \alpha_s)}(X)$ from the construction.
Here, $\mathcal{H}^0$ is the functor taking $0$-th cohomology of cochain complexes of sheaves.
We also have $\mathcal{H}^0(\tilde{\mathscr{S}}(M \otimes_{C^0}C) \otimes \mathscr{O}_{X \times_k \op{Spec}(H^0(C))})$ is flat over $\op{Spec}(H^0(C))$ because of the locally freeness of $M$.

Then, we think of $\tilde{\mathscr{S}}(M \otimes_{C^0}C)$ as an object in $\op{dgFMod}(O(X \times_k \op{Spec}(C))^\bullet)$ by taking the global section functor.
Thus, from the above discussion, we have a morphism of simplicial category-valued functors from $\underline{\mathcal{F}_{(\alpha_1, \cdots, \alpha_s)}^\op{HN}}$ to $\widebreve{\mathcal{FM}od}_{\text{coh}, (\alpha_1, \cdots, \alpha_s)}^{\text{HN}}(O(X \times_k -))_{\text{flat}}$.

%We consider the natural morphism $i : \underline{\mathcal{F}_{(\alpha_1, \cdots, \alpha_s)}^\op{HN}} \rightarrow \underline{\mathcal{F}_{(\alpha_1, \cdots, \alpha_s)}^\op{HN,un}}$ obtained from the truncation and composite this with the above.

After that, we take the functor $\bar{W} \circ N$ and obtain a functor between derived stacks
\begin{equation}
 \Psi : (\bar{W} \circ  N) (\underline{\mathcal{F}_{(\alpha_1, \cdots, \alpha_s)}^\op{HN}}) \rightarrow (\bar{W} \circ N) \widebreve{\mathcal{FM}od}_{\text{coh}, (\alpha_1, \cdots, \alpha_s)}^{\text{HN}}(O(X \times_k -))_{\text{flat}}.
\end{equation}

\subsubsection*{Step 3}

Next, we see that $\Psi$ satisfies the conditions of Lemma \ref{212143_23Jan22}.
We have $\Psi$ satisfies the condition $(1)$ of the lemma from Remark \ref{212436_23Jan22} and Theorem \ref{150320_6Jul21}.
As for the condition (2), for any object $M \in (\bar{W} \circ  N) (\underline{\mathcal{F}_{(\alpha_1, \cdots, \alpha_s)}^\op{HN}})(C)$, we have an object $\mathscr{F} \in \mathcal{FC}\op{oh}^{\op{HN}}_{(\alpha_1, \cdots, \alpha_s)}(X)$ such that $M \simeq \Gamma^{\text{fil}}_{[p,q]}(\mathscr{F})$ and $\op{Ext}^i_{-}(\mathscr{F}, \mathscr{F} \otimes^{\mathbb{L}}_C N) \simeq D^i_{M}(\bar{W} \circ  N) (\underline{\mathcal{F}_{(\alpha_1, \cdots, \alpha_s)}^\op{HN}}),N)$.
And, for any $\mathscr{F} \in \mathcal{FC}\op{oh}^{\op{HN}}_{(\alpha_1, \cdots, \alpha_s)}(X)$, we can take a filtered locally free resolution $\mathscr{E}^\bullet \rightarrow \mathscr{F} \rightarrow 0$ and calculate $\op{Ext}^i_{-}(\mathscr{F}, \mathscr{F} \otimes^{\mathbb{L}}_C N)$ by this resolution.
Moreover, in order to calculate $D^i_{\Psi(M)}((\bar{W} \circ N) \widebreve{\mathcal{FM}od}_{\text{coh}, (\alpha_1, \cdots, \alpha_s)}^{\text{HN}}(O(X \times_k -))_{\text{flat}},N)$, we can take a flat resolution of $\Psi(M)$ instead of a cofibrant resolution (this is because derived functors are calculated by their deformations. in detail, see \cite[Prop 3.4]{shulman2009homotopy})). 
Thus, we have the condition (2) from the equivalence \ref{164301_15Jan22} and the fact that $\mathscr{E}^\bullet \rightarrow \mathscr{F} \rightarrow 0$ is mapped to a flat resolution by this equivalence.
Therefore, $\Psi$ is a weak equivalence and this completes our proof.
\end{proof}

\begin{rmk}
 We can carry the same argument in the case of derived moduli stacks of (non-filtered) semistable sheaves: first, we translate the dg-stacks of semistable sheaves constructed in \cite{behrend2014} into derived stacks in the sense of To\"{e}n, on the other hand, we construct derived moduli stacks of semistable sheaves by using techniques of Pridham and Di Natale, finally, we can show weak equivalence between them.
\end{rmk}

\begin{egg}
\label{132127_10Feb22}
We provide a simple example of Lagrangian morphisms related to derived moduli stacks of HN-filtrations.
Let $X$ be a projective Calabi-Yau variety of dimension $d$.
Then, the derived stack $\mathcal{RC}oh(X)$ of coherent sheaves on $X$ is equipped with a $(2-d)$-shifted symplectic structure (for example, see \cite[Section 5.3]{toen2014derived}).
We also have the derived moduli of filtered coherent sheaves on $X$ is weakly equivarent to the derived moduli of sequences of morphisms in the dg-category $\op{Coh}_{dg}(X)$ of coherent sheaves on $X$ (cf. \cite[Remark 3.6]{di2019global}).
Note that derived moduli of sequences of morphisms of coherent sheaves are defined as in the case of filtered sheaves by using a model structure on the category of morphisms in a stable model category which is constructed in \cite{gwilliam2018enhancing}.
Moreover, we can show that the derived moduli of sequences of morphisms of length 1 in $\op{Coh}_{dg}(X)$ is weakly equivalent to the derived moduli of morphisms in $\op{Coh}_{dg}(X)$ which is defined in \cite[Definition 3.18]{toen2007moduli} by comparing their tangent complexes.
On the other hand, for any dg-category $T$ over $k$ which is smooth, proper and equipped with an orientation of dimension $d$, the derived moduli stack $\mathcal{M}^{(1)}_T$ of morphisms in $T$ has a correspondence 
\[
 \mathcal{M}_T \times \mathcal{M}_T \overset{s,c}{\longleftarrow} \mathcal{M}^{(1)}_T \overset{t}{\longrightarrow} \mathcal{M}_T,
\]
,where $s,c,t$ send morphisms in $T$ to their sources, cones and targets respectively.
This induces a Lagrangian structure with respect to the $(2-d)$-shifted symplectic structure on $\mathcal{M}^3_T$ (cf. \cite[Section 5.3]{toen2014derived}, \cite[Corollary 6.5]{brav2021relative}).
So, considering the above argument and that derived moduli of HN-filtrations are open substacks of derived moduli of filtered sheaves, we have morphisms from the derived moduli stacks of length 2 HN-filtrations to $\mathcal{RC}oh(X)^3$ which carries Lagrangian structures with respect to the $(2-d)$-shifted symplectic sturucture on $\mathcal{RC}oh(X)^3$.
This gives a symplectic geometrical interpretation of moduli of unstable sheaves on $X$.
\end{egg}

%\begin{small}
\subsection*{Acknowledgements}
The auther would like to thank his adviser Professor Hajime Kaji for his constant support.
He is also grateful to Professor Ryo Ohkawa, Professor Isamu Iwanari for useful conversations.
%He thank Professor Masahiro Futaki, Professor Isamu Iwanari, Professor Kotaro Kawatani, Professor Hiroyuki Minamoto and Professor Seidai Yasuda for helpful comments.
Finally, he is so thankful to the referee for pointing out mistakes of the manuscript.
This work is supported by Grant-in-Aid for JSPS Fellows (Grant Number 22J11405)

%\subsubsection*{Acknowledgements}
%\end{small}

\printbibliography

%\bibliographystyle{amsplain}
%\bibliography{mizuno_paper2}

\end{document}